\renewcommand\MR[1]{\relax} 
\newtheorem{thm}{Theorem}[section] 
    \newtheorem{thmx}{Theorem}
\numberwithin{equation}{section}
\newtheorem{cor}[thm]{Corollary}
\newtheorem{corollary}[thm]{Corollary}
\newtheorem{lemma}[thm]{Lemma}
\newtheorem{prop}[thm]{Proposition}
\theoremstyle{definition}
\newtheorem{definition}[thm]{Definition}
\newtheorem{claim}{Claim}
\newtheorem*{claim*}{Claim}
\theoremstyle{remark}
\newtheorem{remark}[thm]{Remark}
\newtheorem{example}[thm]{Example}
\newtheorem{question}[thm]{Question}
\def\mathcs{C^{*}}
\newcommand{\cs}{\ensuremath{\mathcs}}
\def\Cst{\cs}
\DeclareMathSymbol{\rtimes}{\mathbin}{AMSb}{"6F}
\newcommand\C{\mathbf{C}}
\newcommand\T{\mathbf{T}}
\newcommand\Z{\mathbf{Z}}
\newcommand\Q{\mathbf{Q}}
\newcommand\N{\mathbf{N}}
\newcommand\set[1]{\{\,#1\,\}}
\newcommand\sset[1]{\{#1\}}
\def\restr#1{|_{{#1}}}
\def\labelenumi{\textnormal{(\@alph\c@enumi)}}
\def\theenumi{\@alph \c@enumi}
\def\labelenumii{\textnormal{(\@roman\c@enumii)}}
\def\theenumii{\@roman \c@enumii}
\def\alphapart#1{\charno=96
\advance\charno by#1\char\charno}
 \def\<{\langle}
 \def\>{\rangle}
\let\ipscriptstyle=\scriptscriptstyle
\def\lipsqueeze{{\mskip -3.0mu}}
\def\ripsqueeze{{\mskip -3.0mu}}
\def\ipcomma{\nobreak\mathrel{,}\nobreak}
\newbox\ipstrutbox
\def\ipstrut{\copy\ipstrutbox}
\def\lip#1<#2,#3>{\mathopen{\relax_{\ipstrut\ipscriptstyle{
#1}}\lipsqueeze
\langle} #2\ipcomma #3 \rangle}
\def\blip#1<#2,#3>{\mathopen{\relax_{\ipstrut
\ipscriptstyle{ #1}}\lipsqueeze\bigl\langle} #2\ipcomma #3 \bigr\rangle}
\def\rip#1<#2,#3>{\langle #2\ipcomma #3
\rangle_{\ripsqueeze\ipstrut\ipscriptstyle{#1}}}
\def\brip#1<#2,#3>{\bigl\langle #2\ipcomma #3
\bigr\rangle_{\ripsqueeze\ipstrut\ipscriptstyle{#1}}}
\def\angsqueeze{\mskip -6mu}
\def\smangsqueeze{\mskip -3.7mu}
\def\trip#1<#2,#3>{\langle\smangsqueeze\langle #2\ipcomma #3
\rangle\smangsqueeze\rangle_{\ripsqueeze\ipstrut\ipscriptstyle{#1}}}
\def\btrip#1<#2,#3>{\bigl\langle\angsqueeze\bigl\langle #2\ipcomma
#3
\bigr\rangle
\angsqueeze\bigr\rangle_{\ripsqueeze\ipstrut\ipscriptstyle{#1}}}
\def\tlip#1<#2,#3>{\mathopen{\relax_{\ipstrut\ipscriptstyle{
#1}}\lipsqueeze \langle\smangsqueeze\langle} #2\ipcomma #3
\rangle\smangsqueeze\rangle}
\def\btlip#1<#2,#3>{\mathopen{\relax_{\ipstrut\ipscriptstyle{
#1}}\lipsqueeze
\bigl\langle\angsqueeze\bigl\langle} #2\ipcomma #3
\bigr\rangle\angsqueeze\bigr\rangle}
\def\ip(#1|#2){(#1\mid #2)}
\def\bip(#1|#2){\bigl(#1 \mid #2\bigr)}
\def\Bip(#1|#2){\Bigl( #1 \bigm| #2 \Bigr)}
    \newcommand{\repeatable}[2]{%
        \label{#1}\global\@namedef{repeatable@#1}{#2}#2%
    }
    \newcommand{\txtrepeat}[1]{%
        \@ifundefined{repeatable@#1}{NOT FOUND}{\@nameuse{repeatable@#1}}%
    }
    \newcommand{\eqrepeat}[1]{%
        \@ifundefined{repeatable@#1}{NOT FOUND}{\begin{align*}\@nameuse{repeatable@#1}\tag{\ref{#1}}\end{align*}}%
    }
    \newcommand{\eqrepeatnn}[1]{%
        \@ifundefined{repeatable@#1}{NOT FOUND}{\begin{align*}\@nameuse{repeatable@#1}\end{align*}}%
    }
\newcommand\z{^{(0)}}
\newcommand\go{G\z}
\newcommand\comp{^{(2)}}
\newcommand\inv{^{-1}}
\newcommand{\Iso}[1]{{#1'}}
\newcommand{\Int}[2][]{\operatorname{int}_{#1}\left(#2\right)}
\DeclareMathOperator{\supp}{supp}
\newcommand\suppo{\supp^{\circ}}
\newcommand\E{\mathcal{E}}
\newcommand\s{\mathfrak{s}}
\newcommand\norm[1]{\|#1\|}
\newcommand\abs[1]{|#1|}
\newcommand{\etale}{{\'e}tale}
\newcommand{\LCH}{locally compact, Hausdorff}
\begin{document}

\begin{abstract}
 Well-known
 work of Renault shows that if $\E$ is a twist over a 
 second countable, effective, \etale\ groupoid $G$, then there is
  a naturally associated Cartan subalgebra of the reduced twisted
  groupoid \cs-algebra $\cs_{r}(G;\E)$, and that every Cartan
  subalgebra of 
  a
  separable
  \cs-algebra arises in this way.  However
 twisted $\cs$-algebras of non-effective groupoids $G$ can also possess Cartan subalgebras: In  \cite{DGNRW:Cartan}, sufficient conditions on a subgroupoid $S$ of $G$ were found that ensure that $S$ gives rise to a Cartan subalgebra in the cocycle-twisted $\cs$-algebra of $G$. In this paper, 
  we extend these results to general twists $\E$, and we refine the conditions on the subgroupoid 
  for  $\cs_{r}(S;\E_S)$ to be a Cartan subalgebra of $\cs_{r}(G;\E)$. 
\end{abstract}

\thanks{Anna Duwenig was supported by a RITA Investigator 
grant (IV017) of the University of Wollongong, by Methusalem grant
  METH/21/03---long term structural funding of the Flemish Government,
  and by an FWO Senior Postdoctoral Fellowship (1206124N). She would 
  like to thank Diego Mart\'inez and her WOA1 and WOA3 groups for helpful discussions, and she is particularly grateful to  Ying-Fen Lin
  and Adam Fuller whose questions helped to find gaps in earlier versions%
  }

\title{Non-traditional Cartan subalgebras in twisted groupoid \cs-algebras}

\author[A. Duwenig]{Anna Duwenig}
\address{KU Leuven, Department of Mathematics, Leuven (Belgium)}
\email{anna.duwenig@kuleuven.be}

\author[Williams]{Dana P. Williams}
\address{Department of Mathematics\\ Dartmouth College \\ Hanover, NH
  03755-3551 USA}
\email{dana.williams@Dartmouth.edu}

\author[J. Zimmerman]{Joel Zimmerman}
\address{School
  of Mathematics and Applied Statistics, University of Wollongong,
  Wollongong, NSW 2522, Australia}
\email{joelz@uow.edu.au}

\maketitle


\section{Introduction}

\label{sec:introduction}

Cartan subalgebras have been an active area of research in operator
algebras since the work of Feldman and Moore \cite{FeldmanMoore2} who
realized von Neumann algebras with Cartan subalgebras as the von
Neumann algebras of Borel equivalence relations twisted by Borel
$2$-cocycles.  Generalizing the Feldman--Moore Theorem to the setting
of \cs-algebras was one of the impetuses for studying the \cs-algebras
of locally compact groupoids.  Unfortunately, the basic theory only
accommodates continuous $2$-cocycles since the groupoid extension
determined by a Borel $2$-cocycle need not have a compatible locally
compact topology as is the case for locally compact groups.  A very
useful solution to this dilemma was provided by Kumjian in
\cite{Kum:Diags} where he suggested dispensing with explicit mention
of the $2$-cocycle on a groupoid $G$ and to instead work with a
groupoid extension of~$G$ which is a natural groupoid analogue of
central group extensions of the circle $\T$.  He called these
extensions \emph{twists} over $G$.  The term $\T$-groupoid is also
used.  Then given a twist $\E$ over $G$, Kumjian showed how to
construct a \cs-algebra $\cs(G;\E)$, and its reduced counterpart
$\cs_{r}(G;\E)$, which are analogues of the classical group
\cs-algebras twisted by a
$2$-cocycle.

There has been a great deal of work invested in the study of these
twisted groupoid \cs-algebras.  In particular, in
\cite{ren:irms08} Renault established a correspondence between
separable \cs-algebras with Cartan subalgebras and twists over 
\LCH,
second countable, effective, \etale\ groupoids.\footnote{It was later shown that the same result holds in the non-separable / non-second-countable case. See for example  \cite{Raad:2022:Renault}*{Theorem 1.2} or \cite{KM:2020:NCCartans}*{Corollary 7.6}.}  But there are twists over
\emph{non-effective} groupoids whose twisted \cs-algebras possess
Cartan subalgebras. Such examples can be constructed using Theorem~3.1
of \cite{DGNRW:Cartan} for twists 
that are
determined by a continuous $2$-cocycle.

In this paper, we generalize \cite{DGNRW:Cartan}*{Theorem~3.1} to all twists, and we refine the sufficient conditions
 conditions on $S$ for $\Cst_{r}(S;\E_S)$ to be a Cartan subalgebra. %
Our main result (Theorem~\ref{thm:May17}) implies the following:
\begin{thmx}\label{thmA}
   Suppose 
  $\E$ is a twist 
  over a \LCH, \etale\ groupoid~$G$, and 
  $S$
  is an open subgroupoid of $G$. 
  Then the following statements  are equivalent.
  \begin{enumerate}[label=\textup{(\roman*)}]
    
    \item $i(\Cst_{r}(S;\E_{S}))$ is a Cartan subalgebra of $\Cst_{r}(G;\E)$ whose normalizer contains every $h\in C_{c}(G;\E)$ with  $\pi(\suppo(h))$ a bisection.
    
    \item  $S$ is maximal among open subgroupoids of $\Int[G]{\Iso{G}}$ for which
      $\E_{S}$ is abelian, $S$ is closed and normal in $G$, and the set
      \begin{equation}
        \bigl\{e\in\Iso{\E} :  1 < |\{\sigma\inv e\sigma:\sigma \in \E_{S}\}|= |\{s\inv \pi(e)s:s \in S\}|<\infty\bigr\}
        \end{equation}
        has empty interior in $\E$.
  \end{enumerate}
\end{thmx}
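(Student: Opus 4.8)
The plan is to deduce this theorem from the main result, Theorem~\ref{thm:May17}, by showing that the two conditions appearing here are each equivalent to whatever hypothesis/conclusion pair appears in that theorem; since I do not yet have Theorem~\ref{thm:May17} in front of me, I will instead describe the self-contained argument one would give, and indicate where the main theorem is invoked. The heart of the matter is the standard dictionary between structural properties of the pair $(S,\E_S)\subseteq(G,\E)$ and the property that $i(\Cst_r(S;\E_S))$ sits inside $\Cst_r(G;\E)$ as a Cartan subalgebra in the sense of Renault: it must be a MASA, it must be the image of a faithful conditional expectation, and its normalizer must generate. I would organize the proof around these three requirements and handle the ``non-traditional'' normalizer condition (that every $h\in C_c(G;\E)$ with $\pi(\suppo(h))$ a bisection normalizes) separately, since that is the clause that forces $S$ to be large.

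\smallskip

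\textbf{Step 1: (ii) $\Rightarrow$ (i).} Assume $S$ is maximal among open subgroupoids of $\Int[G]{\Iso{G}}$ with $\E_S$ abelian, that $S$ is closed and normal in $G$, and that the exceptional set in~(1) has empty interior. First I would verify that $\E_S$ abelian together with $S\subseteq\Int[G]{\Iso{G}}$ forces $i(\Cst_r(S;\E_S))$ to be abelian and to embed as a subalgebra; closedness of $S$ gives a well-defined restriction map $C_c(G;\E)\to C_c(S;\E_S)$ and hence a conditional expectation $P$ onto the copy of $\Cst_r(S;\E_S)$, which is faithful by the usual argument using the reduced norm (positivity on the diagonal of the groupoid). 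Normality of $S$ in $G$ is what makes every $h$ supported on a bisection-covering set a normalizer: for such $h$, conjugation by $h$ moves $\Cst_r(S;\E_S)$ into itself precisely because $gSg\inv\subseteq S$ along the bisection. The content of the empty-interior hypothesis on~(1) is to rule out the ``small finite orbit'' obstruction to $i(\Cst_r(S;\E_S))$ being \emph{maximal} abelian: if that set had nonempty interior, one could build an element of $C_c(G;\E)$ commuting with the diagonal but not lying in it, exactly as in \cite{DGNRW:Cartan}. Maximality of $S$ among such subgroupoids then upgrades ``abelian'' to ``maximal abelian.'' Packaging these facts is what Theorem~\ref{thm:May17} does, so (i) follows.

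\smallskip

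\textbf{Step 2: (i) $\Rightarrow$ (ii).} Conversely, assume $i(\Cst_r(S;\E_S))$ is Cartan with the stated normalizer property. Being abelian forces $\E_S$ abelian; being contained in a Cartan (hence in a MASA) forces $S\subseteq\Int[G]{\Iso{G}}$, since any element of $G\setminus\Iso{G}$, or in the non-interior part of $\Iso G$, would produce a normalizer of the diagonal not commuting with it. The existence of a faithful conditional expectation compatible with the groupoid structure forces $S$ to be closed. The normalizer hypothesis — that \emph{every} $h$ over a bisection normalizes — is exactly a reformulation of normality of $S$: testing against $h$ supported near an arbitrary $g\in G$ shows $gSg\inv\subseteq S$. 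Maximality of $S$ among open abelian subgroupoids of $\Int[G]{\Iso G}$ follows from maximality of the abelian subalgebra: a strictly larger such $S'$ would give a strictly larger abelian subalgebra inside $\Cst_r(G;\E)$, contradicting the MASA property. Finally, the empty-interior condition on~(1) is forced because nonempty interior there would, again by the \cite{DGNRW:Cartan} construction, produce a genuine element of the commutant of $i(\Cst_r(S;\E_S))$ outside of it, contradicting maximality. Assembling these implications is again precisely the content of Theorem~\ref{thm:May17}.

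\smallskip

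\textbf{Main obstacle.} The delicate point, and the one I expect to consume most of the work, is the analysis of condition~(1): translating ``the set of twisted isotropy elements with nontrivial but finite $\E_S$-conjugacy orbit, of size matching the $S$-orbit, has empty interior'' into the C*-algebraic statement that no nonzero continuous section supported there can commute with the diagonal. One has to be careful that the \emph{size} of the orbit in $\E_S$ agrees with the size in $S$ — i.e.\ that the twist does not collapse or enlarge orbits — which is why the equality $|\{\sigma\inv e\sigma\}| = |\{s\inv\pi(e)s\}|$ appears rather than just finiteness downstairs; handling the twist correctly here (as opposed to the cocycle case of \cite{DGNRW:Cartan}) is the new technical ingredient, and it is presumably where Theorem~\ref{thm:May17} does its real work. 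Everything else — closedness $\Leftrightarrow$ existence of the expectation, normality $\Leftrightarrow$ the normalizer clause, maximality of $S$ $\Leftrightarrow$ maximality of the subalgebra — is a routine transcription once the correct framework from the body of the paper is in place.
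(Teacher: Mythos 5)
Your reduction is exactly what the paper does: Theorem~A is stated there as precisely the equivalence \ref{it:thm:May17:B:subset}$\iff$\ref{it:thm:May17:S:Omega+max} of Theorem~\ref{thm:May17}, and your dictionary (closedness $\leftrightarrow$ the conditional expectation of Proposition~\ref{prop:CondExp}; normality $\leftrightarrow$ the bisection-normalizer clause via Lemma~\ref{lem:Bisections are normalisers} and the computation $j_{G}(nfn^{*})(e\sigma e\inv)=|j_G(n)(e)|^2f(\sigma)$; maximality of $S$ plus the empty-interior condition $\leftrightarrow$ maximal abelianness via Corollary~\ref{cor:S max in terms of Omega} and Proposition~\ref{prop:max_new}) matches the paper's decomposition item for item. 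Two small points: your justification that the MASA property forces $S\subset\Int[G]{\Iso{G}}$ via ``normalizers not commuting'' is off --- the correct (and easier) route is that abelianness of $B$ forces $\E_S$ abelian, hence $\E_S\subset\Iso{\E}$, and openness of $S$ does the rest; and you are right that the genuine content is hidden in the analysis of the conjugacy-class condition, which in the paper is Lemma~\ref{lem:fancy lemma} (a lower-semicontinuity and Baire-category argument producing the disjoint bisections $V_0,\dots,V_n$ and equivariant identifications $\Psi_i$, from which one builds an element of the commutant outside $B$); your sketch leaves that entirely as a black box, so as written the proposal is a faithful roadmap of the paper's proof rather than a self-contained argument.
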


 A notable ingredient to proving 
 our results
 is Proposition~\ref{prop:max_new}, in which we show which clopen subgroupoids give rise to maximal abelian subalgebras.
In
 Section~\ref{sec:applications}, we prove that we recover multiple known results in the literature and we apply our main theorem to some explicit examples.
 We conclude  with open questions in our final Section~\ref{sec:Qs}.

\section*{Notation and Conventions}

All our groupoids are assumed to be locally compact and Hausdorff.
We say that 
a groupoid
$G$ is \emph{abelian} if it is a bundle of abelian
groups. For a unit $u\in \go$, we write
$G^{u}\coloneq r\inv (\sset{u})$ and
$G_{u}\coloneq s\inv (\sset{u})$. The \emph{isotropy group} at $u$ is
denoted $G(u)\coloneq G^{u}\cap G_{u}$, and the \emph{isotropy
  bundle} by $\Iso{G}=\bigcup_{u\in \go}G(u)$.  A groupoid is
\emph{\etale} if the range and source maps are local homeomorphisms.
This implies $\go$ is open in~$G$.  If in addition, the interior of
$\Iso{G}$ is reduced to $\go$, then $G$ is called \emph{effective}.
A
subgroupoid $S$ in $\Iso{G}$ is called \emph{normal} if $Sg=g S$
for all $g\in G$ so that $G$ acts on~$S$ by conjugation. 

Given a topological space $X$, we will denote the interior of a
subspace $Y$ by $\Int[X]{Y}$; if (or as soon as) the ambient space is
understood, we will instead write $Y^{\circ}$.  We will denote the
\emph{open support} of a function $f$ by $\suppo(f)$, while $\supp(f)$
will as usual denote its closure.

\section{Twists and their \cs-Algebras}
\label{sec:twists-their-cs}

Groupoid twists---sometimes called $\T$-groupoids---were introduced by
Kumjian in \cite{Kum:Diags} and are now ubiquitous in the subject.
Twists over \etale\ groupoids are discussed in detail in
\cite{Sims:gpds}*{\S11.1}.  In general, recall that a twist $\E$ over
$G$ is given by a central groupoid extension
\begin{equation}
  \label{eq:50}
  \begin{tikzcd}
    \go\times \T\arrow[r,"\iota",hook] &\E \arrow[r,"\pi",two heads]&
    G
  \end{tikzcd}
\end{equation}
where $\iota$ and $\pi$ are continuous groupoid homomorphisms such
that $\iota$ is a homeomorphism onto its range, and such that $\pi$ is
an open surjection inducing a homeomorphism of the unit space of $\E$
with $\go$ with kernel equal to the range of $\iota$.  We further
assume that $\iota(\pi(r(e)),1)=r(e)$ for all $e\in \E$, so that we
can identify the unit space of $\E$ with $\go$.  Furthermore that the
extension be central means that
\begin{equation}
  \label{eq:51}
  \iota\bigl(r(e),z\bigr)e=e\iota\bigl (s(e),z \bigr)\quad\text{for
    all $e\in \E$ and $z\in \T$.}
\end{equation}
Note that $\E$ becomes a principal $\T$-bundle with respect to the
action $z\cdot e=\iota(r(e),z)e$ in such a way that $\pi$ induces a
homeomorphism of the the orbit space $\E/\T$ with $G$.
Conversely, we can think of a twist as a groupoid $\E$ admitting a
free left $\T$-action that is compatible with the groupoid
structure---see \cite{wykwil:jot22}*{Lemma~6.1} for details.

\begin{example}\label{ex:twist given by $2$-cocycle}
  Throughout 
  we will  refer back
  to the special case
  treated in \cite{DGNRW:Cartan} where the twist over $G$ is induced
  by a continuous 
  \emph{normalized $2$-cocycle}
  ${c}\colon G\comp \to \T$, meaning that
  ${c}$ satisfies ${c}(g_{1},g_{2}){c}(g_{1}g_{2}, g_{3})={c}(g_{1},g_{2}g_{3}){c}(g_{2}, g_{3})$ for all $(g_{1},g_{2},g_{3})\in G^{(3)}$ and ${c}(g,s(g))=1={c}(r(g),g)$ for all $g\in G$. The associated twist
  $\E_{{c}}$ is defined as follows.
  As a set, 
  $\E_{{c}}$ is given by $G\times\T$, but the multiplication is
  twisted by the $2$-cocycle:
  $(g_{1},z_{1})\cdot(g_{2},z_{2}) =
  (g_{1}g_{2},{c}(g_{1},g_{2})z_{1}z_{2})$.
  In this case, we choose $\iota$ to be the inclusion map and $\pi$ to be the projection
  onto the first component.
\end{example}

Given a twist $\E$ over $G$ as in \eqref{eq:50}, we let
\begin{equation}
  \label{eq:1}
  C_{0}(G;\E)=\set{f\in C_{0}(\E):\text{$f(z\cdot e)=zf(e)$ for all
      $z\in\T$ and $e\in \E$}},
\end{equation}
and let $C_{c}(G;\E)=\set{f\in C_{0}(G;\E):f\in C_{c}(\E)}$.

Note that if $f_{1},f_{2}\in C_{0}(G;\E)$, then for any $e'\in \E$,
$\pi(e) \mapsto f_{1}(e)f_{2}(e\inv e')$ is a well-defined function on
$G^{r(e')}$.  If $G$ is \etale, equipped with counting measures as a
Haar system, then we therefore get a well-defined convolution product on
$C_{c}(G;\E)$ by
\begin{equation}
  \label{eq:2}
  f_{1}*f_{2}(e') = \sum_{\set{\pi(e)\in G:r(e)=r(e')}} f_{1}(e)f_{2}(e\inv e').
\end{equation}
Combined with the involution $f^{*}(e)=\overline{f(e\inv )}$, this
makes $C_{c}(G;\E)$ into a $*$-algebra.

\begin{remark}[Pedantry]
  \label{rem-pedantry} The notation in \eqref{eq:2} is meant to ensure
  that the sum is interpreted as being over the countable set
  $G^{r(e')}$ and not over the uncountable set~$\E^{r(e')}$.  A more
  pedantic approach would be to introduce any set-theoretic section
  $\s\colon G\to\E$ for $\pi$ and write
  \begin{equation}
    \label{eq:3}
    f_{1}*f_{2}(e') =\sum_{g\in G^{r(e')}} f_{1}(\s(g))f_{2}(\s(g)\inv e'),
  \end{equation}
  and to observe that the left-hand side of \eqref{eq:3} does not
  depend on the choice of section.  In the spirit of Winston
  Churchill's reaction to allegedly misplaced prepositions, we have
  chosen to write \eqref{eq:2} in place of \eqref{eq:3} to keep our
  formulas less daunting.  It also helps us connect to more general
  treatments, where $G$ is not necessarily \etale, such as
  \cite{MW:1992:CtsTrace} where the right-hand side of \eqref{eq:2} is
  simply the integral of the function $\pi(e)\mapsto f_{1}(e)f_{2}(e\inv e')$
  with respect to counting measure on $G^{r(e')}$.
\end{remark}

Given a twist $\E$ over an \etale\ groupoid $G$, we can form the
\emph{reduced twisted groupoid \cs-algebra} $\cs_{r}(G;\E)$ as the
completion of $C_{c}(G;\E)$ with respect to the reduced norm~\mbox{$\|\cdot\|_{r}$.}  For the sake of completeness, we review this
construction. As we assume $G$ to be \etale, $G_{u}$ is
discrete for all $u\in\go$.  Then, as in
\cite{MW:1992:CtsTrace}*{\S3}, we let $\ell^{2}(G_{u};\E_{u})$ be the
set of functions $\xi\colon \E_{u}\to \C$ such that
$\xi(z\cdot e)=z\xi(e)$ for all $e\in\E$ and $z\in\T$, and such that
\begin{equation}
  \label{eq:11}
  \sum_{\set{\pi(e)\in G:s(e)=u}}|\xi(e)|^{2}<\infty
\end{equation}
with the same understanding as in Remark~\ref{rem-pedantry}.  Then, as
in \cite{wil:toolkit}*{Ex~3.7.1}, it is not hard to see that
$\ell^{2}(G_{u};\E_{u})$ is a Hilbert space with respect to the inner
product
\begin{equation}
  \label{eq:4}
  \rip u<f_{1},f_{2}>=\sum_{\set{\pi(e)\in G:s(e)=u}} f_{1}(e)\overline {f_{2}(e)}
  .
\end{equation}
Just as in \cite{MW:1992:CtsTrace}*{\S3}, it is not hard to check that
$C_{c}(G_{u}; \E_{u})$ is dense in $\ell^{2}(G_{u};\E_{u})$.

We
define
$\theta_{u}\colon C_{c}(G;\E)\to B\bigl(\ell^{2}(G_{u};\E_{u})\bigr)$
by
\begin{equation}
  \label{eq:5}
  \theta_{u}(f)(\xi)=f*\xi\quad\text{for $\xi\in \ell^{2}(G_{u};\E_{u})$,}
\end{equation}
where the convolution is defined by the same formula as in~\eqref{eq:3}.
Then for $f\in C_{c}(G;\E)$ we define
\begin{equation}
  \label{eq:6}
  \|f\|_{r}=\sup_{u\in\go}\|\theta_{u}(f)\|.
\end{equation}
Since $\|\cdot\|_{r}$ is a norm on $C_{c}(G;\E)$, we can view
$C_{c}(G;\E)$ as a $*$-subalgebra of $\cs_{r}(G;\E)$.
We will (often tacitly) make frequent use of the following well-known result.
\begin{lemma}\label{lem:Urysohn}
    Suppose $\E$ is a twist over a  \LCH\ \etale\ groupoid~$G$. For any $e_0\in \E$, there exists an element $f\in C_{c}(G;\E)$ such that $f(e_0)\neq 0$ and $\pi(\suppo(f))$ is a bisection.
\end{lemma}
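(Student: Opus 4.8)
The plan is to build the desired function by starting with a bump function on $G$ that is supported on a bisection, then "lifting" it through the twist using a local section together with a partition-of-unity-type averaging over $\T$. First I would use that $G$ is \'etale to choose an open bisection $U\subseteq G$ containing $\pi(e_0)$, i.e.\ an open set on which both $r$ and $s$ restrict to homeomorphisms onto open subsets of $\go$; such a $U$ exists because the range and source maps are local homeomorphisms. Shrinking $U$ if necessary, I would also arrange that $\pi^{-1}(U)$ is trivialized as a $\T$-bundle, so that there is a continuous section $S\colon U\to\E$ with $\pi\circ S=\mathrm{id}_U$ and $\pi^{-1}(U)\cong U\times\T$ via $(g,z)\mapsto z\cdot S(g)$; this is possible by the local triviality of the principal $\T$-bundle $\E\to G$ built into \eqref{eq:50}. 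Write $e_0 = z_0\cdot S(\pi(e_0))$ for a unique $z_0\in\T$.

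Next I would pick $\phi\in C_c(G)$ with $\phi(\pi(e_0))\neq 0$ and $\supp(\phi)\subseteq U$ (ordinary Urysohn on the \LCH\ space $G$, using that $U$ is open), and then define $f\colon\E\to\C$ by setting $f(z\cdot S(g)) = z\,\phi(g)$ for $g\in U$, $z\in\T$, and $f\equiv 0$ off $\pi^{-1}(U)$. The $\T$-equivariance $f(w\cdot e)=w f(e)$ is immediate from the definition on $\pi^{-1}(U)$ and trivial off it. Continuity on $\pi^{-1}(U)$ follows from continuity of $\phi$, of $S$, and of the trivialization; continuity across the boundary follows because $f$ vanishes on a neighbourhood of every point of $\partial(\pi^{-1}(U))$, as $\supp(\phi)$ is a compact subset of the open set $U$ and $\pi$ is continuous and open. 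Thus $f\in C_0(G;\E)$, and since $\supp(f)\subseteq\pi^{-1}(\supp(\phi))$ is compact (as $\pi^{-1}(\supp(\phi))$ is a compact subset of the $\T$-bundle over the compact set $\supp(\phi)$), in fact $f\in C_c(G;\E)$. Finally $f(e_0) = z_0\,\phi(\pi(e_0))\neq 0$, and $\pi(\suppo(f))\subseteq\suppo(\phi)\subseteq U$ is contained in a bisection, hence is itself a bisection; since it is open in $G$, the restrictions of $r$ and $s$ to it remain injective, so it is a bisection as required.

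The one genuine subtlety—and the step I would be most careful about—is arranging simultaneously that $U$ is a bisection \emph{and} that $\pi^{-1}(U)$ is $\T$-trivial over $U$: both conditions hold on a neighbourhood basis of $\pi(e_0)$, so their intersection still contains an open bisection $U$ containing $\pi(e_0)$, and one should remark that \etale\ groupoids always have a basis of open bisections. Everything else—the Urysohn step on $G$, the $\T$-equivariance, and the compact-support bookkeeping—is routine. (One could alternatively avoid explicit trivializations: choose $h_0\in C_0(G;\E)$ with $h_0(e_0)\neq 0$ by a local triviality argument, multiply by $(\psi\circ\pi)$ for a suitable $\psi\in C_c(G)$ supported in an open bisection through $\pi(e_0)$ with $\psi(\pi(e_0))=1$, and take $f=(\psi\circ\pi)\,h_0$; this trades one trivialization for the more elementary existence of \emph{some} nonvanishing section near $e_0$.)
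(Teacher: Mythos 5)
Your argument is correct, and its overall shape matches the paper's: produce a $\T$-equivariant function that does not vanish at $e_0$, then multiply by a bump function pulled back from an open bisection of $G$. The difference lies in how the equivariant function is obtained. You build it from a local trivialization $\pi\inv(U)\cong U\times\T$ and a continuous section $S\colon U\to\E$, setting $f(z\cdot S(g))=z\,\phi(g)$; the paper instead applies Tietze to extend $z\cdot e_0\mapsto z$ from the compact orbit $\T\cdot e_0$ to some $g_0\in C_c(\E)$ and then averages, $g(e)=\int_\T z\,g_0(\bar z\cdot e)\,\mathrm{d}z$, to restore equivariance. The one point to be careful about in your version is the claim that local triviality is ``built into'' the definition of a twist: the extension \eqref{eq:50} only posits that $\pi$ is an open surjection, and the existence of continuous local sections (equivalently, local triviality of the principal $\T$-bundle) is a genuine theorem --- it follows from Gleason's local cross-section theorem for free actions of compact Lie groups, or from a direct argument for twists. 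This is standard and the paper itself invokes local trivializations later (in the proof of Lemma~\ref{lem:fancy lemma}), so your proof is sound; but the Tietze-plus-averaging route buys you independence from that fact, which is presumably why the paper prefers it. Your parenthetical alternative at the end is structurally identical to the paper's proof except that it still sources the nonvanishing equivariant function from a trivialization rather than from Tietze. All the remaining bookkeeping in your write-up (equivariance, continuity across the boundary of $\pi\inv(U)$, compactness of the support, and the fact that $\pi(\suppo(f))=\suppo(\phi)$ is an open subset of a bisection and hence a bisection) is handled correctly.
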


\begin{proof}
  By Tietze's extension theorem, we may find $g_0\in C_c(\E)$ such
  that $g_0(z\cdot e_0)=z$; the function
  $g\colon e\mapsto \int_\T z g_0(\bar z\cdot e)\,\mathrm{d}z$ is then in
  $C_c(G;\E)$ and satisfies $g(e_0)=1$.  Since $G$ is \etale, it
  has a basis of open bisections
  \cite{Renault:gpd-approach}*{Proposition~I.2.8}, so by Urysohn's, we
  can find a $q\in C_{c}(G)$ such that $\suppo(q)$ is a bisection of
  $G$ with $\pi(e_0) \in \suppo(q)$; in particular, $f\colon e\mapsto q(\pi(e))g(e)$ is an element of $C_{c}(G;\E)$ such that $f(e_0)\neq 0$ and $\pi(\suppo(f))$ is a bisection.
\end{proof}

\begin{example}
  \label{ex-abelian} Suppose that $\E$ is a twist over an \etale\
  groupoid $G$ and that $\E$ is a bundle of abelian groups.  That is,
  we assume that $\E$ is an abelian twist.  Of course, this implies
  that $G$ is abelian as well.  Then $C_{c}(G;\E)$ is commutative.
  Hence $\cs_{r}(G;\E)$ is commutative.

  Conversely, if $\E$ is not a bundle of abelian groups, then $\cs_{r}(G;\E)$ is non-commutative. To see this, assume first that there exists $e\in \E$ with $r(e)\neq s(e)$. Let $h\in C_0(G\z)$ with $h(r(e))=1$ and $h(s(e))=0$ and $f\in C_c(G;\E)$ with $f(e)\neq 0$ and $\pi(\suppo(f))$ a bisection (Lemma~\ref{lem:Urysohn}). Then
  \[
  (h\ast f)(e)  
    =h(r(e))f(e)=
  f(e)
  \neq
  0
  =f(e)h(s(e))=
    (f\ast h)(e),
  \]
  so $\cs_{r}(G;\E)$ is not commutative.
  
  Next, assume $e_{1},e_{2}\in \E(u)$ for some $u\in G\z$ are such that $e_{1}e_{2}\neq e_{2}e_{1}$. Take elements $f_{i}\in C_c(G;\E)$ with $\pi(\suppo(f_{i}))$ a bisection and $f_{i}(e_{i}) \neq 0$, so that
  \[
    (f_{1}\ast f_{2})(e_{1}e_{2}) = f_{1}(e_{1})f_{2}(e_{2}) \neq 0.
  \]
  On the other hand, 
  \[
    (f_{2}\ast f_{1})(e_{1}e_{2}) = 
    \sum_{\set{\pi(e)\in G:r(e)=u}} f_{2}(e)f_{1}(e\inv e_{1}e_{2}).
  \]
  By assumption, $\pi(e_{2})$ is the unique element of $G_u\cap \pi(\suppo(f_{2}))$, so that
  \[
      (f_{2}\ast f_{1})(e_{1}e_{2})
      =
      f_{2}(e_{2})f_{1}(e_{2}\inv e_{1}e_{2}).
  \]
  If $\pi(e_{2}\inv e_{1}e_{2})\neq \pi(e_{1})$, then $f(e_{2}\inv e_{1}e_{2})=0$ and thus
  \[
    (f_{2}\ast f_{1})(e_{1}e_{2})
    =
    0
    \neq
    (f_{1}\ast f_{2})(e_{1}e_{2}).
  \]
  If $\pi(e_{2}\inv e_{1}e_{2})= \pi(e_{1})$, then by the assumption that $e_{1}e_{2}\neq e_{2}e_{1}$,  there exists $z\in\T \setminus\set{1}$ such that $e_{2}\inv e_{1}e_{2}=z\cdot e_{1}$, so that
  \[
    (f_{2}\ast f_{1})(e_{1}e_{2})
    =
    f_{2}(e_{2})f_{1}(z\cdot e_{1})
    =
    z f_{1}(e_{1}) f_{2}(e_{2})
    \neq
    f_{1}(e_{1}) f_{2}(e_{2})
    =
    (f_{1}\ast f_{2})(e_{1}e_{2}).
  \]
    In both cases, we conclude $f_{1}\ast f_{2}\neq f_{2}\ast f_{1}$.
\end{example}

The following is a straightforward variation on
\cite{Sims:gpds}*{Lemma~9.1.3}.
\begin{lemma}
  \label{lem-bisec-span} The space $C_{c}(G;\E)$ is spanned by
  functions whose support is the preimage under $\pi$ of a bisection
  of~$G$.
\end{lemma}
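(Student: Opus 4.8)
The plan is to localize a given $f\in C_{c}(G;\E)$ by multiplying it with a partition of unity on $G$ that is subordinate to a finite cover of $\pi(\supp f)$ by open bisections. The first observation I would record is that, because $f$ is $\T$-equivariant, the function $\abs{f}$ is $\T$-invariant, so $\supp f$ is a closed $\T$-invariant set; since the $\T$-orbits on $\E$ are exactly the fibres of $\pi$, this gives $\supp f=\pi\inv(K)$ for the compact set $K\coloneq\pi(\supp f)\subseteq G$. The second observation is that, as $G$ is \etale, it has a basis of open bisections by \cite{Renault:gpd-approach}*{Proposition~I.2.8}, so $K$ is covered by finitely many open bisections $B_{1},\dots,B_{n}$.

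Next I would invoke the usual partition-of-unity construction on the \LCH\ space $G$ to obtain $\phi_{1},\dots,\phi_{n}\in C_{c}(G)$ with $\supp\phi_{i}\subseteq B_{i}$ and $\sum_{i=1}^{n}\phi_{i}\equiv 1$ on $K$, and then set $f_{i}(e)\coloneq\phi_{i}(\pi(e))\,f(e)$ for $e\in\E$. Checking that $f_{i}\in C_{c}(G;\E)$ is routine: $f_{i}$ is continuous with $\supp f_{i}\subseteq\supp f$ compact, and it satisfies $f_{i}(z\cdot e)=zf_{i}(e)$ because $\phi_{i}\circ\pi$ is $\T$-invariant while $f$ transforms by the character $z\mapsto z$. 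Since $\supp f_{i}$ is again $\T$-invariant we have $\supp f_{i}=\pi\inv\bigl(\pi(\supp f_{i})\bigr)$ with $\pi(\supp f_{i})\subseteq\supp\phi_{i}\subseteq B_{i}$, so $\pi(\supp f_{i})$ is a bisection and $f_{i}$ has the required form. Finally, because $\sum_{i}\phi_{i}\equiv 1$ on $K$ and $\supp f=\pi\inv(K)$, we get $\sum_{i}f_{i}=f$ on $\supp f$, while both sides vanish off $\supp f$; hence $f=\sum_{i=1}^{n}f_{i}$ lies in the claimed span.

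I do not expect any genuine obstacle here; the only points that require care are bookkeeping ones — that $\T$-equivariance forces $\supp f$ and each $\supp f_{i}$ to be $\pi$-saturated, which is precisely what allows one to pass between bisections in $G$ and their $\pi$-preimages in $\E$, and that a subset of an open bisection is again a bisection, so that shrinking each $B_{i}$ down to the compact set $\pi(\supp f_{i})$ costs nothing.
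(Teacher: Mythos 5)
Your proposal is correct and follows essentially the same route as the paper: cover $\pi(\supp f)$ by finitely many open bisections, take a subordinate partition of unity $\{\phi_i\}$ in $C_c(G)$, and decompose $f=\sum_i(\phi_i\circ\pi)\cdot f$. The extra bookkeeping about $\T$-invariance of $\supp f_i$ is a harmless (and slightly more careful) addition that gives the support exactly as a $\pi$-preimage rather than merely contained in one.
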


\begin{proof}
  Let $f\in C_{c}(G; \E)$.  Since $G$ is \etale, it has a basis of
  open bisections for its topology
  \cite{Renault:gpd-approach}*{Proposition~I.2.8}.  Since $\supp(f)$
  is compact, $\pi(\supp(f))$ can be covered with finitely many
  precompact open bisections $U_{1},\dots, U_{n}$.  Let
  $\sset{\phi_{i}}_{i=1}^{n}$ be a partition of unity in
  $C_{c}^{+}(G)$ subordinate as in \cite{wil:crossed}*{Lemma~1.43} for
  $\sset{U_{i}}_{i=1}^{n}$.  Then
  $f_{i}=(\phi_{i}\circ\pi)\cdot f\in C_{c}(G;\E)$ has
  $\supp f_{i} \subset \pi\inv(U_{i})$ and satisfies
  $f=\sum_{i} f_{i}$.
\end{proof}

The following generalization of Renault's
\cite{Renault:gpd-approach}*{Proposition~II.4.2} will be used
frequently.

\begin{prop}
  [\cite{BFPR:GammaCartan}*{Proposition~2.8}] \label{prop-bfpr}Let
  $\E$ be a twist over an \etale\ grou\-poid $G$.  Then there is a
  norm-decreasing linear map
  $j_{G}\colon \cs_{r}(G;\E)\to C_{0}(G;\E)$ such that
  $j_{G}(f)(e)=f(e)$ if $f\in C_{c}(G;\E)$.  Furthermore, for all
  $a,b\in \cs_{r}(G;\E)$ we have
  \begin{equation}
    \label{eq:7}
    j_{G}(a^{*})(e)=\overline{j_{G}(a)(e\inv )} \quad\text{and} \quad
    j_{G}(ab)(e') =\sum_{\set{\pi(e)\in G:r(e)=r(e')}} j_{G}(a)(e)
    j_{G}(b)(e\inv e') .
  \end{equation}
\end{prop}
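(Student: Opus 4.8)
The plan is to build $j_{G}$ by extending ``evaluation at $e$'' from $C_{c}(G;\E)$ to $\cs_{r}(G;\E)$ by continuity, so the crux is the pointwise estimate $\abs{f(e)}\le\norm{f}_{r}$ for $f\in C_{c}(G;\E)$ and $e\in\E$. I would prove it by realizing $f(e)$ as a matrix coefficient of the regular representation $\theta_{u}$, where $u\coloneq s(e)$: for $e_{0}\in\E_{u}$ let $\delta_{e_{0}}\in\ell^{2}(G_{u};\E_{u})$ be defined by $\delta_{e_{0}}(e')=z$ if $e'=z\cdot e_{0}$ and $\delta_{e_{0}}(e')=0$ otherwise, which is well defined, satisfies $\delta_{e_{0}}(z\cdot e')=z\delta_{e_{0}}(e')$, and has norm $1$. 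A short computation with \eqref{eq:2} shows $\theta_{u}(f)\delta_{u}=f\restr{\E_{u}}$ in $\ell^{2}(G_{u};\E_{u})$, whence $\rip u<\theta_{u}(f)\delta_{u},\delta_{e}>=f(e)$ and $\abs{f(e)}\le\norm{\theta_{u}(f)}\le\norm{f}_{r}$. Letting $n\to\infty$ in the approximations below, the same formulas give, for every $a\in\cs_{r}(G;\E)$, the identities $j_{G}(a)(e)=\rip{s(e)}<\theta_{s(e)}(a)\delta_{s(e)},\delta_{e}>$ and $\theta_{u}(a)\delta_{u}=j_{G}(a)\restr{\E_{u}}$; I will need the latter for the convolution formula.

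Given the estimate, I define $j_{G}$ thus: for $a\in\cs_{r}(G;\E)$ choose $f_{n}\in C_{c}(G;\E)$ with $\norm{f_{n}-a}_{r}\to0$; then $\norm{f_{n}-f_{m}}_{\infty}\le\norm{f_{n}-f_{m}}_{r}$, so $(f_{n})$ is uniformly Cauchy on $\E$ and converges uniformly to a function $j_{G}(a)$ not depending on the chosen sequence. Each $f_{n}$ lies in $C_{c}(\E)$, so the uniform limit lies in $C_{0}(\E)$; moreover $f_{n}(z\cdot e)=zf_{n}(e)$ passes to the limit, so $j_{G}(a)\in C_{0}(G;\E)$. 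Linearity is immediate, $\norm{j_{G}(a)}_{\infty}=\lim_{n}\norm{f_{n}}_{\infty}\le\norm{a}_{r}$ shows $j_{G}$ is norm-decreasing, and $j_{G}(f)=f$ on $C_{c}(G;\E)$ by construction. (Since $\theta_{u}$ is $\norm{\cdot}_{r}$-bounded on $C_{c}(G;\E)$, it extends to $\cs_{r}(G;\E)$, which is how I read $\theta_{u}(a)$ above.)

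For the two algebraic relations I argue by density. The involution identity is immediate: both sides of $j_{G}(a^{*})(e)=\overline{j_{G}(a)(e\inv)}$ depend $\norm{\cdot}_{r}$-continuously on $a$ (the adjoint is isometric, $j_{G}$ is bounded, point evaluation is bounded) and agree on $C_{c}(G;\E)$ since $f^{*}(e)=\overline{f(e\inv)}$. For the convolution identity I would first treat the case $b\in C_{c}(G;\E)$, $a\in\cs_{r}(G;\E)$ arbitrary: writing $\supp b=\pi\inv(K)$ with $K\subseteq G$ compact, the set $\set{g\in G^{r(e')}:g\inv\pi(e')\in K}$ is finite (the homeomorphism $g\mapsto g\inv\pi(e')$ of $G^{r(e')}$ onto $G_{s(e')}$ carries it onto $K\cap G_{s(e')}$, which is finite as $G$ is \etale), so for $f_{n}\to a$ the sum $(f_{n}*b)(e')$ runs over a fixed finite set, permitting termwise passage to the limit; since $(f_{n}*b)(e')=j_{G}(f_{n}b)(e')\to j_{G}(ab)(e')$, the formula follows for such $b$. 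For general $a,b$, approximate $b$ by $b_{n}\in C_{c}(G;\E)$ and bound the error in the now-infinite sum by Cauchy--Schwarz, using $\sum_{\set{\pi(e):r(e)=r(e')}}\abs{j_{G}(a)(e)}^{2}=\norm{j_{G}(a^{*})\restr{\E_{r(e')}}}_{2}^{2}\le\norm{a}_{r}^{2}$ (via the involution identity and $\theta_{r(e')}(a^{*})\delta_{r(e')}=j_{G}(a^{*})\restr{\E_{r(e')}}$) and $\sum_{\set{\pi(e):r(e)=r(e')}}\abs{j_{G}(b_{n}-b)(e\inv e')}^{2}=\norm{\theta_{s(e')}(b_{n}-b)\delta_{s(e')}}_{2}^{2}\le\norm{b_{n}-b}_{r}^{2}$, the last equality by the substitution $g\mapsto g\inv\pi(e')$.

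I expect this last interchange of a limit with an infinite sum to be the one real obstacle; everything else is careful but routine bookkeeping with the circle action. What makes it work is the identity $\theta_{u}(a)\delta_{u}=j_{G}(a)\restr{\E_{u}}$ from the first paragraph: it recasts the $\ell^{2}$-sums in the Cauchy--Schwarz estimate as squared norms of vectors in the concrete Hilbert space $\ell^{2}(G_{u};\E_{u})$, on which $\theta_{u}$ is contractive for the reduced norm, so the error is $O(\norm{b_{n}-b}_{r})$ uniformly in $e'$ and the limiting sum $\sum_{\set{\pi(e):r(e)=r(e')}}j_{G}(a)(e)j_{G}(b)(e\inv e')$ converges absolutely.
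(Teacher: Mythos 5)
Your argument is correct. The paper offers no proof of this proposition---it is quoted verbatim from \cite{BFPR:GammaCartan}*{Proposition~2.8}---and your construction (realizing $f(e)$ as the matrix coefficient $\rip{s(e)}<\theta_{s(e)}(f)\delta_{s(e)},\delta_{e}>$ to get $\abs{f(e)}\le\norm{f}_{r}$, extending by uniform convergence, and passing the algebraic identities through the limit via the identity $\theta_{u}(a)\delta_{u}=j_{G}(a)\restr{\E_{u}}$ and Cauchy--Schwarz) is exactly the standard Renault-style proof used in that reference.
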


\begin{remark}
  \label{rem-con-defined} One consequence of
  Proposition~\ref{prop-bfpr} is that the convolution product
  \eqref{eq:2} makes sense for all
  $f,g\in j_{G}\bigl(\cs_{r}(G;\E) \bigr)\subset C_{0}(G;\E)$.  We
  don't know if it can be extended to all of $C_{0}(G;\E)$.  However,
  provided either $f$ or $g$ is in $C_{c}(G;\E)$, it is easy to see
  that the formula certainly is well-defined---in fact the sum is
  finite.
\end{remark}

\begin{lemma} [\cite{BFPR:GammaCartan}*{Lemma~2.7}]
  \label{lem:j-map commutes} Let $\E$ be a twist over an \etale\
  groupoid $G$.  If $S$ is an open subgroupoid of~$G$ and
  we let
  $\E_{S}=\pi\inv (S)$, then
  \begin{equation}
    \label{eq:8}
    \begin{tikzcd}
      S\z\times \T\arrow[r,"\iota",hook] &\E_{S} \arrow[r,"\pi",two
      heads]& S
    \end{tikzcd}
  \end{equation}
  is a twist over $S$ and ``extension by zero'' from
  $C_{c}(S;\E_{S})\to C_{c}(G;\E )$ extends to an inclusion
  $i\colon \cs_{r}(S;\E_{S})\to \cs_{r}(G;\E)$.  Furthermore, the
  diagram
  \begin{equation}
    \label{eq:9}
    \begin{tikzcd}
      C_{c}(S;\E_{S}) \arrow[r,hook] \arrow[d,hook,"i"] &
      \cs_{r}(S;\E_{S}) \arrow[r,,"j_{S}"] \arrow[d,"i",hook] &
      C_{0}(S;\E_{S}) \arrow[d,hook,"i"] \\
      C_{c}(G;\E) \arrow[r,hook] & \cs_{r}(G;\E) \arrow[r,"j_{G}"] &
      C_{0}(G;\E)
    \end{tikzcd}
  \end{equation}
  commutes.
\end{lemma}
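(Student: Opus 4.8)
The plan is to verify in turn that \eqref{eq:8} is a twist over $S$, that ``extension by zero'' is an injective $*$-homomorphism at the level of $C_{c}$ (and of $C_{0}$) which survives passage to the reduced completions, and that the square \eqref{eq:9} commutes; the only step that needs genuine work is the identification of the two reduced norms. For the first point, I would note that an open subgroupoid $S$ of a \LCH\ \etale\ groupoid is again \LCH\ and \etale, and that $\E_{S}=\pi\inv(S)$ is an open (hence \LCH) subgroupoid of $\E$. Restricting $\iota$ to $S\z\times\T$ and $\pi$ to $\E_{S}$ gives continuous homomorphisms for which $\iota$ is still a homeomorphism onto its range and $\pi$ is a surjection onto $S$; this restricted $\pi$ is open because $\E_{S}$ is $\pi$-saturated, its kernel is $\E_{S}\cap\iota(\go\times\T)=\iota(S\z\times\T)$, and centrality~\eqref{eq:51} together with the identification of the unit space with $S\z$ is inherited from $\E$.

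Next, because $\E_{S}$ is open in $\E$, zero-extension carries $C_{c}(\E_{S})$ into $C_{c}(\E)$ and $C_{0}(\E_{S})$ into $C_{0}(\E)$---a net converging to a point outside $\E_{S}$ eventually leaves every compact subset of $\E_{S}$---and it preserves $\T$-equivariance since $\E_{S}$ is $\T$-invariant; this produces the two right-hand vertical arrows of \eqref{eq:9}. Since $S$ is a \emph{sub}groupoid, for $f_{1},f_{2}$ supported in $\E_{S}$ every nonzero summand $f_{1}(e)f_{2}(e\inv e')$ in~\eqref{eq:2} has $\pi(e),\pi(e\inv e')\in S$, hence $\pi(e')\in S$, so the sum collapses to the convolution computed in $C_{c}(S;\E_{S})$; combined with $f^{*}(e)=\overline{f(e\inv)}$ this makes zero-extension an injective $*$-homomorphism $C_{c}(S;\E_{S})\to C_{c}(G;\E)$.

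The crux---and the step I expect to be the main obstacle---is showing that $\norm{f}_{r}$ is the same whether $f\in C_{c}(S;\E_{S})$ is regarded as living in $C_{c}(S;\E_{S})$ or in $C_{c}(G;\E)$. I would fix $u\in\go$ and analyze $\theta_{u}^{G}(f)$ on $\ell^{2}(G_{u};\E_{u})$. Using that $f$ is supported in $\E_{S}$ and that elements of $S$ have both range and source in $S\z$ (which need not be all of $\go$), one checks that $\theta_{u}^{G}(f)$ annihilates the functions supported over $\{g\in G_{u}:r(g)\notin S\z\}$, while on the complementary subspace it respects the partition of $\{g\in G_{u}:r(g)\in S\z\}$ into the sets $O=\{g:gg_{0}\inv\in S\}$. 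For each such $O$, picking $g_{0}\in O$ and a lift $\tilde g_{0}\in\E$ of $g_{0}$, the map $e\mapsto e\tilde g_{0}$ should be a $\T$-equivariant unitary from $\ell^{2}\bigl(S_{r(g_{0})};(\E_{S})_{r(g_{0})}\bigr)$ onto the $O$-summand of $\ell^{2}(G_{u};\E_{u})$ intertwining $\theta^{S}_{r(g_{0})}(f)$ with the restriction of $\theta_{u}^{G}(f)$. This gives $\norm{\theta_{u}^{G}(f)}=\sup_{O}\norm{\theta^{S}_{r(g_{0})}(f)}\le\norm{f}_{r}$, while the reverse inequality follows by taking $u\in S\z$ and the coset $O=S_{u}$ through the unit. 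Hence zero-extension is isometric for the reduced norms and extends to an injective $*$-homomorphism $i\colon\cs_{r}(S;\E_{S})\to\cs_{r}(G;\E)$. Keeping track of which ranges land in $S\z$ and of the $\T$-action throughout is where the care will be needed.

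Finally, the left square of \eqref{eq:9} commutes by construction of $i$; in the right square, $j_{G}(i(f))$ and $i(j_{S}(f))$ both coincide with the zero-extension of $f$ when $f\in C_{c}(S;\E_{S})$, since $j_{S}$ and $j_{G}$ restrict to the identity on $C_{c}$. Because $C_{c}(S;\E_{S})$ is dense in $\cs_{r}(S;\E_{S})$, $i$ is isometric, the $j$-maps are norm-decreasing into the uniform norm, and zero-extension is a uniform-norm isometry on the $C_{0}$-algebras, one passes to limits to conclude $j_{G}\circ i=i\circ j_{S}$ on all of $\cs_{r}(S;\E_{S})$.
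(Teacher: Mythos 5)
Your argument is correct, but it is substantially more self-contained than the paper's: the paper disposes of everything except the commutativity of \eqref{eq:9} by citing \cite{BFPR:GammaCartan}*{Lemma~2.7}, and then proves the diagram commutes exactly as you do, by checking it on $C_{c}(S;\E_{S})$ and invoking density and continuity of $i$, $j_{S}$, $j_{G}$. What you supply in addition is a proof of the cited material, and the key step---the coset decomposition of $\ell^{2}(G_{u};\E_{u})$---is sound: for $f$ supported in $\E_{S}$ the operator $\theta^{G}_{u}(f)$ kills the summand over $\set{g\in G_{u}:r(g)\notin S\z}$ and is block-diagonal over the cosets $Sg_{0}$, each block being unitarily equivalent to $\theta^{S}_{r(g_{0})}(f)$ via right translation by a lift $\tilde g_{0}$; together with the coset $S_{u}$ through a unit $u\in S\z$ this gives equality of the two reduced norms, hence an isometric inclusion $i$. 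Two small remarks: openness of $\pi\restr{\E_{S}}\colon\E_{S}\to S$ needs only that $\E_{S}$ is open in $\E$ (saturation is what gives surjectivity onto $S$, not openness); and note that the isometry argument is genuinely needed here because $S\z$ may be a proper subset of $\go$, which is precisely the bookkeeping you flag. Nothing in your write-up fails; it simply replaces the paper's citation with the standard proof of that lemma.
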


\begin{proof}
  All but the commutativity of the diagram is in
  \cite{BFPR:GammaCartan}*{Lemma~2.7}.  Since the diagram clearly
  commutes on the level of $C_{c}$-functions, it commutes in general
  by continuity.
\end{proof}

As described in the introduction, we aim to find Cartan subalgebras of
$\cs_r(G;\E)$.  For easy reference, we reall the definition here.

\begin{definition}
  [\cite{ren:irms08}*{Definition~5.1}] \label{def-cartan}A
  \cs-subalgebra $B$ of a \cs-algebra $A$ is called \emph{Cartan} if
  \begin{enumerate}
  \item $B$ is maximal abelian,
  \item $B$ is regular, meaning that the set
    \[N(B)=\set{n\in A:
     \text{$nBn^{*}\subset B$ and $n^{*}Bn\subset B$}
        }\] of \emph{normalizers} of $B$ generates
    $A$ as a \cs-algebra, and
  \item there is a faithful conditional expectation
    $\Phi\colon A\to B$.
  \end{enumerate}
\end{definition}

\begin{remark}
  Originally, 
  the definition of a Cartan subalgebra
  contained the additional
  requirement that $B$ should contain an approximate identity for $A$.
  Recently, Pitts has shown this to be superfluous
  \cite{pitts2021normalizers}.
\end{remark}

Before we focus on our main results, let us prove the following lemma which will come in handy multiple times.
\begin{lemma}\label{lem:int is sbgpd}
    Suppose $G$ is a \LCH\  groupoid with open range map, and let $\E$ be a twist over $G$ as in \eqref{eq:50}.
    \begin{enumerate}[label=\textup{(\alph*)}]
    \item\label{it:int(H)} If $H$ is a subgroupoid of $G$, then so is $\Int[G]{H}$, and $\Int[\E]{\pi\inv(H)}=\E_{\Int[G]{H}}$.
    \item\label{item:saturation still abelian} If $\mathcal{F}$ is a
      subgroupoid of $\E$, then so is $\pi\inv (\pi(\mathcal{F}))$. If
      $\mathcal{F}$ is abelian, then so is $\pi\inv (\pi(\mathcal{F}))$.
    \item\label{item:int of IsoE vs int of IsoG}
      $\Int[\E]{\Iso{\E}}=
      \E_{\Int[G]{\Iso{G}}}
      $ and
      $\pi(\Int[\E]{\Iso{\E}})=\Int[G]{\Iso{G}}$. In particular,
      $\Int[\E]{\Iso{\E}}$ is a subgroupoid of~$\E$.
    \end{enumerate}
\end{lemma}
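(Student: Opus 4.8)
The plan is to prove the three parts of Lemma~\ref{lem:int is sbgpd} in order, using that $\pi$ is an open continuous surjection and a groupoid homomorphism, that $\iota$ embeds $\go \times \T$ as an open (indeed clopen) subset meeting every fiber, and that $r$ (hence also $s$) is open on $G$ and on $\E$.

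\emph{Part \partref{1}.} First I would show $\Int[G]{H}$ is a subgroupoid. It contains units: if $u \in H\z \subseteq H$ then, since $H$ is a subgroupoid and $r$ is open, a neighborhood argument shows $u$ is interior to $H$; more directly, one argues that $\Int[G]{H}$ is a subgroupoid by checking it is closed under inversion and multiplication. Inversion is immediate since $g \mapsto g\inv$ is a homeomorphism of $G$ and maps $H$ onto $H$, so it maps $\Int[G]{H}$ onto $\Int[G]{H}$. For multiplication, given composable $g_1,g_2 \in \Int[G]{H}$ with open neighborhoods $U_i \subseteq H$, the set $U_1 U_2$ need not be open, but one uses that multiplication $G\comp \to G$ is open (which holds for \etale\ or more generally open-range groupoids) together with the fact that $U_1 \times_{G\z} U_2$ is open in $G\comp$; its image is an open subset of $H$ containing $g_1 g_2$. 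For the identity $\Int[\E]{\pi\inv(H)} = \E_{\Int[G]{H}} = \pi\inv(\Int[G]{H})$: the inclusion ``$\supseteq$'' holds because $\pi$ is continuous, so $\pi\inv$ of an open set inside $H$ is an open set inside $\pi\inv(H)$; the inclusion ``$\subseteq$'' holds because $\pi$ is open, so $\pi$ of an open set inside $\pi\inv(H)$ is an open set inside $H$, whence its $\pi$-preimage — which contains the original set since $\pi\inv(H)$ is $\T$-saturated — witnesses membership in $\pi\inv(\Int[G]{H})$.

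\emph{Part \partref{2}.} That $\pi\inv(\pi(\mathcal F))$ is a subgroupoid follows from $\pi$ being a surjective homomorphism: $\pi(\mathcal F)$ is a subgroupoid of $G$ (image of a subgroupoid), and the preimage of a subgroupoid under a homomorphism is a subgroupoid. For the abelian claim, suppose $\mathcal F$ is a bundle of abelian groups. Then $\pi(\mathcal F) \subseteq \Iso{G}$ and is a bundle of abelian groups. Now take $e_1, e_2 \in \pi\inv(\pi(\mathcal F))$ in the same fiber $\E(u)$; then $\pi(e_i) = \pi(f_i)$ for some $f_i \in \mathcal F$, necessarily in the same fiber $\mathcal F(u)$ (using $\iota(u,1) = u$), so $\pi(e_1)\pi(e_2) = \pi(f_1 f_2) = \pi(f_2 f_1) = \pi(e_2)\pi(e_1)$. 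Hence $e_1 e_2$ and $e_2 e_1$ differ by an element of $\ker\pi = \iota(\go \times \T)$, i.e.\ $e_1 e_2 = z \cdot e_2 e_1$ for some $z \in \T$; I then need $z = 1$. For this I would use that $\mathcal F$ is abelian: writing $e_i = z_i \cdot f_i$, one computes $e_1 e_2 = z_1 z_2 \cdot f_1 f_2 = z_1 z_2 \cdot f_2 f_1$ and similarly $e_2 e_1 = z_2 z_1 \cdot f_2 f_1 = z_1 z_2 \cdot f_2 f_1$, using centrality \eqref{eq:51} to pull the $\T$-factors out; so $e_1 e_2 = e_2 e_1$. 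The main subtlety here is bookkeeping with the $\T$-action and centrality, but it is routine.

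\emph{Part \partref{3}.} Apply \partref{1} with $H = \Iso{G}$, which is a subgroupoid of $G$: this gives $\Int[\E]{\pi\inv(\Iso{G})} = \E_{\Int[G]{\Iso{G}}}$. It remains to identify $\pi\inv(\Iso{G})$ with $\Iso{\E}$, i.e.\ $e \in \E$ has $r(e) = s(e)$ if and only if $r(\pi(e)) = s(\pi(e))$. This is because $\pi$ restricts to a homeomorphism of unit spaces intertwining range and source maps ($\pi \circ r = r \circ \pi$, $\pi \circ s = s \circ \pi$, and $\pi$ is injective on $\E\z = \go$). Finally, $\pi(\Int[\E]{\Iso{\E}}) = \pi(\E_{\Int[G]{\Iso{G}}}) = \pi(\pi\inv(\Int[G]{\Iso{G}})) = \Int[G]{\Iso{G}}$ by surjectivity of $\pi$; and $\Int[\E]{\Iso{\E}}$ is a subgroupoid because it equals $\pi\inv$ of the subgroupoid $\Int[G]{\Iso{G}}$, which is a preimage of a subgroupoid under the homomorphism $\pi$, or directly because it is the interior of the subgroupoid $\Iso\E$ (applying the already-proven fact that interiors of subgroupoids are subgroupoids, now in $\E$, using that $\E$ also has open range map).

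The step I expect to be the main obstacle is establishing, in part \partref{1}, that the interior of a subgroupoid is closed under multiplication: one must be careful that $G$ (and $\E$) having open range map is enough to guarantee openness of the multiplication map $G\comp \to G$, or else argue more directly using \etale-ness via local bisections; depending on the generality stated (``open range map''), a short separate argument that multiplication is open may be needed, and I would isolate that as the crux.
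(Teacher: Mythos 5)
Your proposal is correct and follows essentially the same route as the paper: interiors of subgroupoids are subgroupoids because inversion is a homeomorphism and multiplication is open (the paper cites \cite{Sims:gpds}*{Lemma 8.4.11} for the fact that an open range map forces open multiplication, which is exactly the point you isolated as the crux), the identity $\Int[\E]{\pi\inv(H)}=\pi\inv(\Int[G]{H})$ comes from $\pi$ being continuous and open, part \partref{2} uses the same centrality computation $e_1e_2=(z_1z_2)\cdot(f_1f_2)=(z_2z_1)\cdot(f_2f_1)=e_2e_1$, and part \partref{3} is the same specialization to $H=\Iso{G}$. No gaps.
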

\begin{proof}
\ref{it:int(H)}
As the inversion map of  $G$ is a homeomorphism, it restricts to a map on $H^\circ \coloneq \Int[G]{H}$.
 Since the range map of $G$ is open,
  its multiplication map is open by \cite{Sims:gpds}*{Lemma 8.4.11}, so it restricts to a multiplication map on $H^\circ$, proving that $H^\circ$ is a subgroupoid of~$G$.
  Since $\pi$ is continuous, we have
  \[
    \E_{H^\circ}
    =
    \pi\inv (H^\circ)
    \subset
    \Int[\E]{\pi\inv(H)}
    .
  \]
  For the other inclusion, note that $\pi(\pi\inv(H))=H$ by surjectivity of $\pi$, so since $\pi$ is an open map, we have
  \[
    \pi(\Int[\E]{\pi\inv(H)}) \subset H^\circ,
  \]
  which implies
  \[
    \Int[\E]{\pi\inv(H)}
    =:
    V
    \subset 
    \pi\inv(\pi(V)) \subset \pi\inv (H^\circ)
    =
    \E_{H^\circ},
  \]
  as claimed.

    \ref{item:saturation still abelian} Because $\pi$ is a groupoid
    homomorphism, $\pi(\mathcal{F})$ is a subgroupoid of~$G$. Therefore,
    $\pi\inv (\pi(\mathcal{F}))$ is a subgroupoid of $\E$ by
    \cite{AMP:IsoThmsGpds}*{Proposition 9}. Now assume that $\mathcal{F}$ is
    abelian. For any $e \in \pi\inv (\pi(\mathcal{F}))$, there exists
    $e' \in \mathcal{F}$ such that $\pi(e) = \pi(e')$.  Since $\mathcal{F}$ is
    abelian, we have that $r(e)=r(e')$ coincides with $s(e)=s(e')$,
    proving that $\pi\inv (\pi(\mathcal{F})) \subset \Iso{\E}$. To see that
    $\pi\inv (\pi(\mathcal{F}))$ is abelian, fix
    $e_1,e_2 \in \pi\inv (\pi(\mathcal{F}))$.  Then there exists
    $e_1',e_2' \in \mathcal{F}$ and $z_1,z_2 \in \T$ such that
    $e_1 = z_1\cdot e_1'$ and $e_2 = z_2\cdot e_2'$.  Since $\E$ is a
    central extension
    \begin{align*}
      e_1e_2 &= (z_1 \cdot e_1')(  z_2\cdot e_2') = (z_1z_2)\cdot (e_1' e_2')
      \\
      \intertext{which, since $\mathcal{F}$ is abelian, is}
             &= (z_2z_1)\cdot (e_2' e_1') \\
             &= (z_2 \cdot e_2')( z_1 \cdot e_1') = e_2 e_1,
    \end{align*}
    so $\pi\inv (\pi(\mathcal{F}))$ is abelian.
    
    \ref{item:int of IsoE vs int of IsoG} 
    The claim that $\Int[\E]{\Iso{\E}}=\E_{\Int[G]{\Iso{G}}}$ follows from Part~\ref{it:int(H)} applied to $H=\Iso{G}$ and from the fact that $\Iso{\E}=\pi\inv(\Iso{G})$ since $\pi$ preserves ranges and sources.
    The
    equality $\pi(\Int[\E]{\Iso{\E}})=\Int[G]{\Iso{G}}$ now follows
    from surjectivity of $\pi$. 
    By Part~\ref{it:int(H)}, $\Int[G]{\Iso{G}}$ is a subgroupoid of~$G$
    and so
    \( \Int[\E]{\Iso{\E}}=\pi\inv(\Int[G]{\Iso{G}}) \) is a
    subgroupoid of $\E$, again by \cite{AMP:IsoThmsGpds}*{Proposition
    9}.
\end{proof}

\section{The Main Theorem}
\label{sec:main-theorem}

Our main theorem is a generalization of \cite{DGNRW:Cartan}*{Theorem
3.1} to general twisted groupoid \cs-algebras.  We first need the following definition:
\begin{definition}\label{def:Ad_S,Omega}
    Assume $\E$ is a twist as in \eqref{eq:50} over a \LCH\ groupoid $G$. Assume $S$ is an open subgroupoid of $G$. For $u\in \go$ and $e\in \E(u)$, write
    \begin{equation}
        \operatorname{Ad}_{S}(e)
        \coloneq
        \set{
            \tau\inv e \tau : \tau\in \E_{S}(u)
        },
    \end{equation}
    with the understanding that
    $\operatorname{Ad}_{S}(e)=\emptyset$ if $u\notin S\z$.
    Note that the set $\pi(\operatorname{Ad}_{S}(e))$ does not depend on $e$ but only on $\pi(e)$, so we may let
    \[
        \operatorname{Ad}_{S}(\pi(e))\coloneq \pi(\operatorname{Ad}_{S}(e))
        .
    \]
    Let $\overline{\N}=\Z_{\geq 0}\cup\set{\infty}$ denote the extended natural numbers, and define $\Omega_{S}\colon \Iso{\E}\to\overline{\N}$ for $e\in \Iso{\E}$ by
  \[
    \Omega_{S}(e) =
    \begin{cases}
        \left|\operatorname{Ad}_{S}(e)\right|,&\text{ if }  \left| \operatorname{Ad}_{S}(e)\right| = \left|\operatorname{Ad}_{S}(\pi(e))\right|<\infty,        
        \\
        \infty&\text{ otherwise.}
    \end{cases}
  \]
  In other words, $\Omega_S(e)$ checks whether or not $\pi$ is injective on the set $\operatorname{Ad}_{S}(e)$, and whether $\operatorname{Ad}_{S}(\pi(e))$ is finite.
\end{definition}

\begin{remark}
In the case where $\E=\E_c$ is given by a $2$-cocycle $c$, an equality of the form $(t,w)\inv (g,z) (t,w)=(g,z)$ in $\E_{c}$ implies not only that $t\inv gt=g$ but also that $c(g,t)=c(t,g)$. It is easy to see in examples
that the condition $c(g,t)=c(t,g)$ is \emph{not} implied by $t\inv g t=g$.
In other words, the equality $\tau\inv e\tau = e$ in 
any twist
$\E$ is a stronger condition than $\pi(\tau)\inv \pi(e) \pi(\tau) = \pi(e)$ in $G$, and we can 
in general
only deduce $|\operatorname{Ad}_{S}(e)|\geq|\operatorname{Ad}_{S}(\pi(e))|$.
In Example~\ref{ex:rotation}, the reader can find an example in which this inequality is strict, since there, the entire groupoid $G$ is abelian, but the twist is not.
\end{remark}

In the following, $i$
denotes the inclusion map from Lemma~\ref{lem:j-map commutes}.
\begin{thm}\label{thm:May17}
   Suppose 
  $\E$ is a twist 
  over a \LCH, \etale\ groupoid~$G$, and $S$ is an open subgroupoid of $G$. Let $B=i(\Cst_{r}(S;\E_{S}))$ and $A=\Cst_{r}(G;\E)$. Then the following statements  are equivalent.
  \begin{enumerate}[label=\textup{(\roman*)}]
    
    \item\label{it:thm:May17:B:subset} $B$ is a Cartan subalgebra of $A$ for which
    \begin{equation}
        \set{
        h\in C_{c}(G;\E):
        \pi(\suppo(h)) \text{ is a bisection}
        } \subset N(B).
    \end{equation}
    
    \item\label{it:thm:May17:B:cap} $B$ is a Cartan subalgebra of $A$ for which the set
    \begin{equation}\label{eq:thm:May17:B cap} 
       \set{
        a\in A:
        \pi(\suppo(j_{G}(a))) \text{ is a bisection}
        } 
        \cap  N(B)
    \end{equation}
    generates $A$.
    
    \item\label{it:thm:May17:S:Omega} We have 
      \begin{gather}  
      \go\subset S
      \text{ and }
      \Int[\E]{\Omega_{S}\inv(\set{1})}=\E_{S},
      \label{eq:thm:May17:Int Omega=1}
      \tag{%
      \textsf{max}}
      \\
        \label{eq:thm:May17:icc}
        \Int[\E]{\Omega_{S}\inv (\Z_{>1})}=\emptyset
        \tag{%
        \textsf{ricc}}
        ,
    \end{gather}
      and $S$ is closed and normal in $G$.
      \item\label{it:thm:May17:S:Omega+max} $S$ is maximal among open subgroupoids of $\Int[G]{\Iso{G}}$ for which
      $\E_{S}$ is abelian,  $S$ satisfies Condition~\eqref{eq:thm:May17:icc}, and $S$ is closed and normal in $G$.
  \end{enumerate}
\end{thm}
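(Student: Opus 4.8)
The plan is to prove the cycle of implications \ref{it:thm:May17:B:subset} $\Rightarrow$ \ref{it:thm:May17:B:cap} $\Rightarrow$ \ref{it:thm:May17:S:Omega} $\Rightarrow$ \ref{it:thm:May17:B:subset}, together with the separate equivalence \ref{it:thm:May17:S:Omega} $\Leftrightarrow$ \ref{it:thm:May17:S:Omega+max}. The implication \ref{it:thm:May17:B:subset} $\Rightarrow$ \ref{it:thm:May17:B:cap} is immediate: by Proposition~\ref{prop-bfpr} we have $j_{G}(h)=h$ for every $h\in C_{c}(G;\E)$, so the hypothesis of \ref{it:thm:May17:B:subset} places every $h\in C_{c}(G;\E)$ with $\pi(\suppo(h))$ a bisection into the set \eqref{eq:thm:May17:B cap}; by Lemma~\ref{lem-bisec-span} those $h$ span $C_{c}(G;\E)$, which is dense in $A$, so \eqref{eq:thm:May17:B cap} generates $A$.

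For \ref{it:thm:May17:B:cap} $\Rightarrow$ \ref{it:thm:May17:S:Omega} I would read the four conditions on $S$ off the Cartan hypothesis one at a time. Since $i$ is injective, $B\cong\Cst_{r}(S;\E_{S})$, so Example~\ref{ex-abelian} applied to the twist $\E_{S}$ over $S$ shows that $B$ being abelian forces $\E_{S}$ to be a bundle of abelian groups; hence $S\subseteq\Iso{G}$ and, being open, $S\subseteq\Int[G]{\Iso{G}}$. By Lemma~\ref{lem:j-map commutes} every element of $B$ has $j_{G}$-image supported in $S$; maximal abelianness --- which I would analyze via Proposition~\ref{prop:max_new} --- then forces $S$ to be closed, forces $\go\subseteq S$ (otherwise, $S\z$ being clopen in $\go$, any nonzero $h\in A$ supported in the $j_{G}$-picture over $\go\setminus S\z$ satisfies $hb=bh=0$ for all $b\in B$, so $\C h+B$ is a strictly larger abelian subalgebra), and yields \eqref{eq:thm:May17:Int Omega=1}. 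Normality of $S$ comes from regularity: if $h$ is a normalizer with $\pi(\suppo(h))$ a bisection lying over $g\in G$, then $\pi\bigl(\suppo(j_{G}(hbh^{*}))\bigr)\subseteq\pi(\suppo(h))\,S\,\pi(\suppo(h))\inv$ for $b\in B$, and requiring this to lie in $S$ for all such $h$ gives $gSg\inv=S$. Finally \eqref{eq:thm:May17:icc}: if $\Int[\E]{\Omega_{S}\inv(\Z_{>1})}$ were nonempty, a point $e_{0}$ in it has $\operatorname{Ad}_{S}(e_{0})$ finite with $\pi$ injective on it, so averaging a function in $C_{c}(G;\E)$ supported near $e_{0}$ over $\operatorname{Ad}_{S}(e_{0})$ produces an element of $A$ that commutes with $B$ but is not in $B$ (its $j_{G}$-support meets $\E\setminus\E_{S}$, since $e_{0}\notin\E_{S}$ by \eqref{eq:thm:May17:Int Omega=1}), contradicting maximal abelianness.

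For \ref{it:thm:May17:S:Omega} $\Rightarrow$ \ref{it:thm:May17:B:subset} I would run these arguments backwards. From \eqref{eq:thm:May17:Int Omega=1} we get $\E_{S}\subseteq\Omega_{S}\inv(\set 1)$, so $\E_{S}$ --- and hence $B$ --- is abelian, and since $S$ is clopen, Proposition~\ref{prop:max_new} together with \eqref{eq:thm:May17:icc} shows $B$ is maximal abelian. Because $S$ is clopen, the multiplier $f\mapsto\chi_{S}f$ on $C_{c}(G;\E)$ extends --- by normality of $S$, via a positive-definiteness argument as for normal subgroups of discrete groups --- to a conditional expectation $\Phi\colon A\to B$ with $j_{S}(\Phi(a))=j_{G}(a)\restr{S}$; it is faithful because $\go\subseteq S$, so $\Phi(a^{*}a)=0$ forces $j_{G}(a^{*}a)\restr{\go}=0$, i.e.\ $\sum_{s(e)=u}|j_{G}(a)(e)|^{2}=0$ for all $u$, hence $j_{G}(a)=0$, hence $a=0$. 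It follows that $B=\Phi(A)=\set{a\in A:\suppo(j_{G}(a))\subseteq S}$, and then normality of $S$ shows that every $h\in C_{c}(G;\E)$ with $\pi(\suppo(h))$ a bisection satisfies $hBh^{*}\subseteq B$ and $h^{*}Bh\subseteq B$; by Lemma~\ref{lem-bisec-span} these span a dense subalgebra of $A$, which gives both the regularity of $B$ and the normalizer containment in \ref{it:thm:May17:B:subset}. Hence $B$ is Cartan.

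Finally, for \ref{it:thm:May17:S:Omega} $\Leftrightarrow$ \ref{it:thm:May17:S:Omega+max}: both conditions contain \eqref{eq:thm:May17:icc} and the requirements that $S$ be closed and normal, and \eqref{eq:thm:May17:Int Omega=1} makes $S$ an open subgroupoid of $\Int[G]{\Iso{G}}$ with $\E_{S}$ abelian, so $S$ lies in the class over which \ref{it:thm:May17:S:Omega+max} maximizes. For $\Rightarrow$, if $S\subseteq S'$ with $S'$ in that class, then $\E_{S'}$ abelian and $\go\subseteq S\subseteq S'$ give $\operatorname{Ad}_{S}(e)\subseteq\operatorname{Ad}_{S'}(e)=\set e$ for every $e\in\E_{S'}$, so $\E_{S'}\subseteq\Omega_{S}\inv(\set 1)$; being open, $\E_{S'}\subseteq\Int[\E]{\Omega_{S}\inv(\set 1)}=\E_{S}$, so $S'=S$. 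For $\Leftarrow$, $S\cup\go$ is an open subgroupoid of $\Int[G]{\Iso{G}}$ (a bundle of groups together with the unit space), it is closed, normal, has $\E_{S\cup\go}$ abelian (its fibres are those of $\E_{S}$ over $S\z$ and copies of $\T$ elsewhere), and satisfies \eqref{eq:thm:May17:icc} since $\operatorname{Ad}_{S\cup\go}$ agrees with $\operatorname{Ad}_{S}$ over $S\z$ and is trivial elsewhere, whence $\Omega_{S\cup\go}\inv(\Z_{>1})\subseteq\Omega_{S}\inv(\Z_{>1})$; maximality then gives $\go\subseteq S$. In \eqref{eq:thm:May17:Int Omega=1} the inclusion $\E_{S}\subseteq\Int[\E]{\Omega_{S}\inv(\set 1)}$ is clear, and the reverse inclusion amounts to showing that if $\Int[\E]{\Omega_{S}\inv(\set 1)}$ were strictly larger than $\E_{S}$ then $S$ could be enlarged within the class of \ref{it:thm:May17:S:Omega+max}. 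This last point --- together with the appeal to Proposition~\ref{prop:max_new} in the analysis of maximal abelianness above --- is the main obstacle: because $\pi\bigl(\Int[\E]{\Omega_{S}\inv(\set 1)}\bigr)$ need not itself be abelian, one must pass, by a Zorn's-lemma argument that keeps track of clopenness, normality, and \eqref{eq:thm:May17:icc}, to a maximal abelian subgroupoid trapped between $S$ and the topological centralizer of $\E_{S}$ in $\Iso{\E}$. Establishing exactly this is where the genuine work lies; the remaining implications are then routine bookkeeping with the $j$-maps.
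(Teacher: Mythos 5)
Your overall architecture---the cycle \ref{it:thm:May17:B:subset}$\Rightarrow$\ref{it:thm:May17:B:cap}$\Rightarrow$\ref{it:thm:May17:S:Omega}$\Rightarrow$\ref{it:thm:May17:B:subset} plus \ref{it:thm:May17:S:Omega}$\Leftrightarrow$\ref{it:thm:May17:S:Omega+max}, with Proposition~\ref{prop:max_new} carrying the maximal-abelianness analysis and Lemma~\ref{lem-bisec-span} handling \ref{it:thm:May17:B:subset}$\Rightarrow$\ref{it:thm:May17:B:cap}---is the same as the paper's, which routes everything through Proposition~\ref{prop:max_new}, Corollary~\ref{cor:S max in terms of Omega}, Corollary~\ref{cor:Breg}, and Corollary~\ref{cor:regular if normal}. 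Two steps, however, do not go through as you describe. First, closedness of $S$: you assert that maximal abelianness forces $S$ to be closed, but Proposition~\ref{prop:max_new} \emph{assumes} $S$ clopen, so it cannot be used to produce closedness, and maximal abelianness alone does not yield it. In the paper, one first gets $S\z=\go$ from maximal abelianness via Corollary~\ref{cor:S max in terms of Omega}, and \emph{then} the third Cartan axiom (existence of a conditional expectation onto $B$) forces $S$ to be closed by \cite{BEFPR:2021:Intermediate}*{Lemma 3.4}; only after that may Proposition~\ref{prop:max_new} be invoked. Second, your normality argument for \ref{it:thm:May17:B:cap}$\Rightarrow$\ref{it:thm:May17:S:Omega} quantifies over all $h\in C_c(G;\E)$ with bisection support as though they were normalizers---that is hypothesis \ref{it:thm:May17:B:subset}, not \ref{it:thm:May17:B:cap}. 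Under \ref{it:thm:May17:B:cap} one must first produce, for each $e\in\E$, a normalizer $n$ with $\pi(\suppo(j_G(n)))$ a bisection and $j_G(n)(e)\neq 0$; this is exactly where the ``generates $A$'' clause is used (together with the fact that the sup norm is dominated by the reduced norm), and then the computation $j_G(nfn^*)(e\sigma e\inv)=|j_G(n)(e)|^2f(\sigma)\neq 0$ combined with Proposition~\ref{prop:support in S} gives $e\sigma e\inv\in\E_S$.

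The step you flag as ``where the genuine work lies''---that maximality of $S$ among open subgroupoids of $\Int[G]{\Iso{G}}$ with abelian twist implies $\Int[\E]{\Omega_S\inv(\set{1})}\subset\E_S$---is indeed left open in your proposal, but it needs neither Zorn's lemma nor a centralizer subgroupoid. The paper's Lemma~\ref{lem:S max implies} does it by a direct local construction: around a point of $\Int[\E]{\Omega_S\inv(\set{1})}$ choose an open bisection $Y\subset\Iso{G}$ all of whose $\pi$-preimages commute with $\E_S$; the subgroupoid $\langle\E_S,\pi\inv(Y)\rangle$ is abelian because any two composable elements of $\pi\inv(Y)$ differ by a central circle element ($Y$ being a bisection inside the isotropy), so the interior $T$ of its image in $G$ is an open subgroupoid of $\Int[G]{\Iso{G}}$ with $\E_T$ abelian containing $S$ and $Y$, and maximality forces $T=S$. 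Note also that the maximality in \ref{it:thm:May17:S:Omega+max} is only over the class ``open with abelian twist''---closedness, normality, and \eqref{eq:thm:May17:icc} are separate hypotheses---so the enlargement need not be checked against those conditions, which removes the bookkeeping you were worried about. Finally, be aware that your ``average over $\operatorname{Ad}_S(e_0)$'' heuristic for \eqref{eq:thm:May17:icc} is exactly what the technical Lemma~\ref{lem:fancy lemma} makes rigorous; it requires lower semicontinuity of the number of conjugates, a Baire category argument, and local trivializations of the twist to produce a \emph{continuous} family of conjugating bisections, so it cannot be dismissed as routine. With those repairs, the remaining implications in your outline do match the paper.
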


Note that the equivalence  \ref{it:thm:May17:B:subset}$\iff$\ref{it:thm:May17:S:Omega+max} in Theorem~\ref{thm:May17} is exactly the content of Theorem~\ref{thmA}.
Since Conditions~\eqref{eq:thm:May17:Int Omega=1} and~\eqref{eq:thm:May17:icc} might be difficult to parse, Section~\ref{sec:applications} contains  corollaries in which we list stronger, easier to understand properties of~$S$ that imply that $i(\Cst_{r}(S;\E_{S}))$ is a Cartan subalgebra.
Before embarking on the proof of the theorem, let us make some remarks. 

\begin{remark}
    By definition of $\Omega_{S}$, Condition~\eqref{eq:thm:May17:Int Omega=1} implies that $\E_{S}\subset \Iso{\E}$, so that $S\subset \Iso{G}$. In the setting of Item~\ref{it:thm:May17:S:Omega}, it therefore makes sense to ask for $S$ to be normal. 
\end{remark}

\begin{remark}\label{rmk:intG subgpd}
  It follows from Lemma~\ref{lem:int is sbgpd}\ref{it:int(H)}
  that $\Int[G]{\Iso{G}}$ is a subgroupoid of~$G$. Note further that, since $\pi$ is a groupoid homomorphism, $\E_{S}$ is
  subgroupoid of~$\E$. 
  Therefore it is reasonable to
  talk about $S$ being a maximal open subgroupoid \emph{of the interior of  $\Iso{G}$} for
  which $\E_{S}$ is abelian, as we have done in
  Item~\ref{it:thm:May17:S:Omega+max}.
\end{remark}

\begin{remark}[Regarding $\Omega_{S}$]
    The `gate keeper' $\Omega_{S}$ is assessing whether or not $\Iso{\E}$ has points $e$ that stand in the way of $B\coloneq i(\Cst_{r}(S;\E_{S}))$ being maximal abelian in $A\coloneq\Cst_{r} (G;\E)$ (see Proposition~\ref{prop:max_new}).

    By comparing the statements in~\ref{it:thm:May17:S:Omega} and \ref{it:thm:May17:S:Omega+max} of Theorem~\ref{thm:May17}, one can see that 
    Condition~\eqref{eq:thm:May17:Int Omega=1} 
    ensures that $B$ is maximal among the abelian subalgebras of $A$ that arise from open subgroupoids
    (see Corollary~\ref{cor:S max in terms of Omega})%
    , hence the name of the condition.
    On the other hand, in Condition~\eqref{eq:thm:May17:icc}, the map $\Omega_{S}$ discerns whether there are points $e\in\Iso{\E}$ for which
    \[
    1 < |\{\sigma\inv e\sigma:\sigma \in \E_{S}\}|= |\{s\inv \pi(e)s:s \in S\}|<\infty;
    \]
    i.e., points $e$ that do not have {\em relative infinite conjugacy class} (relative with respect to the subgroupoid $S$). The
    proof of Proposition~\ref{prop:max_new}  
    shows why an open set of such points stands
     in the way of $B$ being maximal
     (the key is the technical result in Lemma~\ref{lem:fancy lemma}).
     Let us briefly explain here why those points for which $\Omega_S(e)$ takes the value $\infty$, do not `pose a threat' to the maximality of $B$.
   
    There are two scenarios in which $\Omega_S(e)=\infty$. In the first, $\operatorname{Ad}_{S}(e)$  has cardinality larger than that of $\operatorname{Ad}_{S}(\pi (e ))$, so
    there exists $\sigma\in\E_{S}$ such that $\sigma\inv e\sigma \neq e$ but $\pi(\sigma\inv e\sigma)=\pi(e)$, i.e., $\sigma\inv e\sigma = z\cdot e$ for some $z\in \T\setminus\{1\}$. With this condition, any $h\in 
    C_c
    (G;\E)$ with $e\in \suppo(h)$ satisfies
    \[
    0
    \neq
    zh(e) = h(z\cdot e) =  h(\sigma\inv e\sigma).
    \]
    As $z\neq 1$, this proves $h(e)\neq h(\sigma\inv e\sigma)$. We will see in Lemma~\ref{lem:conjugation equality} that $h$ therefore does not commute with 
    $B$.
    In other words, there is no element of $A$ that both commutes with $B$ and does not vanish at $e$, and
     so the existence of $e$
    cannot be the cause of $B$ not being maximal abelian in $A$.

    The second scenario in which $\Omega_S(e)$ takes on the value $\infty$, is when $\operatorname{Ad}_S(\pi(e))$ (and hence $\operatorname{Ad}_S(e)$) is infinite. If $h\in
    C_c
    (G;\E)$ commutes with $
    B
    $, then the reverse implication of Lemma~\ref{lem:conjugation equality} states that $h(e)=h(e')$ for every $e'\in \operatorname{Ad}_S(e)$. But since the isotropy groups of $G$ are discrete and since  $\operatorname{Ad}_S(\pi(e))$ is infinite, this implies that $h(e)=0$. 
    So
    again, $e$ cannot be the cause of $B$ not being maximal abelian.
\end{remark}

\begin{remark}
    The relevance of the cardinality of~$S$-conjugates actually precedes even
  the results in \cite{DGNRW:Cartan}; for example, it can be found in
  \cite{BrOz:FinDimApp}*{p.\ 360} in the proof that the group von
  Neumann algebra of $\Z^2$ is maximal abelian in
  $L(\Z^2\rtimes \operatorname{SL}_{2}(\Z))$.
\end{remark}

\begin{remark}
Let $S$ and $G$ be as specified in Theorem~\ref{thm:May17}\ref{it:thm:May17:S:Omega+max}.
    According to \cite{ren:irms08}, the Cartan pair $i(\Cst_{r}(S;\E_{S})) \subset \Cst_{r}(G;\E)$  gives rise to a Weyl groupoid and Weyl twist, and one may wonder how it relates to the pair $S\subset G$.
    
    In the case that $\E$ is induced by a $2$-cocycle,  \cite{DGN:Weyl} identified the effective Weyl groupoid as a certain transformation groupoid and, assuming that the quotient map $G\to G/S$ allows a continuous section, the Weyl twist was identified as a certain $2$-cocycle. The recent pre-print \cite{BG:2023:Gamma-Cartan-pp} extends those $2$-cocycle results to the setting of $\Gamma$-Cartan pairs from subgroupoids of $\Gamma$-graded groupoids. 

    It is somewhat straight forward to prove that, in the setting of
    Theorem~\ref{thm:May17}\ref{it:thm:May17:S:Omega+max},
    the Weyl groupoid can likewise be identified as a transformation groupoid of a continuous action by the quotient groupoid $G/S$ on the Gelfand dual of $i(\Cst_{r}(S;\E_{S})) $. Moreover, this dual can be described in terms of $\E_{S}$,  along the same lines of what is done in \cite{DGN:Weyl}*{Section 3} 
    in the $2$-cocycle case.
    The Weyl twist, on the other hand, is more elusive. We deal with a detailed description of both the Weyl groupoid and the Weyl twist in a separate paper
    \cite{D:2025:DGNvsDWZ-inprep}; the interested reader can also find a related result in \cite{Renault:2023:Ext}*{Theorem 5.5}. In \cite{D:2025:DGNvsDWZ-inprep}, we also answer the question when a normal subgroupoid gives rise to a $\cs$-diagonal, i.e., a Cartan subalgebra with the unique extension property.
\end{remark}

\begin{remark}
    If the twist $\E$ is induced by a $2$-cocycle $c$ (see Example~\ref{ex:twist given by $2$-cocycle}), one could of course translate the equivalent conditions of
    Theorem~\ref{thm:May17}
    into conditions on $c$. As we have not found these conditions to be very enlightening, we refrain from doing so here. 
\end{remark}

Comparing 
Theorem~\ref{thm:May17}
to Renault's correspondence between Cartan pairs of $\Cst$-algebras and effective groupoids with twist (\cite{ren:irms08}*{Theorems 5.2 and 5.9}), has led us to notice the following alternative characterization of effectiveness.

   \begin{lemma}\label{lem:effectiveness}
       Suppose $G$ is a \LCH, \etale\ groupoid. The following are equivalent.
       \begin{enumerate}[label=\textup{(\roman*)}]
           \item $G$ is effective;
           \item $G$ contains no non-trivial open abelian subgroupoid.
       \end{enumerate}
   \end{lemma}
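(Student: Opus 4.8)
The plan is to prove the two contrapositives. For (i)$\Rightarrow$(ii), suppose $G$ is \emph{not} effective, i.e.\ $\Int[G]{\Iso G}\supsetneq \go$. I would argue that $\Int[G]{\Iso G}$ itself is an open subgroupoid of $G$ (this is exactly Lemma~\ref{lem:int is sbgpd}\ref{it:int(H)} applied to $H=\Iso G$), and it is contained in $\Iso G$, so it is a bundle of groups, but a bundle of groups need not be abelian. To repair this I would pass to a small piece: since $\Int[G]{\Iso G}$ is \etale\ and properly contains $\go$, pick $g\in\Int[G]{\Iso G}\setminus\go$ and an open bisection $U$ with $g\in U\subset\Int[G]{\Iso G}$; shrinking $U$ so that $U\cap\go=\emptyset$ and so that $r(U)$ is contained in a single... actually the cleanest route is: take $V$ an open bisection inside $\Int[G]{\Iso G}$ containing $g$, and consider the subgroupoid generated by $V\cup V\inv\cup\go$ restricted appropriately. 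The subtlety is producing an \emph{abelian} open subgroupoid, so instead I would directly take the interior of the \emph{center-like} set; concretely, let $u=r(g)=s(g)$, and use that on a neighbourhood one can choose $U$ with $r|_U,s|_U$ homeomorphisms onto a common open $W\subset\go$, giving a "local bisection of isotropy"; then $W\cup U\cup U\inv\cup\{u'^n : \dots\}$ — the group bundle this generates over $W$ has each fibre cyclic (generated by the single element in that fibre), hence abelian, and it is open because $U$ and $\go$ are open and the groupoid operations are open maps by \cite{Sims:gpds}*{Lemma 8.4.11}. Its fibre over $u$ contains $g\notin\go$, so it is non-trivial.

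For (ii)$\Rightarrow$(i), suppose $G$ contains a non-trivial open abelian subgroupoid $H$. Non-trivial means $H\neq\go$, so there is $g\in H\setminus\go$. Since $H$ is abelian, $H$ is a bundle of groups, hence $H\subset\Iso G$; and $H$ is open in $G$, so $g\in H\subset\Int[G]{\Iso G}$. As $g\notin\go$, this shows $\Int[G]{\Iso G}\neq\go$, i.e.\ $G$ is not effective. This direction is essentially immediate from the definitions, so the content is all in the first implication.

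The main obstacle is the construction in (i)$\Rightarrow$(ii): passing from the (possibly non-abelian) open group bundle $\Int[G]{\Iso G}$ to a genuinely \emph{abelian} open subgroupoid that still omits some point of $\go$. The key idea to make this work cleanly is that in an \etale\ groupoid every point of $\Iso G$ has a neighbourhood basis of open bisections $U$ with $r(U)=s(U)$ (after intersecting with $\Int[G]{\Iso G}$ these lie inside the isotropy interior); the subgroupoid generated by such a single $U$ together with $\go$ has every isotropy fibre monogenic, hence abelian, and openness follows because it is a union $\bigcup_{n\in\Z}U^{(n)}$ of open sets, where $U^{(n)}$ denotes the $n$-fold product (a bisection, open since multiplication and inversion are open maps on an \etale\ groupoid). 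I would spell out that $U^{(m)}U^{(n)}\subset U^{(m+n)}$ and $(U^{(n)})\inv=U^{(-n)}$ on the relevant units, so this union really is a subgroupoid, and it is non-trivial because $U^{(1)}=U$ contains a non-unit.
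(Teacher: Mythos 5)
Your proof is correct and follows essentially the same route as the paper: the easy direction is the same one-line observation that an open abelian subgroupoid lies in $\Int[G]{\Iso{G}}$, and for the converse both you and the paper take a single open bisection $U\subset \Int[G]{\Iso{G}}$ through a non-unit and use that the group bundle it generates has monogenic, hence abelian, fibres. The only cosmetic difference is that you verify openness of the generated subgroupoid directly as the union $\bigcup_{n}U^{(n)}$ of open bisections, whereas the paper passes to $\Int[G]{\langle V\rangle}$ and invokes Lemma~\ref{lem:int is sbgpd}; both are fine, though your final paragraph should replace the exploratory false starts earlier in the write-up.
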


   \begin{proof}
    If a subgroupoid $S$ is abelian, then it is contained in $\Iso{G}$. If it is open, it is contained in $\Int[G]{\Iso{G}}$. If $G$ is effective, the latter equals $G\z$, meaning $S=r(S)$ is trivial. 
    
    Conversely, assume $G$ 
    is non-effective. Since $G$ is \etale, this means we can find a non-empty open bisection $V\subset  G\setminus G\z$ such that $V\subset  \Iso{G}$. Since $V$ is a bisection, we have $V=\bigsqcup_{u\in r(V)} \{g_u\}$, where $V\cap G(u)=\{g_u\}$. Since the $g_u$'s do not interact with one another, the  subgroupoid 
     $\langle V\rangle$ of $G$ generated by $V$ is exactly the bundle $\bigsqcup_{u\in r(V)}\langle g_u\rangle$ of abelian groups generated by the individual $g_u$, so $\langle V\rangle$ is an abelian subgroupoid of $\Iso{G}$. Since $V$ is open, $S\coloneq\Int[G]{\langle V\rangle}$ contains $V$.
  By
  Lemma~\ref{lem:int is sbgpd}\ref{it:int(H)},
  $S$ is an open subgroupoid of $G$; as $\langle V\rangle$ is abelian, so is $S$; and as $\emptyset \neq V\subset  S\cap (G\setminus G\z)$, $S$ is non-trivial.
   \end{proof}

\subsection*{Standing Assumptions} 
For the remainder of this paper, $G$ will denote a \LCH, \etale\ groupoid; $S$ is an open subgroupoid of~$G$; and $\E$ is a twist over $G$ as in \eqref{eq:50}. We will invoke other hypotheses only as
necessary.

\subsection{The conditional expectation}

\begin{prop}[{%
cf.\ \cite{BEFPR:2021:Intermediate}*{Lemma 3.4}%
}]\label{prop:CondExp}
  Suppose  
  the subgroupoid 
 $S$ is not only open but also closed
 in $G$.
 Define
  $\Phi_0\colon C_c(G;\E) \to C_c(S;\E_{S})$ by
  $ \Phi_0(f) \coloneq f\restr{\E_{S}}$. 
  \begin{enumerate}[label=\textup{(\arabic*)}]
  \item\label{it:CondExp:Phi_0 extends} $\Phi_0$ extends to a linear map
    $ \Phi\colon \Cst_{r}(G;\E) \to \Cst_{r}(S;\E_{S})$.
  \item\label{it:CondExp:i circ Phi} The map $ i \circ\Phi$ is a conditional expectation from
    $\Cst_{r}(G;\E)$ to $ i (\Cst_{r}(S;\E_{S}))$.
  \item\label{it:CondExp:j-maps} We have
    $j_{S}(\Phi(a)) = j_G(a)\restr{\E_{S}}$ for all
    $a \in \Cst_{r}(G;\E)$.
  \item\label{it:CondExp:faithful} If $S\z=\go$, then $\Phi(a^*a)=0$ implies $a=0$, so
    $ i \circ\Phi$ is faithful.
  \end{enumerate}
\end{prop}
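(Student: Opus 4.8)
Following the lines of \cite{BEFPR:2021:Intermediate}*{Lemma~3.4}, I would establish the four statements in turn; \ref{it:CondExp:Phi_0 extends} and \ref{it:CondExp:faithful} carry the real content. \emph{Part~\ref{it:CondExp:Phi_0 extends}.} As $S$ is open and closed in $G$ and $\pi$ is continuous and onto, $\E_{S}=\pi\inv(S)$ is open and closed in $\E$; thus for $f\in C_{c}(G;\E)$ the restriction $f\restr{\E_{S}}$ is continuous, $\T$-equivariant, and supported in the compact set $\supp(f)\cap\E_{S}$, so $\Phi_{0}$ indeed maps into $C_{c}(S;\E_{S})$. To extend $\Phi_{0}$ I would show it is norm-decreasing for the reduced norms. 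Fix $u\in S\z$, put $\E_{S,u}=\set{e\in\E_{S}:s(e)=u}$, and identify (via extension by zero) $\ell^{2}(S_{u};\E_{S,u})$ with the closed subspace of $\ell^{2}(G_{u};\E_{u})$ of functions supported on $\E_{S,u}$, with associated orthogonal projection $P_{u}$. If $\xi$ lies in that subspace and $e'\in\E_{S,u}$, then in $(f*\xi)(e')$ each summand $f(e)\xi(e\inv e')$ with $\xi(e\inv e')\neq 0$ has $\pi(e\inv e')\in S$, hence $\pi(e)\in S$ since $\pi(e')\in S$ and $S$ is a subgroupoid; on such $e$ one has $f(e)=\Phi_{0}(f)(e)$, so that $\theta^{S}_{u}(\Phi_{0}(f))$ is exactly the compression $P_{u}\theta_{u}(f)P_{u}$. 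Hence $\norm{\theta^{S}_{u}(\Phi_{0}(f))}\leq\norm{\theta_{u}(f)}$, and taking the supremum over $u\in S\z\subseteq\go$ yields $\norm{\Phi_{0}(f)}_{r}\leq\norm{f}_{r}$, so $\Phi_{0}$ extends to a contraction $\Phi\colon\cs_{r}(G;\E)\to\cs_{r}(S;\E_{S})$.

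\emph{Parts~\ref{it:CondExp:j-maps} and~\ref{it:CondExp:i circ Phi}.} On $C_{c}(G;\E)$ the maps $j_{G},j_{S}$ are the canonical inclusions into the respective $C_{0}$-spaces and $\Phi=\Phi_{0}$ is restriction, so $j_{S}(\Phi(f))=f\restr{\E_{S}}=j_{G}(f)\restr{\E_{S}}$; since $j_{S}$, $\Phi$, $j_{G}$, and the restriction map $C_{0}(G;\E)\to C_{0}(S;\E_{S})$ (well defined because $\E_{S}$ is closed) are continuous and $C_{c}(G;\E)$ is dense, \ref{it:CondExp:j-maps} follows for every $a$. For \ref{it:CondExp:i circ Phi}: the inclusion $i$ is isometric and $\Phi$ is a contraction, so $i\circ\Phi$ is a contraction; moreover $\Phi_{0}(i(g))=g$ for $g\in C_{c}(S;\E_{S})$ (restriction of an extension by zero), so $\Phi\circ i=\operatorname{id}$ on $\cs_{r}(S;\E_{S})$ by density, and therefore $i\circ\Phi$ is an idempotent whose range is precisely $i(\cs_{r}(S;\E_{S}))$. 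A contractive idempotent onto a \cs-subalgebra is a conditional expectation by Tomiyama's theorem, which proves \ref{it:CondExp:i circ Phi}.

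\emph{Part~\ref{it:CondExp:faithful}.} Suppose $S\z=\go$ and $\Phi(a^{*}a)=0$. By \ref{it:CondExp:j-maps}, $j_{G}(a^{*}a)\restr{\E_{S}}=j_{S}(\Phi(a^{*}a))=0$, and since $\go=S\z\subseteq S$ gives $\E\z=\go\subseteq\E_{S}$, we get $j_{G}(a^{*}a)(u)=0$ for every unit $u$. Evaluating the formulas of Proposition~\ref{prop-bfpr} at $e'=u$, using $j_{G}(a^{*})(e)=\overline{j_{G}(a)(e\inv)}$ and $e\inv u=e\inv$ (valid because $r(e)=r(u)=u$ forces $s(e\inv)=u$), and reindexing $k=e\inv$, one obtains
\begin{equation*}
  0 = j_{G}(a^{*}a)(u) = \sum_{\set{\pi(k)\in G:s(k)=u}}\abs{j_{G}(a)(k)}^{2}\qquad\text{for every }u\in\go,
\end{equation*}
which forces $j_{G}(a)\equiv 0$ on $\E=\bigcup_{u\in\go}\E_{u}$. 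Finally, for each $u$ and each $\xi\in C_{c}(G_{u};\E_{u})$ the convolution $j_{G}(a)*\xi$ is a finite sum (Remark~\ref{rem-con-defined}) and coincides with $\theta_{u}(a)\xi$ (approximate $a$ by $C_{c}$-functions), so $\theta_{u}(a)\xi=0$; by density $\theta_{u}(a)=0$ for all $u$, hence $\norm{a}_{r}=0$ and $a=0$. Since $i$ is injective, $i(\Phi(a^{*}a))=0$ likewise implies $a=0$, so $i\circ\Phi$ is faithful.

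\emph{Where the difficulty lies.} The two steps requiring genuine care are the compression identity $\theta^{S}_{u}(\Phi_{0}(f))=P_{u}\theta_{u}(f)P_{u}$ in part~\ref{it:CondExp:Phi_0 extends}---where one must verify that the groupoid elements actually contributing to the convolution lie in~$S$---and, in part~\ref{it:CondExp:faithful}, the identification of $j_{G}(a^{*}a)$ on units with the squared $\ell^{2}$-norms of the ``columns'' of $j_{G}(a)$, together with the passage from $C_{c}(G;\E)$ to arbitrary $a$ by continuity of $j_{G}$. Tomiyama's theorem conveniently discharges positivity and the bimodule property in part~\ref{it:CondExp:i circ Phi}.
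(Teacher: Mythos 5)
Your proof is correct, and parts \ref{it:CondExp:Phi_0 extends}--\ref{it:CondExp:j-maps} follow essentially the same route as the paper: the paper also establishes the compression identity (phrased there as $P\,\theta^{G}_{u}(f)\circ F=F\circ\theta^{S}_{u}(f\restr{\E_{S}})$ for a unitary $F$ onto the range of the projection, with the same key observation that $\pi(e)\in S$ exactly when the other factor lies in $\E_{S}$), deduces contractivity, and then invokes Tomiyama via \cite{BrOz:FinDimApp}*{Theorem 1.5.10} for part \ref{it:CondExp:i circ Phi}. The only genuine divergence is in part \ref{it:CondExp:faithful}: the paper factors Renault's faithful conditional expectation $P_{G}\colon\Cst_{r}(G;\E)\to C_{0}(\go)$ of \cite{ren:irms08}*{Proposition 4.3} through $\Phi$ by a commuting triangle $P_{G}=P_{S}\circ\Phi$ (checked on $C_{c}$ and extended by continuity), whereas you compute $j_{G}(a^{*}a)$ on units directly, obtain $\sum_{s(k)=u}\abs{j_{G}(a)(k)}^{2}=0$, and then re-derive the injectivity of $j_{G}$ by identifying $\theta_{u}(a)\xi$ with $j_{G}(a)*\xi$. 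Both arguments are sound; the paper's is shorter because it outsources the $\ell^{2}$-column computation to the cited faithfulness of $P_{G}$, while yours is self-contained but essentially inlines the proof of that proposition (your last step could also be replaced by simply citing the injectivity of $j_{G}$, which the paper uses freely, e.g.\ in the proof of Proposition~\ref{prop:support in S}).
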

\begin{proof}
Both~\ref{it:CondExp:Phi_0 extends} and~\ref{it:CondExp:i circ Phi} are contained in \cite{BEFPR:2021:Intermediate}*{Lemma 3.4}.
For completion, we add a proof here. We will follow the ideas of the proof of \cite{DGNRW:Cartan}*{Proposition 3.13}. 

\ref{it:CondExp:Phi_0 extends} Because $S$ is closed and $\pi$ is continuous,
  $\pi\inv(S) = \E_{S}$ is closed, so that $\Phi_{0}$ is well defined.
  For $u\in \go$, let $H^{G}_{u}\coloneq \ell^2(G_{u};\E_{u})$ and
  $\theta^{G}_{u}\colon C_c(G;\E) \to B\left(H^{G}_{u}\right)$, the
  regular representations from~\eqref{eq:5} for $\E$; we analogously
  define $H^{S}_{u}\coloneq \ell^2(S_{u};(\E_{S})_{u})$ and
  $\theta^{S}_{u}$ for $u\in S\z$.  Let
  $P \in B\left(H^{G}_{u}\right)$ be the projection onto the subspace
  of elements that vanish outside of the (potentially empty) subset
  $\E_{S}\cap \E_{u}$. Note that there is a canonical unitary
  $F\colon H^{S}_{u} \to PH^{G}_{u}$. For $f\in C_c(G;\E)$ and
  $\xi\in H^{S}_{u}$, we claim that
  \begin{equation}\label{eq:theta-G and theta-S}
    P \theta^{G}_{u}(f) (F(\xi)) = F\left(
      \theta^{S}_{u}(f\restr{\E_{S}})(\xi)\right). 
  \end{equation}
  By construction, both sides are elements of $PH^{G}_{u}$, and so
  they vanish outside of $\E_{S}$. At $e\in \E_{S}$, we have
  \begin{align*} [P \theta^{G}_{u}(f) (F(\xi))](e) &=
    \sum_{\set{\pi(e_{1})\in G: r(e)=r(e_{1})}} f(ee_{1}\inv)\,
    F(\xi)(e_{1})
    \\
       &=
         \sum_{\set{\pi(e_{1})\in
         S:
         r(e)=r(e_{1})}}
         f(ee_{1}\inv)\,
         \xi(e_{1})
    \\
       &\overset{(\dagger)}{=}
         \sum_{\set{\pi(e_{1})\in
         G:
         r(e)=r(e_{1})}}
         f\restr{\E_{S}}(ee_{1}\inv)\,
         \xi(e_{1})
    \\
       &= [F\left(
         \theta^{S}_{u}(f\restr{\E_{S}})(\xi)\right)](e). 
  \end{align*}
  Here, $(\dagger)$ follows from the fact that $\pi(e_{1})\in S$ if
  and only if $ee_{1}\inv\in \E_{S}$, since $e\in \E_{S}$ and since
  $\E_{S}$ is a subgroupoid. 
  This proves~\eqref{eq:theta-G and theta-S}. It 
  follows that
\begin{align*}
    \norm{\theta^{S}_{u} (f\restr{\E_{S}})}
    &=
    \sup\left\{\norm{F\left(
      \theta^{S}_{u}(f\restr{\E_{S}})(\xi)\right)} : \xi\in H^{S}_{u},
      \norm{\xi}=1\right\}
      &&
      \text{since $F$ is a unitary}
    \\
    &\leq
    \sup\left\{\norm{\theta^{G}_{u}(f) (F(\xi))} : \xi\in H^{S}_{u},
      \norm{\xi}=1\right\}
      &&
      \text{by \eqref{eq:theta-G and theta-S}}
      \\
    &\leq
    \sup\left\{\norm{\theta^{G}_{u}(f) (\eta)} : \eta\in H^{G}_{u},
      \norm{\eta}=1\right\} 
    =
    \norm{\theta^{G}_{u}(f)}.
\end{align*}
Since $u\in S\z\subset  \go$ was arbitrary, this shows that
$\norm{\Phi_{0}(f)}_{r}\leq \norm{f}_{r}$. 
Thus, $\|\Phi_0\| \leq 1$, so that $\Phi_0$ extends to a continuous
linear map $\Phi\colon \Cst_{r}(G;\E) \to
\Cst_{r}(S;\E_{S})$.

\ref{it:CondExp:i circ Phi}
Clearly, $\Phi_0\circ i $ is the identity on $C_c(S;\E_{S})$, so
that $ i \circ \Phi\circ i = i $. It follows that
$\mathmbox{i\circ\Phi}
\colon \Cst_r(G;\E)\to\Cst_r(G;\E)$ maps onto and
fixes the subalgebra $ i (\Cst_r(S;\E_{S}))$ of
$\Cst_r(G;\E)$. Since $ i \circ\Phi$ is furthermore contractive
and linear, it follows from \cite{BrOz:FinDimApp}*{Theorem 1.5.10}
that it is a conditional expectation.
       
\ref{it:CondExp:j-maps}
To see that $j_{S}(\Phi(a)) = j_G(a)\restr{\E_{S}}$ for all
$a \in \Cst_{r}(G;\E)$, note that the claim is trivial for
$a\in C_c(G;\E)$: in this case, $\Phi(a)=a\restr{\E_{S}}$, and the
maps $j_{G}$ and $j_{S}$ are just the inclusions of $C_c(G;\E)$
and $C_c(S;\E_{S})$ into $C_0(G;\E)$ resp.\
$C_0(S;\E_{S})$. Continuity of all maps involved implies the
claim
for arbitrary $a$.

\ref{it:CondExp:faithful}
Assume $S\z=\go$. To see that $\Phi$ is faithful, recall that
\cite{ren:irms08}*{Proposition 4.3} says that the map
$C_c(G;\E)\to C_0 (\go), f\mapsto f\restr{\go}$, extends to a faithful
conditional expectation $P_{G}\colon \Cst_r(G;\E)\to C_0 (\go)$.
The  diagram
    \[
    \begin{tikzcd}
        \Cst_r(G;\E) \ar[rr, "P_{G}"] \ar[rd, "\Phi"'] && C_0 (\go)
        \\
        & \Cst_{r}(S;\E_{S}) \ar[ru, "P_{S}"'] &
    \end{tikzcd}
    \]
    commutes since it commutes on the dense subspace $C_{c}
    (G;\E)$.
    If $\Phi(a^*a)=0$, then commutativity implies $P_{G}(a^*a)=0$, so
    $a=0$ since $P_{G}$ is faithful. 
    \end{proof}

    The following lemma explains under which circumstances an element
    of the large algebra $\Cst _r (G;\E)$ is actually in the
    subalgebra $ i (\Cst_r(S;\E_{S}))$. It is a generalization of
    \cite{DGNRW:Cartan}*{Lemma 3.8}, and it should also be compared to
    spectral theorems for modules such as \cite{MS:1989:Subalgs}*{Theorem 3.10}, \cite{Qiu:1991:Triangular}*{p.\ 4},
    \cite{Hopenwasser:2005:Spectral}.

\begin{prop}[cf.\ {\cite{DGNRW:Cartan}*{Lemma
    3.8}}]\label{prop:support in S} 
  We have 
  \[ i (\Cst_{r}(S;\E_{S})) \subset  \set{b \in \Cst_{r}(G;\E):
  \suppo(j_G(b)) \subset  \E_{S}}.
  \]
  If $S$ is not only open but also closed, then we have equality.
\end{prop}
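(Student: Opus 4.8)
The plan is to prove the two statements separately: first the inclusion for an arbitrary open subgroupoid $S$, and then the reverse inclusion under the additional hypothesis that $S$ is closed.

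For the inclusion $i(\Cst_r(S;\E_S)) \subset \set{b : \suppo(j_G(b)) \subset \E_S}$, I would argue by continuity and density. On the dense subalgebra $C_c(S;\E_S)$, the inclusion $i$ is just ``extension by zero'' from $\E_S$ to all of $\E$, so for $f \in C_c(S;\E_S)$ we have $j_G(i(f)) = i(f)$, which vanishes off $\E_S$; hence $\suppo(j_G(i(f))) \subset \E_S$. For general $b \in i(\Cst_r(S;\E_S))$, write $b = \lim_n i(f_n)$ with $f_n \in C_c(S;\E_S)$. Then $j_G(b) = \lim_n j_G(i(f_n))$ uniformly on $\E$, since $j_G$ is norm-decreasing (Proposition~\ref{prop-bfpr}), and each $j_G(i(f_n))$ vanishes off $\E_S$. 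A uniform limit of functions vanishing off $\E_S$ also vanishes off $\E_S$, so $j_G(b)$ vanishes on the complement of $\E_S$, i.e.\ $\suppo(j_G(b)) \subset \E_S$. (Here $\suppo$ is the open support, the set where the function is nonzero, so this is immediate; no closure subtlety arises.)

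For the reverse inclusion when $S$ is additionally closed, suppose $b \in \Cst_r(G;\E)$ with $\suppo(j_G(b)) \subset \E_S$. Since $S$ is closed, Proposition~\ref{prop:CondExp} applies: we have the map $\Phi\colon \Cst_r(G;\E) \to \Cst_r(S;\E_S)$ with $i\circ\Phi$ a conditional expectation onto $i(\Cst_r(S;\E_S))$, and crucially part~\ref{it:CondExp:j-maps} gives $j_S(\Phi(b)) = j_G(b)\restr{\E_S}$. I claim $b = i(\Phi(b))$, which lands in $i(\Cst_r(S;\E_S))$ and finishes the proof. To see this, it suffices to show $j_G(b - i(\Phi(b))) = 0$, because $j_G$ is faithful on positive elements — more precisely, I would use that $j_G$ separates points in the sense that $j_G(a) = 0$ implies $a = 0$; this follows since for $a\in \Cst_r(G;\E)$, $j_G(a^*a)(r(e)) = \sum |j_G(a)(e')|^2$-type identities from \eqref{eq:7} force $j_G(a^*a)\restr{\go}=0$ when $j_G(a)=0$, and $f\mapsto f\restr{\go}$ extends to the faithful expectation $P_G$ of \cite{ren:irms08}*{Proposition 4.3}. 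Now using Lemma~\ref{lem:j-map commutes}, which says the square relating $j_S$, $j_G$ and the inclusions $i$ commutes, we get $j_G(i(\Phi(b))) = i(j_S(\Phi(b))) = i(j_G(b)\restr{\E_S})$, where the last $i$ is again extension by zero. But $j_G(b)\restr{\E_S}$ extended by zero to $\E$ is exactly $j_G(b)$, since by hypothesis $j_G(b)$ already vanishes off $\E_S$. Hence $j_G(i(\Phi(b))) = j_G(b)$, so $j_G(b - i(\Phi(b))) = 0$ and therefore $b = i(\Phi(b))$.

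The main obstacle I anticipate is justifying faithfulness of $j_G$ in the sense needed (that $j_G(a)=0 \Rightarrow a=0$); the cleanest route is to invoke the faithful conditional expectation $P_G\colon \Cst_r(G;\E)\to C_0(\go)$ from \cite{ren:irms08}*{Proposition 4.3} together with the identity $P_G(a) = j_G(a)\restr{\go}$ (valid on $C_c(G;\E)$ hence everywhere by continuity) and the multiplicativity relation \eqref{eq:7} applied to $a^*a$, which expresses $j_G(a^*a)\restr{\go}$ as a sum of $|j_G(a)(\cdot)|^2$. Everything else is a routine chase through the commuting diagram of Lemma~\ref{lem:j-map commutes} and the properties of $\Phi$ from Proposition~\ref{prop:CondExp}.
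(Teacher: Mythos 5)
Your proposal is correct and follows essentially the same route as the paper: density plus continuity of $j_G$ for the first inclusion, and the chain $j_G(b)=i'(j_G(b)\restr{\E_{S}})=i'(j_{S}(\Phi(b)))=j_G(i(\Phi(b)))$ together with injectivity of $j_G$ for the reverse one. The only difference is that you spell out why $j_G$ is injective (via the faithful expectation $P_G$), which the paper simply takes as known; your justification is valid.
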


\begin{remark}\label{rmk:G amenable}
  If $G$ is amenable, then the assumption in Proposition~\ref{prop:support in S} that $S$ be closed is not needed to conclude equality of the sets; see
  \cite{BEFPR:2024:Corrigendum}.
     Our proof, however, heavily
  relies on the conditional expectation
  constructed in Proposition~\ref{prop:CondExp} which, by \cite{BEFPR:2021:Intermediate}*{Lemma 3.4},
  exists if and only if $S$ is closed.
\end{remark}

\begin{proof}
  First, note that
  $B\coloneq\set{b \in \Cst_{r}(G;\E): \suppo(j_{G}(b)) \subset 
  \E_{S}}$ is closed: if $\sset{b_n}$ is a sequence in $B$
  converging to $b \in \Cst_{r}(G;\E)$, and if
  $e \in
  \E\setminus\E_{S}$, 
  then continuity of $j_{G}$ implies that
  $0=j_G(b_n)(e)\to j_G(b)(e)$, so $j_G(b)(e) = 0$.

  For $f \in C_c(S; \E_{S})$,
  $\suppo(j_{G}(i(f))) = \suppo(f)$ is a subset of $\E_{S}$, so
  $i(C_c(S;\E_{S})) \subset  B$. Since $B$ is closed, since
  $i$ is continuous, and since $C_c(S;\E_{S})$ is dense in
  $ \Cst_{r}(S;\E_{S})$, we conclude
  $i(\Cst_{r}(S;\E_{S})) \subset  B$.

  Now fix $b \in B$
  and assume that $S$ is clopen, so that
  we can consider the (not
  necessarily faithful) map
  $ \Phi\colon \Cst_{r}(G;\E) \to \Cst_{r}(S;\E_{S})$ from
  Proposition~\ref{prop:CondExp}. If
  $i'\colon C_0(S;\E_{S})\to C_0(G;\E)$ denotes the
  inclusion as in Lemma~\ref{lem:j-map commutes},
  then 
  we have
\begin{align*}
   j_G(b)
   &=
   i'(j_G(b)\restr{\E_{S}})
   &&\text{since $\suppo(j_G(b))\subset  \E_{S}$}
   \\
   &= i'(j_{S}(\Phi(b)))
   &&\text{by Proposition~\ref{prop:CondExp}\ref{it:CondExp:j-maps}} 
   \\ 
   &= j_G(i(\Phi(b)))
   &&\text{by Lemma~\ref{lem:j-map commutes}}.
\end{align*}
Since $j_G$ is injective, we conclude $b = i(\Phi(b))\in
i(\Cst_{r}(S;\E_{S}))$.    
\end{proof}

\subsection{Maximality of $B$}

Our next focus is maximality of $B$.
The main result of this section is the following proposition. 
The ideas of the proof generalize those found in \cite{DGNRW:Cartan}*{Section 7}.

\begin{prop}\label{prop:max_new}
  Assume that
  $S$ is not only open 
  but also closed
  in $G$.
  Then the following statements
  are equivalent.
  \begin{enumerate}[label=\textup{(\roman*)}]
  \item\label{it:prop:max_new:B} $i(\Cst_{r}(S;\E_{S}))$ is maximal abelian in $\Cst_{r}(G;\E)$.
  \item\label{it:prop:max:S_April23}  
  $S$ satisfies Conditions~\eqref{eq:thm:May17:Int Omega=1} and ~\eqref{eq:thm:May17:icc} of
  Theorem~\ref{thm:May17}%
  , that is,
  \repeatable{both-conditions}{%
  \begin{enumerate}[wide = 6pt, widest = {\bfseries9999}, leftmargin =*]
      \item[\eqref{eq:thm:May17:Int Omega=1}]$\go\subset
      \Int[\E]{\Omega_{S}\inv(\set{1})}=\E_{S}$ and
      \item[\eqref{eq:thm:May17:icc}]
        $\Int[\E]{\Omega_{S}\inv (\Z_{>1})}=\emptyset$.
    \end{enumerate}%
    }%
  \item\label{it:prop:max:S in terms of Omega + maximal} $S$ is maximal among open subgroupoids of $\Iso{G}$ for which $\E_{S}$ is abelian,  and $S$ satisfies Condition~\eqref{eq:thm:May17:icc}. %
  \end{enumerate} 
\end{prop}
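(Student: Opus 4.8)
The plan is to prove \ref{it:prop:max_new:B}$\Leftrightarrow$\ref{it:prop:max:S_April23} (the operator‑algebraic content) and \ref{it:prop:max:S_April23}$\Leftrightarrow$\ref{it:prop:max:S in terms of Omega + maximal} (pure groupoid theory) separately. Two reductions are used throughout: $B\coloneq i(\Cst_{r}(S;\E_{S}))$ is abelian iff $\E_{S}$ is a bundle of abelian groups (Example~\ref{ex-abelian}); and, since $S$ is clopen, Proposition~\ref{prop:support in S} identifies $B$ with $\set{b\in\Cst_{r}(G;\E):\suppo(j_{G}(b))\subset\E_{S}}$, so $B$ is maximal abelian exactly when $B$ is abelian and every $a\in\Cst_{r}(G;\E)$ commuting with $B$ satisfies $\suppo(j_{G}(a))\subset\E_{S}$. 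I would first establish two auxiliary lemmas (those referred to in the remarks after Theorem~\ref{thm:May17}): a \emph{conjugation lemma}, that $a$ commutes with $B$ iff $\suppo(j_{G}(a))\subset\set{e\in\Iso{\E}:r(e)\in S\z}$ and $j_{G}(a)(e)=j_{G}(a)(\tau\inv e\tau)$ for all such $e$ and all $\tau\in\E_{S}(r(e))$ --- proved by pairing the convolution identity of Proposition~\ref{prop-bfpr} with bump functions in $C_{c}(S;\E_{S})$ concentrated near $\tau$ --- and a \emph{rigidity lemma} (the analogue of \cite{DGNRW:Cartan}*{Section~7}) asserting that, for $a$ commuting with $B$, the map $\Omega_{S}$ omits the value $\infty$ and is locally constant on $\suppo(j_{G}(a))$, while conversely any open $V$ with $\pi(V)$ a bisection and $V\subset\Omega_{S}\inv(\Z_{>1})$ supports, after shrinking so that the finitely many $S$-conjugates of its points move over continuously varying bisections, a nonzero element of $\Cst_{r}(G;\E)$ commuting with $B$ whose support meets $V$.

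For \ref{it:prop:max:S_April23}$\Rightarrow$\ref{it:prop:max_new:B}: Condition~\eqref{eq:thm:May17:Int Omega=1} gives that $\E_{S}$ is abelian (so $B$ is abelian) and that $\go\subset S$. Let $a$ commute with $B$; commutation with $C_{0}(\go)\subset B$ forces $\suppo(j_{G}(a))\subset\Iso{\E}$. If some $e\in\suppo(j_{G}(a))$ had $\Omega_{S}(e)=\infty$, then either $\pi$ is not injective on $\operatorname{Ad}_{S}(e)$, so two of its points differ by a nontrivial element of $\T$ and constancy of $j_{G}(a)$ on $\operatorname{Ad}_{S}(e)$ (conjugation lemma) forces $j_{G}(a)(e)=0$; or $\operatorname{Ad}_{S}(\pi(e))$ is infinite, so $j_{G}(a)$ takes a fixed nonzero value at infinitely many points of the discrete, closed isotropy group $G(r(e))$, contradicting $j_{G}(a)\in C_{0}(G;\E)$. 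Hence $\Omega_{S}<\infty$ on $\suppo(j_{G}(a))$, and by the rigidity lemma each such $e$ has an open neighbourhood on which $\Omega_{S}$ is constantly $\Omega_{S}(e)\geq1$. If $\Omega_{S}(e)>1$ that neighbourhood violates \eqref{eq:thm:May17:icc}; so $\Omega_{S}\equiv1$ near $e$, whence $e\in\Int[\E]{\Omega_{S}\inv(\set{1})}=\E_{S}$ by \eqref{eq:thm:May17:Int Omega=1}. Thus $\suppo(j_{G}(a))\subset\E_{S}$ and $a\in B$.

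For \ref{it:prop:max_new:B}$\Rightarrow$\ref{it:prop:max:S_April23}: maximality forces $B$ abelian, hence $\E_{S}$ abelian. If $\go\not\subset S$, pick $u\in\go\setminus S\z$; since $S\z$ is clopen in $\go$, Lemma~\ref{lem:Urysohn} provides a nonzero $h\in C_{c}(G;\E)$ with $\suppo(h)\subset\pi\inv(\go\setminus S\z)$, and a short computation gives $h\ast b=b\ast h=0$ for all $b\in C_{c}(S;\E_{S})$; then $h$ is a normal element outside $B$ commuting with $B$, contradicting maximality, so $\go\subset S$. Now $\E_{S}$, being open and abelian, lies inside $\Int[\E]{\Omega_{S}\inv(\set{1})}$; were the inclusion proper, a function supported on a small bisection contained in $\Omega_{S}\inv(\set{1})\setminus\E_{S}$ would commute with $B$ by the conjugation lemma (the conjugation condition being vacuous on $\Omega_{S}\inv(\set{1})$), be normal, and lie outside $B$ --- again impossible; this proves \eqref{eq:thm:May17:Int Omega=1}. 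Finally, if \eqref{eq:thm:May17:icc} failed, the constructive half of the rigidity lemma would yield an element commuting with $B$ but not in $B$; hence \eqref{eq:thm:May17:icc} holds.

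For \ref{it:prop:max:S_April23}$\Leftrightarrow$\ref{it:prop:max:S in terms of Omega + maximal}: both conditions contain \eqref{eq:thm:May17:icc}, so it suffices to show that \eqref{eq:thm:May17:Int Omega=1} is equivalent to ``$S$ is maximal among open subgroupoids of $\Iso{G}$ whose associated twist is abelian''. Here $\Omega_{S}\inv(\set{1})$ is precisely the ($\T$-saturated) subgroupoid of those $e\in\Iso{\E}$ with $r(e)=s(e)\in S\z$ centralising $\E_{S}(r(e))$. If \eqref{eq:thm:May17:Int Omega=1} holds and $T\supseteq S$ is an open subgroupoid of $\Iso{G}$ with $\E_{T}$ abelian, then (using $\go\subset S$) $\E_{T}\subset\Omega_{S}\inv(\set{1})$, and openness of $\E_{T}$ gives $\E_{T}\subset\Int[\E]{\Omega_{S}\inv(\set{1})}=\E_{S}$, so $T=S$; since \eqref{eq:thm:May17:Int Omega=1} also makes $\E_{S}$ abelian, $S$ is indeed maximal. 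Conversely, assume maximality. Then $S\cup\go$ is an open subgroupoid of $\Iso{G}$ with abelian twist containing $S$, forcing $\go\subset S$; and if $e_{0}\in\Int[\E]{\Omega_{S}\inv(\set{1})}\setminus\E_{S}$, choosing an open $V\ni e_{0}$ inside $\Omega_{S}\inv(\set{1})$ with $\pi(V)=:T_{0}$ a bisection disjoint from $S$, one checks that $T\coloneq\langle S\cup T_{0}\rangle$ is an open subgroupoid of $\Iso{G}$ properly containing $S$ whose twist is abelian --- over each unit $\E_{T}$ is generated by an abelian $\E_{S}(u)$ together with at most one element centralising it, and openness follows because multiplication in $G$ is open --- contradicting maximality; so \eqref{eq:thm:May17:Int Omega=1} holds. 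I expect the rigidity lemma to be the real obstacle: the local constancy of $\Omega_{S}$ on $\suppo(j_{G}(a))$ and, dually, the continuous straightening of finite $S$-conjugacy classes over a small bisection are where the \etale/Hausdorff hypotheses and the centrality relation \eqref{eq:51} of the twist enter essentially; the remaining steps (the $\infty$-cases, the $\go\subset S$ reductions, and the groupoid-theoretic equivalence) are comparatively routine once it is in place.
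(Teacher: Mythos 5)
Your architecture matches the paper's almost exactly: the reduction via Example~\ref{ex-abelian} and Proposition~\ref{prop:support in S}, the conjugation lemma (the paper's Lemma~\ref{lem:conjugation equality}, combined with \cite{ren:irms08}*{Theorem 4.2(a)} for the support-in-isotropy claim, which is only valid once $S\z=\go$ is known), the two $\Omega_{S}=\infty$ cases handled by $\T$-equivariance and $C_{0}$-decay, and the groupoid-theoretic equivalence of \eqref{eq:thm:May17:Int Omega=1} with maximality of $S$ (the paper's Lemma~\ref{lem:S max implies} and Corollary~\ref{cor:S max in terms of Omega}). Your route to \eqref{eq:thm:May17:Int Omega=1} from maximal abelianness --- a single normal commuting element supported over a bisection inside $\Int[\E]{\Omega_{S}\inv(\set{1})}\setminus\E_{S}$ --- is a mild variant of the paper's argument, which instead exhibits a strictly larger commutative subalgebra $i(\Cst_{r}(T;\E_{T}))$; both work.

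The genuine gap is the ``rigidity lemma,'' which you state but do not prove and which carries essentially all of the difficulty: it is the paper's technical Lemma~\ref{lem:fancy lemma} together with two preparatory claims. Two concrete problems. First, your formulation ``$\Omega_{S}$ is locally constant on $\suppo(j_{G}(a))$'' is too strong: the function $u\mapsto|\operatorname{Ad}_{S}(g_{0}(u))|$ is only \emph{lower semicontinuous} (realizing each conjugate as $s_{i}(u)\inv g_{0}(u)s_{i}(u)$ over a shrunken neighbourhood shows the count cannot drop nearby, but it can jump up), and what one actually obtains, via a Baire-category argument on the closed sets $C_{n}=\set{u: |\operatorname{Ad}_{S}(g_{0}(u))|\le n+1}$, is constancy on \emph{some} nonempty open subset, not near every point. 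That weaker statement does suffice: applied to $X=\suppo(j_{G}(a))\setminus\E_{S}$ it yields a nonempty open set on which $|\operatorname{Ad}_{S}|\equiv n+1$; the case $n=0$ contradicts \eqref{eq:thm:May17:Int Omega=1} since $X\cap\E_{S}=\emptyset$, and $n\ge1$ contradicts \eqref{eq:thm:May17:icc}. Second, the constructive half --- producing a commuting element from an open subset of $\Omega_{S}\inv(\Z_{>1})$ --- is where the real work lies: after the same Baire argument one must spread the $n+1$ conjugates over pairwise disjoint bisections $V_{0},\dots,V_{n}$, verify that conjugation by \emph{any} $\sigma\in\E_{S}$ permutes the $\pi\inv(V_{i})$ according to a partition $S(u)=\bigsqcup_{i}S_{i}(u)$, and choose local trivialisations of the twist so that the transported function $h=\sum_{i}h_{0}\circ\Psi_{i}$ is $\T$-equivariant and satisfies $h(e)=h(\sigma\inv e\sigma)$ for \emph{all} composable $e$, not only those in the support. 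None of this is automatic; the centrality of the extension and the injectivity of $\pi$ on $\operatorname{Ad}_{S}(e)$ built into the definition of $\Omega_{S}$ enter essentially. As written, the proposal is a correct outline whose decisive lemma is left unproved and slightly misstated.
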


\begin{remark}
    If $\Int[G]{\Iso{G}}$ is abelian, then it was shown in \cite{CRST:2021:Reconstruction}*{Corollary 5.4} (see also \cite{CDRT:2024:CRSTcorrigendum-pp}) that $i(\cs_r(\Int[G]{\Iso{G}}))$ is maximal abelian in $\cs_{r}(G)$. In light of this, it can be conjectured that Proposition~\ref{prop:max_new} also holds if $S$ is merely assumed to be closed in $\Int[G]{\Iso{G}}$ rather than in $G$. 
    However, we assume $S$ to be closed in $G$ because our proof makes use of the set-equality
    in Proposition~\ref{prop:support in S}.
\end{remark}

For the proof of Proposition~\ref{prop:max_new}, we will need
some preliminary results.
Firstly, we have the following on the groupoid level,
which should be compared to \cite{DGNRW:Cartan}*{Lemma 3.1} and \cite{BG:2023:Gamma-Cartan-pp}*{Proposition 3.4}.

  \begin{lemma}\label{lem:S max implies}
     Assume that $S$ is
      maximal among open subgroupoids of $\Int[G]{\Iso{G}}$ for which
      $\E_{S}$ is abelian. 
      If $X\subset \Iso{G}$ is open and for each $e\in \pi\inv(X)$ and $\sigma\in \E_{S}(r(e))$, we have $e\sigma=\sigma e$, then $X\subset S$.
  \end{lemma}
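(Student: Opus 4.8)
The plan is to reduce to the case where $X$ is a single open bisection and then feed an enlargement of $\E_S$ into the maximality hypothesis. So first I fix $g\in X$; since $G$ is \etale\ and $X$ is open, I may pick an open bisection $V\subseteq X$ with $g\in V$, and note $V\subseteq\Iso{G}$. It suffices to show $V\subseteq S$. The idea is to adjoin the $\T$-saturated open set $\pi\inv(V)$ to $\E_S$, show the subgroupoid so generated is an abelian twist over an open subgroupoid of $\Int[G]{\Iso{G}}$ containing $S$, and conclude by maximality of $S$ that this subgroupoid is $\E_S$ itself, so that $\pi\inv(V)\subseteq\E_S$ and hence $V\subseteq S$.

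Concretely I would set $A=\E_S\cup\pi\inv(V)\cup(\pi\inv(V))\inv$, which is a $\T$-invariant, self-inverse, open subset of $\Iso{\E}$ (using that $\pi$ is continuous, $V$ and $S$ are open, and inversion is a homeomorphism), and let $\mathcal F=\langle A\rangle=\bigcup_{n\ge1}A^n$ be the subgroupoid of $\E$ it generates. Since the range map of $\E$ is open (it factors as $r_G\circ\pi$), multiplication on $\E$ is open by \cite{Sims:gpds}*{Lemma~8.4.11}, so $\mathcal F$ is open; moreover $\mathcal F\subseteq\Iso{\E}$ and $\mathcal F$ is $\T$-invariant. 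The key point is that $\mathcal F$ is a bundle of abelian groups, which I would check fibrewise: over a unit $u$, $\mathcal F(u)$ is the subgroup of $\E(u)$ generated by $\E_S(u)$ together with $\pi\inv(V(u))$. If $V(u)=\emptyset$ this is just $\E_S(u)$, which is abelian. If $V(u)=\{h\}$, I pick any lift $\tilde h$; since $\tilde h\in\pi\inv(X)$ and $r(\tilde h)=u$, the hypothesis gives $\tilde h\sigma=\sigma\tilde h$ for all $\sigma\in\E_S(u)$, and since $\pi\inv(V(u))=\T\cdot\tilde h$ one gets $\iota(u,z)=(\iota(u,z)\tilde h)\tilde h\inv\in\mathcal F(u)$, so that $\mathcal F(u)=\langle N_u,\tilde h\rangle$ with $N_u:=\langle\E_S(u)\cup\iota(u,\T)\rangle$ abelian (as $\E_S$ is abelian and $\iota(u,\T)$ is central) and with $\tilde h$ commuting with all of $N_u$. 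The elementary fact that a group generated by an abelian subgroup $N$ and an element commuting with $N$ is abelian — its elements all have the form $\tilde h^{k}\nu$, $\nu\in N_u$, and $\tilde h^{k}\nu\cdot\tilde h^{l}\mu=\tilde h^{k+l}\nu\mu$ — then shows $\mathcal F(u)$ is abelian.

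Finally I would identify $\mathcal F$ as $\E_{S'}$ for $S':=\pi(\mathcal F)$. Since $\pi$ is an open groupoid homomorphism, $S'$ is an open subgroupoid of $G$; since $\mathcal F\subseteq\Iso{\E}$, $S'\subseteq\pi(\Iso{\E})=\Iso{G}$, whence $S'\subseteq\Int[G]{\Iso{G}}$; and $S'\supseteq\pi(\E_S)=S$ as well as $S'\supseteq\pi(\pi\inv(V))=V$. Because $\mathcal F$ is $\T$-invariant and $\mathcal F(u)\supseteq\iota(u,\T)$ for every $u\in(S')\z$ (the computation above when $V(u)\neq\emptyset$, and because $\iota(u,\T)\subseteq\E_S(u)$ when $u\in S\z$), we have $\mathcal F=\pi\inv(\pi(\mathcal F))=\E_{S'}$, so $\E_{S'}$ is abelian. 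Maximality of $S$ among open subgroupoids of $\Int[G]{\Iso{G}}$ with abelian twist then forces $S'=S$, hence $V\subseteq S$; as $g\in X$ was arbitrary, $X\subseteq S$. The only point demanding care — more bookkeeping than a real obstacle — is the interaction of the twist with the generation process: one must generate inside $\E$ (rather than forming $\langle S(u)\cup V(u)\rangle$ in $G$ and pulling back) so as to see directly that $\mathcal F$ is $\T$-saturated with full circle fibres and abelian, the commutation identity $\tilde h\sigma=\sigma\tilde h$ being exactly what upgrades "central extension of an abelian group" to "abelian".
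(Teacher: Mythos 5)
Your proof is correct and follows essentially the same route as the paper's: both adjoin $\pi\inv$ of an open bisection inside $X$ to $\E_S$, verify that the subgroupoid of $\Iso{\E}$ so generated is abelian using the commutation hypothesis together with centrality of the $\T$-action, and then invoke maximality of $S$ applied to the image under $\pi$. The only (harmless) difference is that you check directly that the generated subgroupoid is open and $\T$-saturated, whereas the paper instead passes to the interior of its $\pi$-image and appeals to Lemma~\ref{lem:int is sbgpd}.
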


  \begin{proof}
    We adapt the proof of \cite{BG:2023:Gamma-Cartan-pp}*{Proposition 3.4} to the case of general twists. Fix $e\in \pi\inv(X)$. Since $G$ is \etale\ and $X$ is open, there exists an open bisection $Y\subset X$ around $\pi(e)$. Let $\mathcal{F}\coloneq \langle \E_{S}, \pi\inv(Y)\rangle$ be the subgroupoid of $\Iso{\E}$ generated by $\E_{S}$ and $\pi\inv(Y)$. By assumption on $X$ and since $Y\subset X$, every element of $\pi\inv(Y)$ commutes with every element of $\E_{S}$. Moreover, if $\tau_{1},\tau_{2}\in \pi\inv(Y)$ are composable, then since $Y$ is a bisection contained in $\Iso{G}$, there exists $z\in\T $ such that $\tau_{1}=z\cdot \tau_{2}$; since the $\T$-action is central, this means that $\tau_{1}$ and $\tau_{2}$ commute. Since $\E_{S}$ is abelian by assumption, it follows that $\mathcal{F}$ is an abelian subgroupoid of $\E$. 

    Since $(\pi(e_{1}),\pi(e_{2}))\in G\comp $ implies that
    $(e_{1},e_{2})\in \E\comp $, it follows from \cite{AMP:IsoThmsGpds}*{Proposition
    10} that 
    $\pi(\mathcal{F})$ is a
    subgroupoid of $\Iso{G}$, and it follows from Lemma~\ref{lem:int is sbgpd}\ref{item:saturation still
      abelian} that $\pi\inv (\pi(\mathcal{F}))$  is abelian.    
    By Lemma~\ref{lem:int is sbgpd}\ref{it:int(H)}, $T\coloneq \Int[G]{\pi(\mathcal{F})}$ is an open subgroupoid of $G$ and
    \[
        \Int[\E]{\pi\inv(\pi(\mathcal{F}))} 
        \overset{\text{L.~\ref{lem:int is sbgpd}}}{=} \E_{T}
        \subset
        \pi\inv (\pi(\mathcal{F})),
    \]
    which implies that the twist $\E_{T}$ is abelian.

    By definition of $\mathcal{F}$, it contains $\E_{S}$ and $\pi\inv(Y)$. Since $S$ and $Y$ are open, we conclude $\E_{S},\pi\inv(Y)\subset \Int[\E]{\mathcal{F}}$, so $S$ and $Y$ are subsets of $\pi(\Int[\E]{\mathcal{F}})$. Since $\pi$ is open, this set is contained in $\Int[G]{\pi(\mathcal{F})}=T$, so $S,Y\subset T$.  Since $T$ is an open subgroupoid of $\Iso{G}$ and $\E_{T}$ is abelian, maximality of~$S$
    implies that $S=T$. Thus $\pi(e)\in Y\subset T=S$. Since $\pi(e)\in X$ was arbitrary, this proves that $X\subset S$.
  \end{proof}

\begin{corollary}[of Lemma~\ref{lem:S max implies}]\label{cor:S max in terms of Omega}
    The following are equivalent:
    \begin{enumerate}[label=\textup{(\roman*)}]
      \item\label{item:cor:S max implied by} 
      $S$ satisfies Condition~\eqref{eq:thm:May17:Int Omega=1}, i.e., 
      $G\z\subset S$ and
      $\E_{S}=\Int[\E]{\Omega_{S}\inv(\set{1})}$.
      \item\label{item:cor:S max implies} $S$ is
      maximal among open subgroupoids of $\Int[G]{\Iso{G}}$ for which
      $\E_{S}$ is abelian.
\end{enumerate}
\end{corollary}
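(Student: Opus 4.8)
The plan is to prove the two implications directly. The forward direction (i)$\Rightarrow$(ii) is elementary; the reverse direction (ii)$\Rightarrow$(i) is where Lemma~\ref{lem:S max implies} enters, which is why the corollary is billed as its consequence. Two facts will be used throughout: since $\pi$ is open and $\pi\inv(\Iso{G})=\Iso{\E}$, it carries open $\T$-invariant subsets of $\Iso{\E}$ bijectively onto open subsets of $\Iso{G}$; and $\Omega_{S}$ is $\T$-invariant, because centrality of the extension gives $\operatorname{Ad}_{S}(z\cdot e)=z\cdot\operatorname{Ad}_{S}(e)$ while $\operatorname{Ad}_{S}(\pi(e))$ is unchanged. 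I will also use that whenever $\go\subset S$ we have $S\z=\go$, so $\operatorname{Ad}_{S}(e)\neq\emptyset$ and the unit at $r(e)$ lies in $\E_{S}(r(e))$ for every $e\in\Iso{\E}$.

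For (i)$\Rightarrow$(ii), I would first check that $S$ is itself an open subgroupoid of $\Int[G]{\Iso{G}}$ with $\E_{S}$ abelian: openness is a standing assumption; from $\E_{S}=\Int[\E]{\Omega_{S}\inv(\set 1)}\subset\Omega_{S}\inv(\set 1)\subset\Iso{\E}$ we get $S=\pi(\E_{S})\subset\Iso{G}$, so $S\subset\Int[G]{\Iso{G}}$; and if $e_{1},e_{2}\in\E_{S}$ are composable they share a unit $u$, and then $e_{1}\in\Omega_{S}\inv(\set 1)$ together with $e_{2}\in\E_{S}(u)$ gives $e_{2}\inv e_{1}e_{2}=e_{1}$, so $\E_{S}$ is abelian. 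For maximality, let $T\supseteq S$ be an open subgroupoid of $\Int[G]{\Iso{G}}$ with $\E_{T}$ abelian; then $\E_{T}=\pi\inv(T)$ is open and contained in $\Iso{\E}$, and for each $e\in\E_{T}$ we have $r(e)\in\go=S\z$, so for every $\tau\in\E_{S}(r(e))\subset\E_{T}(r(e))$ commutativity of $\E_{T}$ forces $\tau\inv e\tau=e$, whence $\Omega_{S}(e)=1$. Thus $\E_{T}\subset\Int[\E]{\Omega_{S}\inv(\set 1)}=\E_{S}$, so $T=\pi(\E_{T})\subset S$, i.e.\ $T=S$.

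For (ii)$\Rightarrow$(i), I would first obtain $\go\subset S$: since no element of $S$ has a unit outside $S\z$, an element of $\go\setminus S$ is never composable with an element of $S$, so $S\cup\go$ is again an open subgroupoid of $\Int[G]{\Iso{G}}$ and $\E_{S\cup\go}=\E_{S}\cup\E_{\go}$ is abelian (each piece is, and the two do not interact), so maximality forces $S\cup\go=S$. For the equality $\E_{S}=\Int[\E]{\Omega_{S}\inv(\set 1)}$, the inclusion ``$\subseteq$'' holds because $\E_{S}$ is open and, $\E_{S}$ being abelian, the computation above (with $T=S$) shows $\Omega_{S}\equiv 1$ on $\E_{S}$. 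For ``$\supseteq$'', set $V=\Int[\E]{\Omega_{S}\inv(\set 1)}$; it is open and $\T$-invariant, hence $V=\pi\inv(X)$ with $X\coloneq\pi(V)$ open and, since $V\subset\Iso{\E}$, contained in $\Iso{G}$. For every $e\in\pi\inv(X)=V$ and $\sigma\in\E_{S}(r(e))$, $\Omega_{S}(e)=1$ forces $\operatorname{Ad}_{S}(e)=\set e$ (the unit at $r(e)$ lies in $\E_{S}(r(e))$ as $\go\subset S$), i.e.\ $e\sigma=\sigma e$; Lemma~\ref{lem:S max implies} then yields $X\subset S$, so $V=\pi\inv(X)\subset\E_{S}$.

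The step needing the most care is the ``$\supseteq$'' half of (ii)$\Rightarrow$(i): before Lemma~\ref{lem:S max implies} can be invoked one must present $\Int[\E]{\Omega_{S}\inv(\set 1)}$ as $\pi\inv$ of an open subset of $\Iso{G}$, which is exactly what $\T$-invariance of $\Omega_{S}$ provides, and one must make sure the cardinality bookkeeping built into $\Omega_{S}$---in particular the convention $\operatorname{Ad}_{S}(e)=\emptyset$ when $r(e)\notin S\z$---does not obstruct the argument; proving $\go\subset S$ at the outset clears both difficulties away.
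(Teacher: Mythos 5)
Your proposal is correct and follows essentially the same route as the paper: the same observation that abelian open subgroupoids of $\Int[G]{\Iso{G}}$ land inside $\Int[\E]{\Omega_{S}\inv(\set{1})}$ drives (i)$\Rightarrow$(ii), the same trick of adjoining $\go$ establishes $S\z=\go$, and Lemma~\ref{lem:S max implies} is applied in the same way for the remaining inclusion. The only (harmless) presentational difference is that you use $\T$-invariance of $\Omega_{S}$ to write the interior directly as $\pi\inv(X)$, whereas the paper works with a neighborhood of an arbitrary point and then saturates it.
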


\begin{proof}
    Note first that, if $T$ is a subgroupoid of $G$ for which $\E_{T}$ is abelian, then $\E_{T}$ is contained in $\Iso{\E}$ and any $\tau\in\E_{T}$ satisfies
    \[
        \operatorname{Ad}_{T}(\tau)
        =
        \set{
            \sigma\inv \tau  \sigma : \sigma\in \E_{T}
        }
        =
        \set{\tau}.
    \]
    If $T$ is open, then $\E_{T}$ is contained in $\Int[\E]{\Iso{\E}}$, so that $\E_{T}\subset \Int[\E]{\Omega_{T}\inv(\set{1})}$.

    \ref{item:cor:S max implied by}$\implies$\ref{item:cor:S max implies}: Let $T$ be
    open and
    as above. If $S\subset T$, then $T\z=\go$ and
    $\E_{S}\subset \E_{T}$, so that
    \begin{align*}
        \Omega_{T}\inv (\set{1})
        &=
        \bigsqcup_{u\in \go}
        \set{
            e\in \E(u):\forall\tau\in \E_{T}(u), e\tau=\tau e
        }
        \\        
        &\subset
        \bigsqcup_{u\in \go}
        \set{
            e\in \E(u):\forall\sigma\in \E_{S}(u), e\sigma=\sigma e
        }
        =\Omega_{S}\inv (\set{1}).
    \end{align*}
    The previous paragraph
    shows
    $(\dagger)$ in the following:
    \[
        \E_{S}\subset \E_{T} 
        \overset{(\dagger)}{\subset} \Int[\E]{\Omega_{T}\inv\set{1}}
        \subset \Int[\E]{\Omega_{S}\inv(\set{1})}
        \overset{\ref{item:cor:S max implied by}}{=}
        \E_{S},
    \]
    so  $\E_{S}=\E_{T}$ and thus $S=T$.

    \ref{item:cor:S max implies}$\implies$\ref{item:cor:S max implied by}:    
    Since $G$ is \etale, $\go$ is clopen, and so $T\coloneq S\cup \go$
  is a clopen subgroupoid of $\Int[G]{\Iso{G}}$ for which the
  restricted twist $\E_{T}$ is abelian. By maximality of~$S$, we conclude $S\z=\go$. 
    If 
    $e_0\in \Int[\E]{\Omega_{S}\inv(\set{1})}$, then 
    we can find an open neighborhood $V$ of $e_0$ such that $V\subset \Omega_{S}\inv(\set{1})$. In particular, $V\subset \Iso{\E}$ and for each $e\in V$ and composable $\sigma\in \E_{S}$, we have  $\sigma\inv e\sigma =e$. Now consider $X\coloneq \pi(V)$. If $e'\in \pi\inv(X)$, then there exists $e\in V$ and $z\in\T $ such that $e'
    =z\cdot e$. In particular,
    \[
    \sigma\inv e'\sigma =
    \sigma\inv (z\cdot e)\sigma
    =
    z\cdot (\sigma\inv e\sigma)
    =
    z\cdot e = e'.
    \]
    This shows that every element $e'$ of $\pi\inv(X)$ likewise commutes with all composable elements of $\E_{S}$.
    By Lemma~\ref{lem:S max implies},
    our assumption in \ref{item:cor:S max implies} implies that
    $X\subset S$, so $\pi\inv(X)\subset \E_{S}$.
    Since $e\in V\subset  \pi\inv(X)$ was an arbitrary point of $\Int[\E]{\Omega_{S}\inv(\set{1})}$, this
    proves that $\Int[\E]{\Omega_{S}\inv(\set{1})}\subset \E_{S}$. Since the reverse containment holds by the first paragraph, that concludes the proof.
\end{proof}

\begin{remark}\label{rmk:cocycle symmetric}
    Applying Corollary~\ref{cor:S max in terms of Omega}\ref{item:cor:S max implies}
    to $2$-cocycles, the only change is that the phrase ``\emph{\ldots for which $\E_{S}$ is abelian}'' becomes ``\emph{\ldots
    which are abelian and
    for which $c$ is symmetric}''. 
\end{remark}

Next, we study
what it means for $a\in \Cst_r (G;\E)$ to
  commute with $i(\Cst_r (S;\E_{S}))$. For example, if $S\z=\go$, then
  by \cite{ren:irms08}*{Theorem 4.2(a)}, the open
  support of $j_{G}(a)$ must be contained in $\Int[\E]{\Iso{\E}}$.
  As a first step, we need 
  a strengthening of \cite{DGNRW:Cartan}*{Lemma
  3.7}.
  We take advantage of
  Remark~\ref{rem-con-defined}
  to prove the following, very helpful lemma
which should  be compared to \cite[Lemma 6.3.3]{Chakraborty:2024:thesis}.

\begin{lemma}\label{lem:conjugation equality}
  Suppose 
  $S\subset \Iso{G}$ and let $h \in C_0(G;\E)$ 
  with $\suppo(h)\subset \Iso{\E}$.
  The following are equivalent.
  \begin{enumerate}[label=\textup{(\roman*)}]
      \item\label{it:conj equ:commutes} For all $f \in C_c(S;\E_{S})$, 
      we have $h*i(f) = i(f)* h$.
      \item\label{it:conj equ:constant on conjugates}  For all
        $u \in S\z $, $e \in \E(u) $, and $\sigma \in \E_{S}(u)$, we have $h(e) = h(\sigma\inv e\sigma)$.
  \end{enumerate}

\end{lemma}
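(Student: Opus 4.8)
The plan is to compute both convolutions $h*i(f)$ and $i(f)*h$ explicitly and compare them pointwise. Since $i(f)\in C_{c}(G;\E)$, Remark~\ref{rem-con-defined} already tells us that the sums defining these convolutions are finite, so well-definedness is not an issue. First I would observe that both $h*i(f)$ and $i(f)*h$ are supported on $\Iso{\E}$: a term $h(e)i(f)(e\inv e')$ can be nonzero only if $e\in\suppo(h)\subset\Iso{\E}$ and $e\inv e'\in\suppo(i(f))\subset\E_{S}\subset\Iso{\E}$ (the last inclusion because $S\subset\Iso{G}$), and a short source/range computation then forces $r(e')=s(e')$; the argument for $i(f)*h$ is symmetric. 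So it suffices to compare the two functions on $\Iso{\E}$.

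Next, fix $e'\in\Iso{\E}$ and set $u:=r(e')=s(e')$. Re-indexing the defining sum \eqref{eq:2} of $h*i(f)(e')$ by $\sigma:=e\inv e'$ (so that $e=e'\sigma\inv$), and that of $i(f)*h(e')$ by $\sigma:=e$, and checking in each case that the summand is invariant under $\sigma\mapsto z\cdot\sigma$ (so the sums descend to finite sums over $\pi(\sigma)\in S(u)$), I obtain
\[
  h*i(f)(e')-i(f)*h(e')=\sum_{\set{\pi(\sigma)\in S(u)}}\bigl(h(e'\sigma\inv)-h(\sigma\inv e')\bigr)\,i(f)(\sigma).
\]
The crucial point is that, for fixed $u\in S\z$, the map $(e',\sigma)\mapsto(e'\sigma\inv,\sigma)$ is a bijection between pairs with $e'\in\E(u)$, $\sigma\in\E_{S}(u)$ and pairs with $e\in\E(u)$, $\sigma\in\E_{S}(u)$ (inverse $(e,\sigma)\mapsto(e\sigma,\sigma)$), and under it the equality $h(e'\sigma\inv)=h(\sigma\inv e')$ is literally the equality $h(e)=h(\sigma\inv e\sigma)$ appearing in \ref{it:conj equ:constant on conjugates}.

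This yields both implications. If \ref{it:conj equ:constant on conjugates} holds, every bracket in the display vanishes, so $h*i(f)=i(f)*h$ for all $f$ (and off $\Iso{\E}$ both sides are zero). Conversely, if \ref{it:conj equ:commutes} holds, then given $u\in S\z$, $e\in\E(u)$ and $\sigma_{0}\in\E_{S}(u)$, I would put $e':=e\sigma_{0}\in\E(u)$ and use Lemma~\ref{lem:Urysohn} for the twist $\E_{S}$ over $S$ to pick $f\in C_{c}(S;\E_{S})$ with $f(\sigma_{0})\neq 0$ and $\pi(\suppo(f))$ a bisection of $S$; since $S\subset\Iso{G}$ we have $S(u)=S^{u}$, so such a bisection meets $S(u)$ in at most one point, and the displayed sum for this $e'$ and $f$ collapses to the single term $\bigl(h(e)-h(\sigma_{0}\inv e\sigma_{0})\bigr)i(f)(\sigma_{0})$, which must vanish, forcing $h(e)=h(\sigma_{0}\inv e\sigma_{0})$.

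I expect the only delicate part to be the bookkeeping in the second paragraph: carefully tracking the $\T$-equivariance so that the re-indexed sums over $\pi(\sigma)$ are well-defined and finite, and confirming that a bisection of $S$ (rather than of all of $G$) already isolates a single term, which works precisely because $S(u)=S^{u}=S_{u}$ under the hypothesis $S\subset\Iso{G}$. Everything else is a direct manipulation of the convolution formula \eqref{eq:2}.
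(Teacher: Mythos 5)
Your argument is correct and is essentially the same as the paper's: both proofs reindex the convolution sums at a point $e\sigma$ of $\Iso{\E}$ so that the comparison reduces to matching $h(e)f(\sigma)$ against $f(\sigma)h(\sigma\inv e\sigma)$, and both use Lemma~\ref{lem:Urysohn} applied to the twist $\E_{S}$ over $S$ to isolate a single summand for the implication \ref{it:conj equ:commutes}$\implies$\ref{it:conj equ:constant on conjugates}. The only (harmless) organizational difference is that your ``master identity'' for the difference $h*i(f)-i(f)*h$ handles arbitrary $f\in C_{c}(S;\E_{S})$ at once, so you do not need to invoke the bisection-span Lemma~\ref{lem-bisec-span} in the converse direction, as the paper does.
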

\begin{proof}
    Fix $u \in S\z $, $e \in \E(u) $, and $\sigma \in \E_{S}(u)$.
  If
  $f \in C_c(S;\E_{S})$, then
  \begin{equation}\label{eq:h*i(f)}
      [h * i(f)](e\sigma) = \sum_{\set{\pi(e_{1})\in G:r(e_{1})= u }}
    h(e_{1})i(f)( e_{1}\inv e\sigma).
  \end{equation}
    If $f\in C_{c}(G;\E)$ is such that $f(\sigma)\neq 0$ and  $\pi(\suppo(f))$ is a bisection of
  $G$, then at most one summand in~\eqref{eq:h*i(f)} survives, namely
  that for $\pi(e_{1})$ with $\pi(e_{1}\inv e\sigma)=\pi(\sigma)$ so
  that
  \[ [h *i(f)](e\sigma) = h(e) f(\sigma).
  \]
  Similarly, in the sum
  \[ [i(f) *h](e\sigma) = \sum_{\set{\pi(e_{1}) \in G: r(e_{1})=r(e)}}
    i(f) (e_{1})\ h( e_{1}\inv e\sigma),
  \]
  only the summand for $\pi(e_{1})=\pi(\sigma)$ survives, which yields
  \[ [i (f) * h](e\sigma) = f(\sigma) h(\sigma\inv e\sigma).
  \]
    
  \ref{it:conj equ:commutes}$\implies$\ref{it:conj equ:constant on conjugates}:
  Fix $u,e,\sigma$ as above.
  Since $S$ is open and $G$ is \etale, $S$ is likewise \etale, so by Lemma~\ref{lem:Urysohn}, there exists an element $f$ 
  of $C_{c}(S;\E_{S})$ such that $f(\sigma)\neq 0$ and $\pi(\suppo(f))$ is a bisection.
  With the previous computation, we conclude that
    \[ h(e) f(\sigma)= [h*i(f)](e\sigma)\overset{\ref{it:conj equ:commutes}}{=} [i(f)*h](e\sigma)= f(\sigma) h(\sigma\inv e\sigma).\] 
    Since  $f(\sigma)\neq 0$, we deduce $h(e) = h(\sigma\inv e\sigma)$.

    \ref{it:conj equ:constant on conjugates}$\implies$\ref{it:conj equ:commutes}: 
    For any $f\in C_c(S;\E_{S})$ with $\pi(\suppo(f))$ a bisection,
    first note that $h*i(f)$ and $i(f)*h$ both vanish on $\E\setminus\Iso{\E}$, since we assume $h$ and $f$ to be supported in the isotropy, and they also both vanish on $\E(u)$ for $u\notin s(\suppo(f))$. So suppose $u\in s(\suppo(f))$; let $\sigma\in \suppo(f)\cap \E_{S}(u)$. For $e\in \E(u)$,
    we have by the previous computation that
    \[
    [h*i(f)](e\sigma)=h(e) f(\sigma)\overset{\ref{it:conj equ:constant on conjugates}}{=}  f(\sigma) h(\sigma\inv e\sigma) = [i(f)*h](e\sigma)
    .\]
    Since any element $e'$ of $\E(u)$ can be written as $e'=(e'\sigma\inv)\sigma$, the above proves that the $h*i(f)$ and $i(f)*h$ coincide on all of $\E(u)$.
    Since such $f$ span $C_c(S;\E_{S})$ by Lemma~\ref{lem-bisec-span}, we conclude 
    \ref{it:conj equ:commutes}. 
\end{proof}

 As \cite[Theorem 4.2(i)]{ren:irms08} also holds in the case that the groupoid is not second countable, we deduce:
\begin{corollary}
    Suppose 
  $\go\subset S\subset \Iso{G}$. An element $a$ of $\Cst_r (G;\E)$ commutes with $i(\Cst_r (S;\E_{S}))$ if and only if 
  \[
  j_{G}(a)(e)=
  \begin{cases}
  0 &\text{ whenever } e\notin \Iso{\E},
  \\
      j_{G}(a)(\sigma\inv e\sigma) &\text{ whenever } (e,\sigma)\in (\Iso{\E}\times\E_{S})\cap \E\comp.
    \end{cases}
  \]
\end{corollary}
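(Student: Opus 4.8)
The plan is to apply Lemma~\ref{lem:conjugation equality} to the function $h\coloneq j_G(a)$, using Proposition~\ref{prop-bfpr} to translate operator identities in $A\coloneq\Cst_r(G;\E)$ into convolution identities among $j_G$-images, and using \cite{ren:irms08}*{Theorem 4.2(i)} (valid without second countability) to extract the support condition in the forward direction. Throughout, the hypothesis $\go\subset S$ is used only via $S\z=\go$, and $S\subset\Iso{G}$ (hence $\E_S\subset\Iso{\E}$) is the standing assumption needed to invoke Lemma~\ref{lem:conjugation equality}.

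First I would record two pieces of bookkeeping. \emph{(a) Translating commutators.} For $f\in C_c(S;\E_S)$ the element $i(f)$ lies in $C_c(G;\E)$, so by Remark~\ref{rem-con-defined} and Proposition~\ref{prop-bfpr} (with $j_G(i(f))$ equal to the extension by zero of $f$, by Lemma~\ref{lem:j-map commutes}) one gets $j_G\bigl(a\,i(f)\bigr)=h*i(f)$ and $j_G\bigl(i(f)\,a\bigr)=i(f)*h$, the convolutions being the finite sums of~\eqref{eq:2}. Since $j_G$ is injective, $a\,i(f)=i(f)\,a$ in $A$ if and only if $h*i(f)=i(f)*h$; and since $C_c(S;\E_S)$ is dense in $\Cst_r(S;\E_S)$ and both $i$ and multiplication are continuous, $a$ commutes with $i(\Cst_r(S;\E_S))$ if and only if $h*i(f)=i(f)*h$ for every $f\in C_c(S;\E_S)$. \emph{(b) Identifying index sets.} The set $\set{(e,\sigma)\in(\Iso{\E}\times\E_S)\cap\E\comp}$ coincides with $\set{(e,\sigma):u\in S\z,\ e\in\E(u),\ \sigma\in\E_S(u)}$: composability forces $r(\sigma)=s(e)$, $e\in\Iso{\E}$ forces $r(e)=s(e)$, and $\sigma\in\E_S\subset\Iso{\E}$ forces $r(\sigma)=s(\sigma)$, so $e,\sigma,\sigma\inv e\sigma$ all live over the single unit $u\coloneq r(e)$, with $u\in S\z$ because $\pi(\sigma)\in S$; the reverse inclusion is immediate. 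Hence the second displayed case of the corollary is precisely condition~\ref{it:conj equ:constant on conjugates} of Lemma~\ref{lem:conjugation equality} for $h$, while the first displayed case is precisely the standing hypothesis $\suppo(h)\subset\Iso{\E}$ of that lemma.

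Given (a) and (b), the reverse implication is immediate: if $j_G(a)$ satisfies the two displayed cases, then $h=j_G(a)$ satisfies $\suppo(h)\subset\Iso{\E}$ and condition~\ref{it:conj equ:constant on conjugates} of Lemma~\ref{lem:conjugation equality}, so \ref{it:conj equ:constant on conjugates}$\Rightarrow$\ref{it:conj equ:commutes} of that lemma gives $h*i(f)=i(f)*h$ for all $f\in C_c(S;\E_S)$, and by~(a) $a$ commutes with $i(\Cst_r(S;\E_S))$. For the forward implication, suppose $a$ commutes with $i(\Cst_r(S;\E_S))$. Since $\go\subset S$, this subalgebra contains the canonical copy of $C_0(\go)$ in $A$, so $a$ commutes with $C_0(\go)$; by \cite{ren:irms08}*{Theorem 4.2(i)} we conclude $\suppo(j_G(a))\subset\Int[\E]{\Iso{\E}}\subset\Iso{\E}$, which is the first displayed case. (Alternatively one sees this directly: for $\phi\in C_c(\go)$, $a\,i(\phi)=i(\phi)\,a$ forces $j_G(a)(e)\bigl(\phi(s(e))-\phi(r(e))\bigr)=0$ for every $e$, whence $j_G(a)(e)=0$ whenever $r(e)\neq s(e)$.) Now $h=j_G(a)$ satisfies $\suppo(h)\subset\Iso{\E}$ and, by~(a), $h*i(f)=i(f)*h$ for all $f\in C_c(S;\E_S)$, so \ref{it:conj equ:commutes}$\Rightarrow$\ref{it:conj equ:constant on conjugates} of Lemma~\ref{lem:conjugation equality} yields the second displayed case.

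The only mildly delicate point is bookkeeping item~(b): verifying that all the products $\sigma\inv e\sigma$ that occur are legitimate groupoid products over a common unit, so that the two parametrizations of the conjugation index set genuinely agree. Everything else is a routine assembly of Lemma~\ref{lem:conjugation equality}, Proposition~\ref{prop-bfpr}, injectivity of $j_G$, density of $C_c(S;\E_S)$, and Renault's support theorem.
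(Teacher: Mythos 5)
Your argument is correct and is exactly the route the paper takes: the paper presents this corollary as an immediate deduction from Lemma~\ref{lem:conjugation equality} together with Renault's Theorem 4.2(i) (for the support-in-isotropy condition), and your write-up simply makes explicit the same bookkeeping — translating commutators through $j_{G}$ via Proposition~\ref{prop-bfpr} and injectivity, the density of $C_{c}(S;\E_{S})$, and the identification of the composability condition with the index set in Lemma~\ref{lem:conjugation equality}\ref{it:conj equ:constant on conjugates}. No gaps.
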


The next lemma
is the key step in the proof of
Proposition~\ref{prop:max_new}.

\begin{lemma}[technical lemma]\label{lem:fancy lemma}
   Assume that $S\subset \Iso{G}$
   is abelian and
   fails to satisfy Condition~\eqref{eq:thm:May17:icc}, i.e.,
   \[
   \Int[\E]{\Omega_{S}\inv (\Z_{>1})}\neq \emptyset.
   \]
    Then there exist $n\in\Z_{>0}$, pairwise disjoint open bisections $V_{0},\ldots,V_{n} \subset \Iso{G}\setminus S$ and  $\T$-equivariant homeomorphisms \[
        \Psi_{i}\colon \pi\inv (V_i) \to\pi\inv (V_0)
    \] 
   such that: 
       If  $e\in\pi\inv(V_{i})$ for some $i\in\set{0,\ldots, n}$ is composable with $\sigma \in \E_{S}$, then there exists a unique $j\in\set{0,\ldots, n}$ such that $\sigma\inv e\sigma\in\pi\inv(V_{j})$, and we have $\Psi_{i}(e)=\Psi_{j}(\sigma\inv e\sigma)$.
\end{lemma}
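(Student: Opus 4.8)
\emph{Overview and the choice of $e_0$.} The plan is to find a point $e_0\in\Iso{\E}$ near which $\Omega_{S}$ is \emph{constant}, equal to some integer $m\ge 2$, to take $V_0$ a small open bisection through $\pi(e_0)$ and $V_1,\dots,V_n$ (with $n:=m-1$) small open bisections through the other $m-1$ members of $\operatorname{Ad}_{S}(\pi(e_0))$, and to let each $\Psi_k$ be the homeomorphism conjugating $\pi\inv(V_k)$ back onto $\pi\inv(V_0)$. To produce $e_0$, note that $U:=\Int[\E]{\Omega_{S}\inv(\Z_{>1})}$ is non-empty by hypothesis and, being open in the \LCH\ space $\E$, is a Baire space on which $\Omega_{S}$ is finite-valued. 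Using centrality of the extension, $\Omega_{S}$ is $\T$-invariant; and wherever $\Omega_{S}$ is finite, both $e\mapsto|\operatorname{Ad}_{S}(e)|$ and $\pi(e)\mapsto|\operatorname{Ad}_{S}(\pi(e))|$ are lower semicontinuous — given finitely many of the conjugates of a point, spread them continuously over a neighbourhood using continuous local sections of $\pi\colon\E_{S}\to S$ together with open bisections of the \etale\ groupoid $S$, and keep their $\pi$-images distinct by Hausdorffness. Hence $\Omega_{S}$ is lower semicontinuous on $U$, so a Baire-category argument (applied to the increasing sequence of relatively closed sets $\set{e\in U:\Omega_{S}(e)\le k}$, $k\ge 2$) produces $e_0\in U$ with a neighbourhood on which $\Omega_{S}\equiv m$ for some $m\in\Z_{\ge 2}$. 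Since $\Omega_{S}$ is $\T$-invariant and $\pi$ open, we may then choose an open bisection $V_0\subset\Iso{G}\setminus S$ with $\pi(e_0)\in V_0$ and $\pi\inv(V_0)\subset\Omega_{S}\inv(\set m)$. Set $n:=m-1\in\Z_{>0}$ and $u:=r(e_0)$; then $\operatorname{Ad}_{S}(e_0)$ has $m$ elements $f_0=e_0,f_1,\dots,f_n$ with pairwise distinct $\pi$-images, all lying in $\Iso{G}\setminus S$, and we fix $\sigma_k\in\E_{S}(u)$ with $f_k=\sigma_k\inv e_0\sigma_k$ and $\sigma_0=u$.

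\emph{The bisections and the maps.} For $k\ge 1$ choose an open bisection $T_k\subset S$ through $\pi(\sigma_k)$, small enough to admit a continuous section of $\pi\colon\E_{S}\to S$ taking $\pi(\sigma_k)$ to $\sigma_k$; shrinking, arrange $r(T_k)=r(V_0)=:O$, and note $O\subset S\z$ since every $g\in V_0$ has $|\operatorname{Ad}_{S}(g)|=m>1$. Put $T_0:=O$. For $v\in O$ let $t^{(k)}_v\in T_k$ and $g_v\in V_0$ be the unique elements over $v$, and set $V_k:=T_k\inv V_0T_k$. Because $T_k,V_0\subset\Iso{G}$ and $T_k$ is a bisection, $V_k=\set{(t^{(k)}_v)\inv g_v t^{(k)}_v:v\in O}$; since the multiplication of the \etale\ groupoid $G$ is open (\cite{Sims:gpds}*{Lemma~8.4.11}), $V_k$ is an open bisection, and it lies in $\Iso{G}\setminus S$. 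After shrinking $O$ — using that the $\pi(f_k)$ are distinct and $G$ is Hausdorff — the $V_0,\dots,V_n$ become pairwise disjoint. Now, for $e\in\pi\inv(V_k)$, pick $\tau\in\E_{S}(r(e))$ with $\pi(\tau)\in T_k$ and set
\[
  \Psi_k(e):=\tau\, e\, \tau\inv .
\]
Centrality makes this independent of the choice of $\tau$ and $\T$-equivariant; near each point $\tau$ may be chosen continuously (via the section above and the bisection structure of $T_k$), so $\Psi_k$ is continuous. A direct check gives $\pi(\Psi_k(e))=t^{(k)}_{r(e)}\,\pi(e)\,(t^{(k)}_{r(e)})\inv=g_{r(e)}\in V_0$, so $\Psi_k$ maps $\pi\inv(V_k)$ into $\pi\inv(V_0)$, and $e'\mapsto\tau\inv e'\tau$ (with $\pi(\tau)\in T_k$, $\tau\in\E_{S}(r(e'))$) is a continuous two-sided inverse; hence $\Psi_k\colon\pi\inv(V_k)\to\pi\inv(V_0)$ is a $\T$-equivariant homeomorphism.

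\emph{The conjugation identity.} Because $\Psi_k(e)$ is an $\E_{S}$-conjugate of $e$, and $\operatorname{Ad}_{S}$ is unchanged under $\E_{S}$-conjugation of its argument, $\operatorname{Ad}_{S}(e)=\operatorname{Ad}_{S}(\Psi_k(e))$ and $\operatorname{Ad}_{S}(\pi(e))=\operatorname{Ad}_{S}(\pi(\Psi_k(e)))$; since $\Psi_k(e)\in\pi\inv(V_0)\subset\Omega_{S}\inv(\set m)$, this forces $\pi\inv(V_k)\subset\Omega_{S}\inv(\set m)$ as well. Let $e\in\pi\inv(V_i)$ be composable with $\sigma\in\E_{S}$, so $v:=r(e)\in O$ and $\sigma\in\E_{S}(v)$. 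Writing $\pi(e)=(t^{(i)}_v)\inv g_v t^{(i)}_v$ and using $t^{(i)}_v\in S(v)$, we get $\operatorname{Ad}_{S}(\pi(e))=\operatorname{Ad}_{S}(g_v)$, which has exactly $m=n+1$ elements (as $g_v\in V_0$) and contains the $n+1$ distinct points $\mu_j(v):=(t^{(j)}_v)\inv g_v t^{(j)}_v\in V_j$; hence $\operatorname{Ad}_{S}(\pi(e))=\set{\mu_0(v),\dots,\mu_n(v)}$, so $\pi(\sigma\inv e\sigma)$ equals $\mu_j(v)$ for a unique $j$, i.e.\ $\sigma\inv e\sigma\in\pi\inv(V_j)$ for a unique $j$. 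Finally, computing $\pi$ and using $\pi(\sigma)\inv\pi(e)\pi(\sigma)=\mu_j(v)$, one finds that $\gamma:=\sigma\,\tau_j\inv\tau_i\in\E_{S}(v)$ — where $\tau_i,\tau_j\in\E_{S}(v)$ are the lifts of $t^{(i)}_v,t^{(j)}_v$ used to define $\Psi_i,\Psi_j$ at $e$ and $\sigma\inv e\sigma$ — projects to an element of $S(v)$ centralising $\pi(e)$; since $\Omega_{S}(e)=m<\infty$, the map $\pi$ is injective on $\operatorname{Ad}_{S}(e)$, whence $\gamma$ centralises $e$ itself, and rearranging $\gamma\inv e\gamma=e$ yields $\tau_i e\tau_i\inv=\tau_j(\sigma\inv e\sigma)\tau_j\inv$, that is $\Psi_i(e)=\Psi_j(\sigma\inv e\sigma)$.

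\emph{Main obstacle.} The crux is the first step: arranging that $\Omega_{S}$ is genuinely \emph{constant} (not merely finite, nor merely bounded below) on some $\pi\inv(V_0)$. This is exactly where the Baire property of $U$ and the lower semicontinuity of $\Omega_{S}$ on $U$ are needed, and extracting that semicontinuity requires some care with continuous local sections of $\pi\colon\E_{S}\to S$ and with Hausdorffness; everything downstream is bookkeeping with the central $\T$-action.
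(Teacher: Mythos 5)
Your proposal is correct and follows essentially the same route as the paper's proof: lower semicontinuity of the conjugacy-class count plus a Baire-category argument to locate an open set on which $\Omega_{S}$ is constant and finite, followed by conjugate bisections $V_{k}$ and conjugation-by-lifts homeomorphisms $\Psi_{k}$. The differences are organizational rather than substantive — you run the Baire argument upstairs in $\E$ rather than on $r(X)\subset\go$, build $V_{k}$ directly as the set product $T_{k}^{-1}V_{0}T_{k}$ instead of via the paper's $\kappa_{i}$ maps, and deduce $\Psi_{i}(e)=\Psi_{j}(\sigma^{-1}e\sigma)$ from injectivity of $\pi$ on $\operatorname{Ad}_{S}(e)$ rather than from the partition $S(u)=\bigsqcup_{i}S_{i}(u)$.
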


\begin{proof}
    We use the notation $\operatorname{Ad}_S$ and $\Omega_{S}$ from Definition~\ref{def:Ad_S,Omega}. 
    We start by assuming that $S\subset\Iso{G}$ is abelian and that $X\subset \Iso{\E}$ is a non-empty open set with $X\cap \E_{S}=\emptyset$ and such that
    \begin{equation}\label{eq:new:property of X}
    N < \left|
    \operatorname{Ad}_{S}(\pi(e))
    \right|
        =
        \left| 
        \operatorname{Ad}_{S}(e)
        \right|
        < \infty
        \quad\text{for all }e\in X
    \end{equation}
    for some $N\in\{0,1,\ldots\}$.
    For any $z\in \T$ and $e\in\Iso{\E}$, we have
    \begin{equation}
        \label{eq:Ad of ze}
    \operatorname{Ad}_{S}(z\cdot e) = \set{ z\cdot e' : e'\in \operatorname{Ad}_{S}(e)},
    \end{equation}
    and thus in particular  $\Omega_{S}(z\cdot e)=\Omega_{S}(e)$. We can thus replace $X$ with $\pi\inv(\pi(X))$ without changing its property 
    at~\eqref{eq:new:property of X}.
    Note
    that~\eqref{eq:new:property of X}
    implies that $U\coloneq r(X)=s(X)$ is contained in~$S\z$, since $\Omega_{S}(e)=0$ for $e\in \Iso{\E}$ with $s(e)\notin S\z$. 
    Since $X\cap \E_{S}=\emptyset$ by assumption,
    we have that $V_0\coloneq\pi(X)$ is contained in $  G\setminus S$.

    By shrinking $X$, we can assume that $V_0$ is a (necessarily open) bisection in $G$. 
  Since $V_0\subset G\setminus S$ and $V_0=\pi(X)\subset \pi(\Int[\E]{\Iso{\E}})$, it follows from 
  Lemma~\ref{lem:int is sbgpd}\ref{item:int of IsoE vs int of IsoG}
  that $V_0\subset \Int[G]{\Iso{G}}\setminus S$.
   Let $g_0(\cdot)\colon U\to V_0$ be the  homeomorphism that sends $u\in U\subset S\z$
   to the unique element $g_0(u)$ in $V_0\cap G(u)\subset \pi(X)$.
    
    \begin{claim}\label{claim:lsc}
    The map $u\mapsto |\operatorname{Ad}_{S}(g_0(u))|$ is lower semi-continuous from $U$ to $\Z_{\geq 0}$.
    \end{claim}

    \begin{proof}[Proof of Claim~\ref{claim:lsc}]
    Fix any $\overline{u}\in U
    \subset S\z
    $ and let $g_0\coloneq g_0(\overline{u})$. 
    It will suffice to find an open neighborhood $\tilde{U}\subset U$ of $\overline u$ such that
    \begin{equation}\label{eq:lsc inequality}
    | \operatorname{Ad}_{S}(g_0(u))|
    \geq 
    |
   \operatorname{Ad}_{S}(g_0(\overline{u}))|
    \end{equation} 
for all $u\in \tilde{U}$.
If $|\operatorname{Ad}_{S}(g_0)|=N+1$, then since $g_0(u)\in \pi(X)$, Inequality~\eqref{eq:lsc inequality} follows automatically from the property at~\eqref{eq:new:property of X} of $X$, and we are done; therefore, we may assume that $n\coloneq |\operatorname{Ad}_{S}(g_0)|-1 
> N\geq 0
$,
so that 
    \[
        \operatorname{Ad}_{S}(g_0)
        =
        \set{ g_0,g_1,\ldots,g_{n}}
        \subset
        G(\overline{u})
    \]
    for pairwise distinct elements $g_{i}$.
     Since $G$ is Hausdorff,
    there is an open neighborhood $\tilde{U}$ of $\overline u$
    such that there exist 
    open
    bisections 
    $V_{1},\ldots, V_{n}$ for which $g_{i}\in V_{i}$ and $\tilde{U}=s(V_{i})$  for all $i\in\set{1,\ldots, n}$. We let $g_i(\cdot)$ denote the homeomorphisms from $\tilde{U}$ to $V_i$ sending $u$ to the unique element $g_i(u)\in V_i\cap G_u$; in particular, $g_{i}(\overline{u})=g_{i}\in G(\overline{u})$.

\begin{figure}[h]
    \centering
    \includegraphics[height=0.6\textheight]{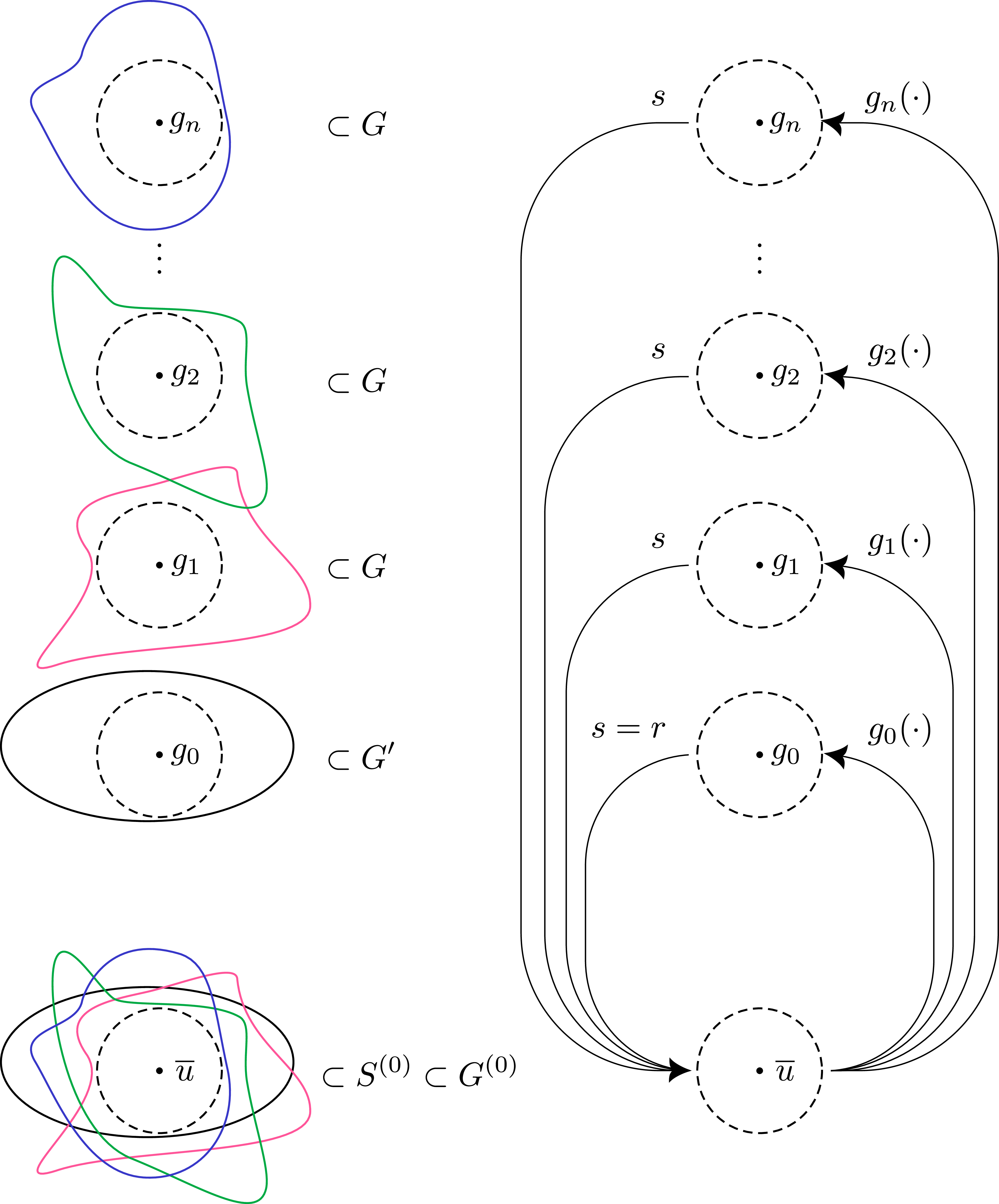}
    \caption{We shrink the open bisections (%
    drawn with solid lines
    in color on the left) around the $S$-conjugates $g_{0},\ldots, g_{n}$ to make them disjoint and homeomorphic (%
    circular neighborhoods drawn with dashed lines
    ).}
    \label{fig:step1}
\end{figure}

    For any two distinct $i,j\in\set{0,\ldots,n}$, we have $g_{i}(\overline{u})=g_{i}\neq g_{j}=g_{j}(\overline{u})$, so by continuity 
    we can shrink $\tilde{U}$
    and
    assume that $g_{i}(u)\neq g_{j}(u)$ for all $u\in \tilde{U}$, meaning that the bisections $V_0\cap r\inv (\tilde{U}),V_{1},\ldots, V_{n}$ are pairwise disjoint; see Figure~\ref{fig:step1}.
    For each $u\in \tilde{U}$,
    we have $s(g_0(u))=u=r(g_0(u))$, so for each $i\in\set{0,\ldots, n}$, we may 
    consider the following pairwise disjoint subsets of $S(u)$: 
    \begin{align*}
    S_{i}(u)
    =
    \set{
    t\in S(u): t\inv g_0(u)t=g_{i}(u)
    },
    \end{align*}
    so that
    \begin{align}        
    \set{ t\inv g_{0}(u) t : t\in S_{i}(u)}
    =
    \begin{cases}
    \emptyset,
    &
    \text{if }S_{i}(u)=\emptyset,
    \\
    \set{g_{i}(u)}
    &
    \text{otherwise.}
    \end{cases}
    \end{align}
    We claim that we can shrink $\tilde{U}$ 
    in such a way that each $S_{i}(u)$ is non-empty
    for $u\in \tilde{U}$.

For each $i\in\set{1,\ldots,n}$, we know by assumption on $g_0=g_0(\overline{u})$ that  $S_{i}(\overline{u})$ is non-empty, so pick $s_{i}\in S_{i}(\overline{u})$.
By choosing small enough open bisections $W_{i}\subset S
\subset \Iso{G}
$ around $s_{i}$ and by shrinking 
the neighborhood $\tilde{U}$ of $\overline{u}$,
we get homeomorphisms $s_i (\cdot) \colon 
\tilde{U}
\to W_i$ sending $u$ to the unique element $s_i(u)\in W_i\cap G(u)\subset S(u)$;
in particular, $s_{i}(\overline{u})=s_{i}$.
Since $u\in S_0(u)$ for all $u\in \tilde{U}\subset S$, we may further let $W_0\coloneq \tilde{U}$ and let $s_0(\cdot)\colon \tilde{U}\to W_0$ be defined by $s_0(u)=u$.

Since $g_{i}(u)\in G_u$ and $s_{i}(u)\in S(u)$ for each $u\in \tilde{U}$, we may
    consider the continuous map
    \[
        \kappa_i\colon \tilde{U}\to 
        G
        \quad\text{ given by }\quad
        u \mapsto
        s_{i}(u)\inv g_0(u)s_{i}(u) g_{i}(u)\inv.
    \]
    By construction of $s_{i}(\cdot)$, we have that $s_{i}(\overline{u})=s_{i}$ is an element of $S_{i}(\overline{u})$, meaning that
    \[\kappa_{i}(\overline{u})=s_{i}\inv g_0s_{i}g_{i}\inv=\overline{u}\in\go,
    \]
    so by \etale ness, $\kappa_{i}\inv(\go)$ is an open neighborhood of $\overline{u}$. By 
    replacing $\tilde{U}$ with its open subset $\kappa_{i}\inv(\go)$, 
    we can without loss of generality assume that $\kappa_{i}(\tilde{U})\subset \go$ for all $i\in\set{0,\ldots, n}$. In other words,
    \begin{equation*}     
    s_{i}(u)\inv g_{0}(u) s_{i}(u)
    =
    g_{i}(u)
    \quad\text{for all } u\in \tilde{U},
    \end{equation*}
    as depicted in Figure~\ref{fig:step2}. In particular,
    $S_{i}(u)$ is non-empty, as claimed.

\begin{figure}[h]
    \centering
    \includegraphics[height=0.6\textheight]{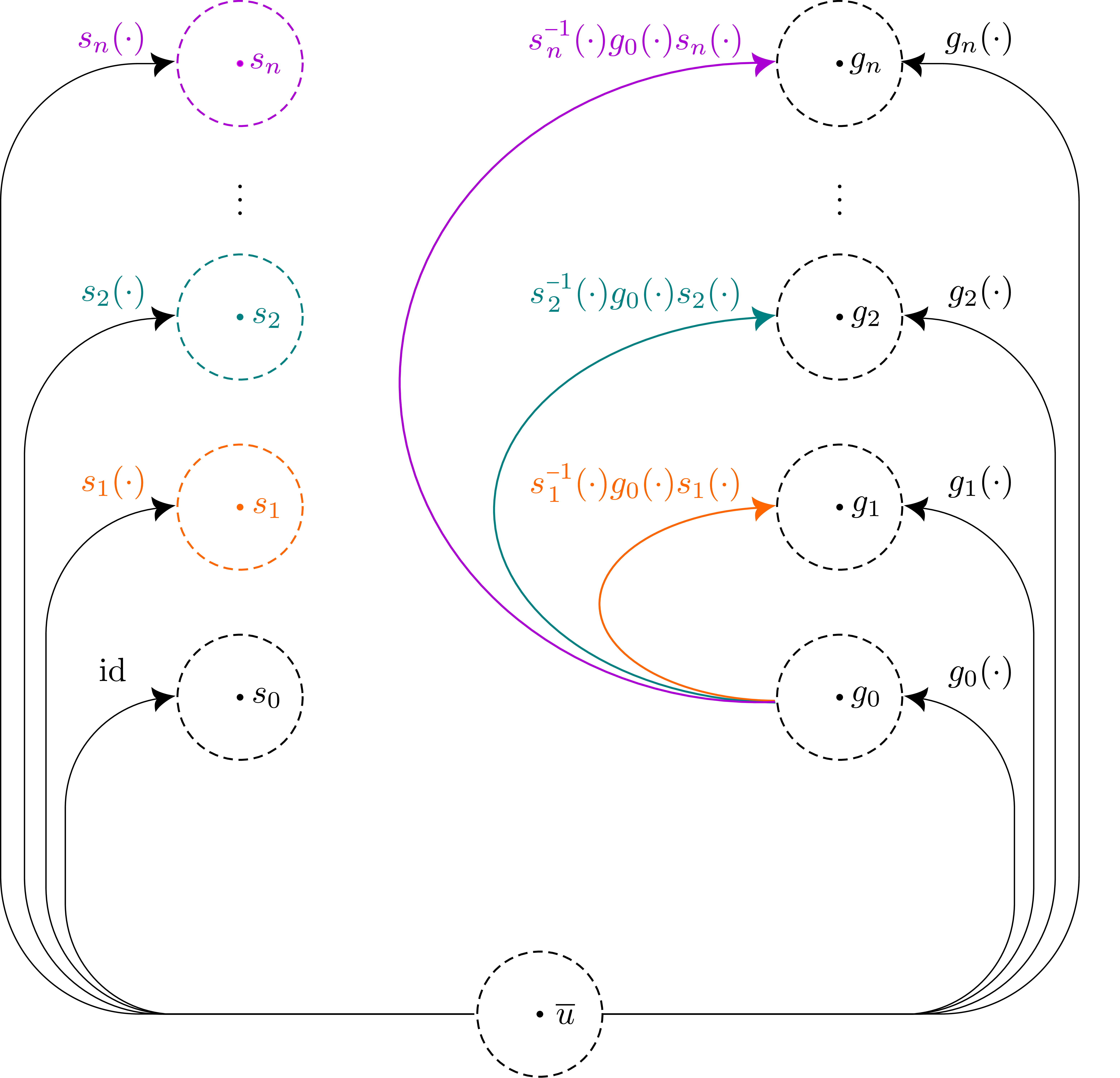}
    \caption{Conjugating $g_0(u)$ by $s_{i}(u)$ yields $g_i(u)$.}
    \label{fig:step2}
\end{figure}
    
    We conclude for each $i\in\set{0,\ldots,n}$ that
    \begin{align}\label{eq:sqcup V_i, finer}
    \Iso{G}
    \supset
    \set{
            t\inv g_{0}(u) t : u\in \tilde{U}, t\in S_{i}(u)
        }
        \supset    
    \set{ g_{i}(u) : u\in \tilde{U}}
    =
    V_{i}\cap r\inv(\tilde{U}).
    \end{align}
 Since $g_{i}(u)\neq g_{j}(u)$ for all $u\in \tilde{U}$ and $i\neq j$ in $\set{0,\ldots,n}$
 and since $S$ is abelian,
 it follows from $s_{i}(u)\inv g_{0}(u) s_{i}(u)
    =
    g_{i}(u)$
     that $g_{i}(u)\in \Iso{G}\setminus S$, so $
     V_{i}\cap r\inv(\tilde{U})
     \subset \Int[G]{\Iso{G}}\setminus S$,  and that
   \[
   |\operatorname{Ad}_{S}(g_0(u))|\geq n+1.
   \]
We have shown: for any fixed element $\overline{u}$ of $U$, we have a neighborhood $\tilde{U}\subset U$  such that 
\[
    | \operatorname{Ad}_{S}(g_0(u))|
    \geq 
    |
   \operatorname{Ad}_{S}(g_0(\overline{u}))|
\]
for all $u\in \tilde{U}$.
Since $\overline{u}$ was arbitrary, this finishes our proof.
    \hfill\qedhere\text{ (Claim~\ref{claim:lsc})}
\end{proof}

\begin{claim}\label{claim:= n+1}
    There exists
    $n\geq N$
    and a non-empty open set $\tilde{X}\subset X$ such that
    \begin{equation}\label{eq:property of tilde-X,new}
    \left|
    \operatorname{Ad}_{S}(\pi(e))
    \right|
        =
        \left| 
        \operatorname{Ad}_{S}(e)
        \right|
        = n+1
        \quad\text{for all }e\in \tilde{X}.
    \end{equation}
\end{claim}
 \begin{proof}[Proof of Claim~\ref{claim:= n+1}]
    As any $e\in X$ already satisfies the equality of set-cardinalities, it suffices to force the equality
    \[
         \left|
    \operatorname{Ad}_{S}(\pi(e))
    \right|
        = n+1.
    \]
    By lower semi-continuity (Claim~\ref{claim:lsc}), for each 
    $n\geq N$,
    the set
    \[
        C_{n} = \{ u\in U: |\operatorname{Ad}_{S}(g_0(u))|\leq n+1\}
    \]
    is closed in $U$. By our assumption on $X$ 
    at~\eqref{eq:new:property of X},
    we have
    \[
     U = \bigcup_{%
        n\geq N    
    }
C_{n}.
    \]
    Since $\go$ is \LCH, and since $U$ is nonempty and open in $\go$, it is a Baire space \cite{Kelley_book}*{34 Theorem}.  Therefore at least one of the closed sets has an interior point \cite{Schechter_book}*{20.15}. If we let $n$ be the smallest element 
    in $\{N,N+1,\ldots\}$
    for which $\Int[\go]{C_n}=\Int[U]{C_n}$ is non-empty, then there exists
   $\overline{u}\in \Int[\go]{C_n}$ with $|\operatorname{Ad}_{S}(g_0(\overline{u}))|=n+1$. By lower semi-continuity (Claim~\ref{claim:lsc}), there exists an open neighborhood $\tilde{U}\subset U$ of $\overline{u}$ such that, for all $u\in \tilde{U}$, we have
    \[
        |\operatorname{Ad}_{S}(g_0(u))|
        \geq 
        |\operatorname{Ad}_{S}(g_0(\overline{u}))|
        = n+1
        .
    \]
    By definition of $C_n$, this implies that, for all $u$ in the non-empty, open set $\tilde{U}\cap \Int[\go]{C_n}$, we have
    \[
        |\operatorname{Ad}_{S}(g_0(u))|
        =
        n+1
        .
    \]
    We may now let $\tilde{X}\coloneq
        r_{\E}\inv \left( \tilde{U}\cap \Int[\go]{C_n}\right)
    \subset r_{\E}\inv(U) = X$. 
    \hfill\qedhere\text{ (Claim~\ref{claim:= n+1})}
    \end{proof}

Assume now that Condition~\eqref{eq:thm:May17:icc} fails, i.e., there exists a non-empty set $X\subset  \Iso{\E}$ that is open in $\E$ and 
    such that Equation~\eqref{eq:new:property of X} holds for $N=1$, i.e.,     
    \begin{equation}\label{eq:property of X}
    1 < \left|
    \operatorname{Ad}_{S}(\pi(e))
    \right|
        =
        \left| 
        \operatorname{Ad}_{S}(e)
        \right|
        < \infty
        \quad\text{for all }e\in X.
    \end{equation}
    Note that~\eqref{eq:property of X} implies that $X\cap \E_{S}=\emptyset$, since $|\operatorname{Ad}_{S}(s)|=1$ for any $s\in S$ because $S$ is abelian. Thus, we may invoke the above claims; in particular, by
    replacing $X$ with its non-empty open subset $\tilde{X}$ from Claim~\ref{claim:= n+1} and by scavenging the proof of Claim~\ref{claim:lsc} (i.e., shrinking $X$ even further), we can now make the following assumptions:
\begin{itemize}
    \item The number $n\in \Z_{>0}$ is such that every element $e$ in the open set $\tilde{X}=X$ satisfies~\eqref{eq:property of tilde-X,new} (not just~\eqref{eq:property of X});
    \item for each $i\in\set{0,\ldots, n}$, we have pairwise disjoint
    open
    bisections $V_{i}\subset 
    \Int[G]{\Iso{G}}\setminus S
    $
    and $W_{i}\subset S$, and homeomorphisms $g_{i}(\cdot)\colon U\coloneq r(X)\to V_{i}$  and  $s_{i}(\cdot)\colon U\to W_{i}$ 
    such that $\pi(X)=V_0$ and
    \begin{equation}\label{eq:s_i conjugate}        
    s_{i}(u)\inv g_{0}(u) s_{i}(u)
    =
    g_{i}(u)
    \quad\text{for all } u\in U;
    \end{equation}
    \item for $i=0$, we have $W_{0}=U$ and $s_{0}(u)=u$.
\end{itemize}
     By the property at~\eqref{eq:property of tilde-X,new} of $X=\tilde{X}$ and the fact that the $V_{i}$ are mutually disjoint, we conclude 
     from
     Equation~\eqref{eq:s_i conjugate}     that
    \begin{equation}\label{eq:Ad_S g_0(u)}
        \operatorname{Ad}_{S}(g_0(u))
        = \set{ g_0(u),\ldots, g_n(u)}
        \subset
        G(u)
    \end{equation}
    for all  $u\in U$.   
    This in turn implies that sets
    \begin{align}\label{eq:S_i(u), def}
    S_{i}(u)
    =
    \set{
    t\in S(u): t\inv g_0(u)t=g_{i}(u)
    },
    \end{align}
    which are non-empty by Equation~\eqref{eq:s_i conjugate}, are a partition of $S(u)$.
    Indeed, if $s\in S(u)$, then $s\inv g_0(u) s\in  \operatorname{Ad}_{S} (g_0(u))$, so  $s\inv g_0(u) s=g_i(u)$ for some $i\in\set{0,\ldots,n}$, meaning that $s\in S_i (u)$. Thus,
    \begin{align}\label{eq:sqcup S_i}
        S(u) = \bigsqcup_{i=0}^{n} S_{i}(u)
    \quad\text{for all } u\in U.
    \end{align}

    Next, we will deal with the $\E_{S}$-conjugates by `lifting' the maps $g_i(\cdot)$.
   By shrinking~$U$, we can without loss of generality assume that the bisections $V_0$ and $W_0,\ldots,W_n$ allow
   local trivializations
   of the principal $\T$-bundle $\E$, meaning that there exist  $\T$-equivariant homeomorphisms $\psi\colon \T\times V_{0} \to  \pi\inv(V_{0})$ 
     and, for each $i\in\set{
     1
     ,\ldots,n}$, 
     $\phi_i\colon \T\times W_{i}\to \pi\inv(W_{i})\subset \E_{S}$ such that the diagrams
    \begin{equation}\label{diags:psi and phi_i}
    \begin{tikzcd}[column sep = small]
        \T\times V_{0} \ar[rr, "\psi"]\ar[rd,"\mathrm{pr}_{2}"'] && \pi\inv(V_{0})\ar[ld, "\pi"]
        \\
        & V_{0}
    \end{tikzcd}
    \quad\text{and}\quad
    \begin{tikzcd}[column sep = small]
        \T\times W_{i} \ar[rr, "\phi_{i}"]\ar[rd,"\mathrm{pr}_{2}"'] && \pi\inv(W_{i})\ar[ld, "\pi"]
        \\
        & W_{i}
    \end{tikzcd}
    \end{equation}
    commute.
    Since $W_0=U\subset \go$,  we may choose for $i=0$ the trivialization $\phi_0\colon \T\times W_{0}\to \pi\inv(W_0)$ given by $\phi_0 (z,u)=\iota(z,u)$.
    For $i\in\set{0,\ldots, n}$,  the following maps are injective and continuous:
    \begin{align}
    \sigma_{i}(\cdot)&\colon U\to \pi\inv (W_{i})
    &\text{ defined by }&&\sigma_{i}(u)&\coloneq \phi_{i} (1,s_{i}(u)), \text{ and}
    \\
     e_{i}(\cdot)&\colon U \to \pi\inv(V_i)
    &\text{ defined by }&&
        e_{i}(u) &\coloneq \sigma_{i}(u)\inv \psi (1,g_{0}(u))\sigma_{i}(u).
    \end{align} 
    Since each $\sigma_i(\cdot)$ takes values in $\E_{S}$, the $e_{i}(u)$ are, by their very definition, $\E_S$-conjugates of each other. We therefore have 
    the following equality of sets:
        \begin{equation}
            \operatorname{Ad}_{S}(e_i(u))=\operatorname{Ad}_{S}(e_j(u)).
        \end{equation}
     For any $i$, commutativity of the diagrams at~\eqref{diags:psi and phi_i} yields that
     \begin{align*}
         \pi(e_{i}(u))
         &=
         \pi\left(
         \sigma_{i}(u)\inv \psi (1,g_{0}(u))\sigma_{i}(u)
         \right)
         &&\text{by definition of $e_{i}(u)$}
         \\
         &=
         \pi\left(
         \phi_{i} (1,s_{i}(u))
         \right)
         \inv 
         \pi\left(\psi (1,g_{0}(u))\right)
         \pi\left(
         \phi_{i} (1,s_{i}(u))
         \right)
         &&\text{by definition of $\sigma_{i}(u)$}
         \\
         &=
         s_{i}(u)\inv g_0(u) s_{i}(u)
         &&\text{by~\eqref{diags:psi and phi_i}}
         \\
         &=g_{i}(u)
         &&\text{by Equation~\eqref{eq:s_i conjugate}.}
     \end{align*}
     Thus, for all $i\neq j$, it follows from $\pi(e_{i}(u))=g_{i}(u)\neq g_{j}(u)=\pi(e_{j}(u))$ that 
    $e_{i}(u)\neq e_{j}(u)$. Since~\eqref{eq:property of tilde-X,new} implies for each $u\in U$ that
    \[
    \Omega_{S}(e_{i}(u))=\Omega_{S}(e_{0}(u))
    =
    |\operatorname{Ad}_{S}(g_0(u))|
    =
    n+1,
    \]
    we conclude that 
    \begin{align}\label{eq:Ad of e_i = of e_0}
        \operatorname{Ad}_{S}(e_i(u))=\operatorname{Ad}_{S}(e_0(u))
        =
        \set{e_{0}(u),\ldots, e_{n}(u)}.
    \end{align} 

For each $i\in\set{0,\ldots,n}$, we define a $\T$-equivariant homeomorphism
\[
\Psi_{i}\colon \pi\inv(V_{i})\to \pi\inv(V_0)
\quad\text{ by }\quad
e\mapsto \sigma_{i}(r(e)) \, e \, \sigma_{i}(r(e))\inv.
\]
Note that every element $e$ of $\pi\inv(V_{i})$ can be written as $e=z\cdot e_{i}(u)$ for some unique $z\in\T$ and $u=r(e)\in r(V_{i})= U$, since $\pi(e)\in V_{i}\cap G(u)=\set{g_{i}(u)}$.
Since we assumed that $s_0(u)=u$
and $\phi_0 (z,u)=\iota(z,u)$ for all $u$ and $z\in\T$,
we further have that $\sigma_0(u)=
\iota(1,u) \in \E\z
$ for all~$u$, so that we can write $\Psi_{i}$ as follows:
\begin{align}
\Psi_{i}(z\cdot e_{i}(u))
&= 
z\cdot 
\sigma_{i}(u) e_{i}(u) \sigma_{i}(u)\inv
&&\text{by definition of $\Psi_{i}$}
\notag
\\&= 
z\cdot
\psi (1,g_{0}(u))
&&\text{by definition of 
$e_{i}(\cdot)$
}
\notag
\\
&=
z\cdot
\sigma_{0}(u) e_{0}(u)
\sigma_{0}(u)\inv
&&
\text{by definition of
$e_{0}(\cdot)$
}
\\
\label{eq:Psi_i}
&=
z\cdot e_0(u)
&&\text{%
since $\sigma_0(u)=\iota(1,u)$.
}%
\end{align}

Now assume that $\sigma\in \E_{S}(u)$, so it is composable with $e=z\cdot e_i(u)\in\pi\inv(V_{i})$. Then it
follows from Equations~\eqref{eq:Ad of e_i = of e_0} and~\eqref{eq:Ad of ze} that
\[
    \sigma\inv e \sigma 
    \in
    \operatorname{Ad}_{S}(e)
    =
    \set{z\cdot e_{0}(u),\ldots, z\cdot e_{n}(u)}.
\]
On the other hand, by Equation~\eqref{eq:sqcup S_i}, we have $s_{i}(u)\pi(\sigma)\in S_{j}(u)$ for a unique $j$, so that 
\eqref{eq:S_i(u), def} implies
\begin{align*}
    \pi(\sigma\inv e\sigma)
    &=
    \pi(\sigma)\inv g_{i}(u) \pi(\sigma)
    &&\text{since $\pi(e)=\pi(e_i(u))$}
\\
    &=
    \pi(\sigma)\inv (s_{i}(u)\inv g_{0}(u) s_{i}(u)) \pi(\sigma)
    &&
    \text{by~\eqref{eq:s_i conjugate}}
    \\
    &=
    g_{j}(u)
    &&
    \text{since $s_{i}(u) \pi(\sigma)\in S_{j}(u)$ and by~\eqref{eq:s_i conjugate}}
    &&
    \\
    &=
    \pi(e_{j}(u))&&
    \text{by~\eqref{diags:psi and phi_i}}.
\end{align*}
In total, we have that
\[
    \sigma\inv e\sigma \in \set{z\cdot e_{0}(u),\ldots, z\cdot e_{n}(u)} \cap \T\cdot e_{j}(u),
\]
so $\sigma\inv e \sigma = z\cdot e_{j}(u)$. Hence by~\eqref{eq:Psi_i}, $\Psi_{j}(\sigma\inv e \sigma) = z\cdot e_{0}(u) = \Psi_{i}(e)$.
\hfill\qedhere\text{ (Lemma~\ref{lem:fancy lemma})}   
\end{proof}

Let us record the conclusion of Claim~\ref{claim:= n+1}, appearing in the proof of Lemma~\ref{lem:fancy lemma}.
\begin{lemma}\label{lem:phew}
    Assume that $S\subset \Iso{G}$
   is abelian, and that $X$ is a non-empty open subset of $\Iso{\E}$ with $X\cap \E_{S}=\emptyset$. If there exists $N\in\{0,1,\ldots\}$ such that
    \begin{align}\notag
    N < \left|
    \operatorname{Ad}_{S}(\pi(e))
    \right|
        &=
        \left| 
        \operatorname{Ad}_{S}(e)
        \right|
        < \infty
        &\text{for all }e\in X,
    \intertext{
    then there exists
    $n\geq N$
    and a non-empty open set $\tilde{X}\subset X$ such that}   
    n+1
    =
    \left|
    \operatorname{Ad}_{S}(\pi(e))
    \right|
        &=
        \left| 
        \operatorname{Ad}_{S}(e)
        \right|
        &\text{for all }e\in \tilde{X}.
    \end{align}
\end{lemma}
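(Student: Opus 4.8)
The statement of Lemma~\ref{lem:phew} is, up to notation, exactly the combination of Claims~\ref{claim:lsc} and~\ref{claim:= n+1} as established inside the proof of Lemma~\ref{lem:fancy lemma}: the hypotheses imposed there on $S$ (abelian, contained in $\Iso{G}$), on $X$ (nonempty, open in $\Iso{\E}$, disjoint from $\E_{S}$), and on $N$ coincide with the present ones, and those two claims do not use any of the trivialisation data introduced later in that proof. So one admissible proof is a single sentence citing Claim~\ref{claim:= n+1}. For self-containedness, here is the plan of the argument one extracts.

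First I would normalise $X$. Since $\operatorname{Ad}_{S}(z\cdot e)=\set{z\cdot e':e'\in\operatorname{Ad}_{S}(e)}$ for all $z\in\T$, the two cardinalities in the hypothesis \eqref{eq:new:property of X} are $\T$-invariant, so I may replace $X$ by $\pi\inv(\pi(X))$. Because $|\operatorname{Ad}_{S}(e)|<\infty$ forces $s(e)\in S\z$, the set $U\coloneq r(X)=s(X)$ is an open subset of $S\z\subset\go$; after shrinking $X$ while keeping it $\T$-saturated, I may assume $V_{0}\coloneq\pi(X)$ is an open bisection, which by Lemma~\ref{lem:int is sbgpd}\ref{item:int of IsoE vs int of IsoG} together with $X\cap\E_{S}=\emptyset$ lies in $\Int[G]{\Iso{G}}\setminus S$. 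Writing $g_{0}(\cdot)\colon U\to V_{0}$ for the resulting homeomorphism, we have $|\operatorname{Ad}_{S}(g_{0}(u))|=|\operatorname{Ad}_{S}(\pi(e))|=|\operatorname{Ad}_{S}(e)|$ for each $e\in X$ with $r(e)=u$, since $\pi(e)$ is then the unique element of the bisection $V_{0}$ with range $u$.

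The crux, and what I expect to be the main obstacle, is the lower semicontinuity of $u\mapsto|\operatorname{Ad}_{S}(g_{0}(u))|$ on $U$ (this is Claim~\ref{claim:lsc}). Fix $\overline{u}\in U$ and enumerate $\operatorname{Ad}_{S}(g_{0}(\overline{u}))=\set{g_{0}(\overline{u}),g_{1},\dots,g_{m}}\subset G(\overline{u})$ with the listed elements pairwise distinct; for $i\in\set{1,\dots,m}$ pick $s_{i}\in S(\overline{u})$ with $s_{i}\inv g_{0}(\overline{u})s_{i}=g_{i}$, and set $s_{0}\coloneq\overline{u}$. Using that $G$ is \etale\ and Hausdorff, I would propagate these data to a neighbourhood of $\overline{u}$: small open bisections around the $g_{i}$ and the $s_{i}$ give continuous sections $g_{i}(\cdot)$ and $s_{i}(\cdot)$ over a neighbourhood $\tilde U$ of $\overline{u}$ with $g_{i}(\overline{u})=g_{i}$ and $s_{i}(\overline{u})=s_{i}$; shrinking $\tilde U$ so that the $g_{i}(u)$ stay pairwise distinct, and so that the continuous maps $u\mapsto s_{i}(u)\inv g_{0}(u)s_{i}(u)g_{i}(u)\inv$ take values in the open set $\go$, yields $s_{i}(u)\inv g_{0}(u)s_{i}(u)=g_{i}(u)$ for all $u\in\tilde U$. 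Hence $\set{g_{0}(u),g_{1}(u),\dots,g_{m}(u)}$ is an $(m+1)$-element subset of $\operatorname{Ad}_{S}(g_{0}(u))$, so $|\operatorname{Ad}_{S}(g_{0}(u))|\geq m+1=|\operatorname{Ad}_{S}(g_{0}(\overline{u}))|$ on $\tilde U$.

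Finally I would close the argument with a Baire category step. For $n\geq N$ put $C_{n}=\set{u\in U:|\operatorname{Ad}_{S}(g_{0}(u))|\leq n+1}$; lower semicontinuity makes each $C_{n}$ closed in $U$, and \eqref{eq:new:property of X} gives $U=\bigcup_{n\geq N}C_{n}$. As $U$ is a nonempty open subset of the \LCH\ space $\go$, it is a Baire space, so some $C_{n}$ has nonempty interior; let $n\geq N$ be the least such index. Any $\overline{u}\in\Int[\go]{C_{n}}$ then satisfies $|\operatorname{Ad}_{S}(g_{0}(\overline{u}))|\leq n+1$ by membership in $C_{n}$ and $>n$ by minimality of $n$, so $|\operatorname{Ad}_{S}(g_{0}(\overline{u}))|=n+1$; lower semicontinuity supplies a neighbourhood $\tilde U\subset U$ of $\overline{u}$ with $|\operatorname{Ad}_{S}(g_{0}(u))|\geq n+1$ on $\tilde U$, whence $|\operatorname{Ad}_{S}(g_{0}(u))|=n+1$ on the nonempty open set $\tilde U\cap\Int[\go]{C_{n}}$. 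Setting $\tilde X\coloneq\set{e\in X:r(e)\in\tilde U\cap\Int[\go]{C_{n}}}$ --- a nonempty open subset of $X$ --- and using once more that $\pi(e)$ is the unique element of $V_{0}$ with range $r(e)$, we obtain $|\operatorname{Ad}_{S}(\pi(e))|=|\operatorname{Ad}_{S}(e)|=n+1$ for every $e\in\tilde X$, which is the asserted conclusion.
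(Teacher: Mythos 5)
Your proposal is correct and is, as you yourself observe, essentially the paper's own argument: the paper introduces Lemma~\ref{lem:phew} merely as a record of Claims~\ref{claim:lsc} and~\ref{claim:= n+1} from the proof of Lemma~\ref{lem:fancy lemma}, and your normalisation of $X$ to sit over a bisection $V_{0}$, the lower-semicontinuity argument via propagated sections $g_{i}(\cdot)$, $s_{i}(\cdot)$ and the maps $u\mapsto s_{i}(u)\inv g_{0}(u)s_{i}(u)g_{i}(u)\inv$, and the concluding Baire category step all coincide with the paper's.

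One small slip in the final step: it is not true that \emph{every} $\overline{u}\in\Int[\go]{C_{n}}$ satisfies $|\operatorname{Ad}_{S}(g_{0}(\overline{u}))|>n$ ``by minimality of $n$.'' Minimality only says that $C_{n-1}$ has empty interior, which does not prevent individual points of $\Int[\go]{C_{n}}$ from lying in $C_{n-1}$ (the value could dip below $n+1$ on a set with empty interior). What is true, and is all you need, is that \emph{some} $\overline{u}\in\Int[\go]{C_{n}}$ has value exactly $n+1$: otherwise the nonempty open set $\Int[\go]{C_{n}}$ would be contained in $C_{n-1}$, giving $\Int[\go]{C_{n-1}}\neq\emptyset$ and contradicting minimality (and for $n=N$ this would directly contradict the hypothesis that the value exceeds $N$ on all of $U$). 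Anchoring $\tilde U$ at such a point, the rest of your argument goes through unchanged.
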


\begin{proof}[Proof of Proposition~\ref{prop:max_new}]

We have already seen in Corollary~\ref{cor:S max in terms of Omega} that \ref{it:prop:max:S_April23} and \ref{it:prop:max:S in terms of Omega + maximal} are equivalent, and in Example~\ref{ex-abelian} that $\E_{S}$ is abelian  if and only if $B\coloneq i(\Cst_{r} (S;\E_{S}))$ is abelian.
In particular, we can assume that $\E_{S}$ is an abelian  subset of $ \Iso{\E}$, and we only have to check that maximality of $B$ is equivalent to 
  the two conditions
  \txtrepeat{both-conditions}%

  $\lnot$\ref{it:prop:max_new:B}$\implies\lnot$\ref{it:prop:max:S_April23}: 
  Suppose $B$ is not maximal abelian in $A\coloneq \Cst_{r} (G;\E)$, meaning there exists $a\in A\setminus B$ such that $ab = ba$ for all $b\in B$. Let $h\coloneq j_{G}(a)$.
  Since $a\notin B$ and since $S$ is closed in $G$, Proposition~\ref{prop:support in S} asserts that 
  $\suppo(h)\not\subset \E_{S}$.
  
To show $\lnot$\ref{it:prop:max:S_April23}, we may assume that $S$ satisfies Condition~\eqref{eq:thm:May17:Int Omega=1}; we claim that Condition~\eqref{eq:thm:May17:icc} fails because of 
(an open subset of) 
the set $X\coloneq \suppo(h)\cap \E\setminus\E_{S}$.
Since $S$ is closed, $X$ is an open subset of $\E$, and by the first paragraph, $X$ is non-empty. 

By Condition~\eqref{eq:thm:May17:Int Omega=1}, we have $\go\subset S$; in particular,
since $a$ commutes with $B$ and $\go\subset S$, $a$ commutes with $C_0(\go)$, so \cite{ren:irms08}*{Theorem 4.2(a)} implies that $\suppo(h)\subset  \Iso{\E}$, so $X\subset \Iso{\E}$. We thus may evaluate $\Omega_{S}$ at elements of $X$.

We claim that $\Omega_{S}$ does not take on the value $\infty$ on $X$.
Fix $e\in X$; we will first prove that $\pi$ is injective on $\operatorname{Ad}_{S}(e)$.
  If $\sigma\in\E_{S}(r(e))$ is such that $\pi(\sigma\inv e \sigma)=\pi(e)$, then there exists $z\in\T$ such that $\sigma\inv e \sigma = z\cdot e$. Using the assumption that $a$ commutes with every element of $B$, Lemma~\ref{lem:conjugation equality} 
  and $\T$-equivariance of $h=j_{G}(a)$ imply that
  \begin{equation}
      h(e) \overset{\text{L.~\ref{lem:conjugation
      equality}}}{=} h(\sigma\inv e \sigma) = h(z\cdot e) = zh(e).
  \end{equation}
   As
  $e\in X$, we have
  $h(e)\neq 0$, so the above
  implies $z=1$, so $\sigma\inv e\sigma=  e$. Thus,
  $\pi$ is injective on $\operatorname{Ad}_{S}(e)$. 
  
  To conclude that $\Omega_{S}(e)$ is finite, it remains to show that $|\operatorname{Ad}_{S}(\pi(e))|<\infty$.
  Because $h \in C_0 (G;\E)$,
  we may find a compact $K\subset  G$ such that
    \begin{equation}\label{eq:h(e')_new}
    \abs{h  (e')}
    < \abs{h  (e)}
    \text{ for all }e'\in \pi\inv(G\setminus K).
    \end{equation}
    We claim that $I\coloneq \operatorname{Ad}_{S}(\pi(e))$ is contained in the compact set $K$. Indeed, if $e'\in \pi\inv(I)$, then
    $e'=z\cdot \sigma e \sigma\inv$ for some $z\in\T$ and some
    $\sigma\in \E_{S}$, and so
    \[
        |h (e')|
        =
        |z\, h (\sigma e \sigma\inv)|
        =
        |h (\sigma e \sigma\inv)|\overset{\text{L.~\ref{lem:conjugation
      equality}}}{=}|h (e)|.
    \]
    Combined with Equation~\eqref{eq:h(e')_new}, this shows
    that
    $\pi\inv(I)\subset  \E\setminus\pi\inv(G\setminus K) =
    \pi\inv(K)$. In other words, $I=\pi(\pi\inv(I))$ is contained in
    the compact set $K$. Since $I$ is also a subset of the discrete
    set $G(r(e))$, $I$ is finite,     
    as claimed.
    Since $\go\subset S$ furthermore implies that $\Omega_{S}(e)>0$ for all $e\in\Iso{\E}$, we have shown all in all that
    \begin{align}
    0 < \left|
    \operatorname{Ad}_{S}(\pi(e))
    \right|
        &=
        \left| 
        \operatorname{Ad}_{S}(e)
        \right|
        < \infty
        &\text{for all }e\in X.
    \intertext{By Lemma~\ref{lem:phew}, there thus exists
    $n\geq 0$
    and a non-empty open set $\tilde{X}\subset X$ such that}   
    n+1
    =
    \left|
    \operatorname{Ad}_{S}(\pi(e))
    \right|
        &=
        \left| 
        \operatorname{Ad}_{S}(e)
        \right|
        &\text{for all }e\in \tilde{X}.
    \end{align}
    Note that $n$ must be larger than $0$: otherwise, $\tilde{X}$ would be contained in $\Int[\E]{\Omega_{S}\inv (\set{1})}$, which by Condition~\eqref{eq:thm:May17:Int Omega=1} coincides with $\E_{S} \subset \E\setminus X$. In other words, we have
    \[
    \tilde{X}
    \subset 
    \Omega_{S}\inv (\Z_{>1}).
    \]
    Since $\tilde{X}$ is open and non-empty, this shows that Condition~\eqref {eq:thm:May17:icc} fails.

\smallskip

$\lnot$\ref{it:prop:max:S_April23}$\implies\lnot$\ref{it:prop:max_new:B}:
Assume first that 
Condition~\eqref{eq:thm:May17:Int Omega=1} fails. By Corollary~\ref{cor:S max in terms of Omega}, this means that there exists an open subgroupoid $T$ of $\Int[G]{\Iso{G}}$ whose twist $\E_{T}$ is abelian and such that $S\subsetneq T$. 
We have by Proposition~\ref{prop:support in S} that
\[
B=i(\Cst_{r}(S;\E_{S}))
\subset
\set{a\in \Cst_{r}(G;\E) : \suppo(j_{G}(a))\subset \E_{S}}.
\]
In particular, $i(C_c(T;\E_{T}))\not\subset B$, so $B$
is a proper subalgebra of the commutative $i(\Cst_{r}(T;\E_{T}))$ and $B$ is hence not maximal abelian.

    Now assume that Condition~\eqref{eq:thm:May17:icc} fails. Let $n\in\Z_{>0}$, $V_0,\ldots,V_n$, and $\Psi_{0},\ldots, \Psi_{n}$ be as in Lemma~\ref{lem:fancy lemma}.
    Fix $e_{0}\in \pi\inv(V_0)$ and some precompact open $V$ around $\pi(e_0)$ such that $V\subset\overline{V}\subset V_{0}$. Pick an $h_{0}\in C_c(G;\E)$ with $\pi(\suppo(h_0))\subset V$ and such that $h_{0}(z\cdot e_0)=z$ for all $z\in \T$. For each $1\leq i\leq n$, define at $e\in \E$
    \[
        h_{i}(e)    
        =
        \begin{cases}
            h_{0} (\Psi_{i}(e)),
            &
            \text{if }e\in \pi\inv(V_{i}),
            \\
            0, & \text{otherwise}.
        \end{cases}
    \]
    By assumption on the support of $h_{0}$, this is continuous and compactly supported, and since $\Psi_i$ and $h_{0}$ are $\T$-equivariant, so is $h_{i}$. In other words, $h_{i}\in C_{c}(G;\E)$.

    Let $h\coloneq \sum_{i=0}^{n} h_{i}\in C_c(G;\E)$. Since the $V_{i}$'s are mutually disjoint, so are the $\pi\inv(V_{i})$'s. In particular $\suppo(h)=\bigsqcup_{i=0}^{n}\suppo(h_{i})$, which contains the point $e_{0}\in \pi\inv(V_{0})\subset  \E\setminus \E_{S}$. 
    By Proposition~\ref{prop:support in S}, this means 
that $h\notin B$.     
    We claim that $h$ commutes with $B$, proving that $B$ is  not maximal abelian, as desired.

    Since $\suppo(h)\subset \bigsqcup_{i=0}^{n}\pi\inv(V_i)$ and since $V_i\subset \Iso{G}$ by construction (Lemma~\ref{lem:fancy lemma}), we have $\suppo(h)\subset \Iso{\E}$. Thus, according 
    to Lemma~\ref{lem:conjugation equality}, it suffices to show that, for any $u\in\go$,
     $e\in\E(u)$, and $\sigma\in\E_{S}(u)$, we have $h(e)=h(\sigma\inv e \sigma)$. If $h(e)\neq 0$, then there exists a unique $i\in\set{0,\ldots,n}$ such that $e\in \pi\inv(V_i)$.
    By construction of the maps $\Psi_{0},\ldots,\Psi_{n}$ in Lemma~\ref{lem:fancy lemma}, there exists a unique $j\in\set{0,\ldots, n}$ such that $\sigma\inv e\sigma\in\pi\inv(V_{j})$, and we have $\Psi_{j}(\sigma\inv e\sigma)=\Psi_{i}(e)$. By definition of $h$, we conclude
    \[
    h(e)=h_0 (\Psi_i(e)) = h_0 (\Psi_j(\sigma\inv e\sigma)) = h(\sigma\inv e\sigma),
    \]
    as needed. Replacing $e$ with $\sigma e\sigma\inv$ in this argument proves that indeed, $h(e)=h(\sigma\inv e \sigma)$ for all $\sigma$.
\end{proof}

The following will help us prove Theorem~\ref{thm:May17}.

\begin{cor}[of Proposition~\ref{prop:max_new}]\label{cor:Breg}
  If $i(\Cst_{r}(S;\E_{S}))$ is regular in $\Cst_{r}(G;\E)$, then the following statements are equivalent.
  \begin{enumerate}[label=\textup{(\roman*)}]
      
    \item\label{it:cor:Breg:B} $i(\Cst_{r}(S;\E_{S}))$ is a Cartan subalgebra of $\Cst_{r}(G;\E)$.
    \item\label{it:cor:Breg:S:Omega} $S$ satisfies Conditions~\eqref{eq:thm:May17:Int Omega=1} and~\eqref{eq:thm:May17:icc}, and $S$ is closed in $G$.\item\label{it:cor:Breg:S:Omega+max} $S$ is maximal among open subgroupoids of $\Int[G]{\Iso{G}}$ for which
      $\E_{S}$ is abelian, satisfies Condition~\eqref{eq:thm:May17:icc}, and $S$ is closed in $G$.
  \end{enumerate}
\end{cor}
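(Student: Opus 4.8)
The corollary is a formal consequence of Proposition~\ref{prop:max_new}, Corollary~\ref{cor:S max in terms of Omega}, Proposition~\ref{prop:CondExp}, and the fact --- recorded in Remark~\ref{rmk:G amenable} and due to \cite{BEFPR:2021:Intermediate}*{Lemma~3.4} --- that a conditional expectation from $\Cst_r(G;\E)$ onto $B\coloneq i(\Cst_r(S;\E_S))$ exists precisely when $S$ is closed in $G$. Write $A\coloneq\Cst_r(G;\E)$, and recall that $B$ is assumed regular in $A$ throughout. The plan is to first dispose of \ref{it:cor:Breg:S:Omega}$\iff$\ref{it:cor:Breg:S:Omega+max}: both statements stipulate that $S$ is closed and that $S$ satisfies Condition~\eqref{eq:thm:May17:icc}, and by Corollary~\ref{cor:S max in terms of Omega} the outstanding requirement in \ref{it:cor:Breg:S:Omega}, namely Condition~\eqref{eq:thm:May17:Int Omega=1}, is exactly equivalent to the outstanding requirement in \ref{it:cor:Breg:S:Omega+max}, namely that $S$ be maximal among open subgroupoids of $\Int[G]{\Iso{G}}$ for which $\E_S$ is abelian; no closedness hypothesis enters this part.

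Next I would prove \ref{it:cor:Breg:S:Omega}$\implies$\ref{it:cor:Breg:B}. Assuming \ref{it:cor:Breg:S:Omega}, the subgroupoid $S$ is clopen, so Proposition~\ref{prop:max_new} applies and Conditions~\eqref{eq:thm:May17:Int Omega=1} and~\eqref{eq:thm:May17:icc} give that $B$ is maximal abelian in $A$. Condition~\eqref{eq:thm:May17:Int Omega=1} also forces $S\z=\go$, so Proposition~\ref{prop:CondExp}\ref{it:CondExp:faithful} supplies a faithful conditional expectation $i\circ\Phi\colon A\to B$; since $B$ is regular by hypothesis, $B$ is Cartan by Definition~\ref{def-cartan}. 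For the converse \ref{it:cor:Breg:B}$\implies$\ref{it:cor:Breg:S:Omega}: if $B$ is Cartan then in particular a faithful conditional expectation $A\to B$ exists, so by \cite{BEFPR:2021:Intermediate}*{Lemma~3.4} the subgroupoid $S$ must be closed, hence clopen. Now Proposition~\ref{prop:max_new} is available; since a Cartan subalgebra is maximal abelian, the equivalence \ref{it:prop:max_new:B}$\iff$\ref{it:prop:max:S_April23} of that proposition yields that $S$ satisfies Conditions~\eqref{eq:thm:May17:Int Omega=1} and~\eqref{eq:thm:May17:icc}. Together with closedness of $S$, this is \ref{it:cor:Breg:S:Omega}.

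The one step that is more than bookkeeping is the deduction of closedness of $S$ inside \ref{it:cor:Breg:B}$\implies$\ref{it:cor:Breg:S:Omega}: without it one cannot invoke Proposition~\ref{prop:max_new} (nor the set-equality in Proposition~\ref{prop:support in S}, on which its proof depends), and obtaining it genuinely requires the external characterization \cite{BEFPR:2021:Intermediate}*{Lemma~3.4} of when a conditional expectation onto $B$ exists. Everything else reduces to checking that the hypotheses of the already-established results are in force --- clopenness for Propositions~\ref{prop:max_new} and~\ref{prop:CondExp}, and $S\z=\go$ for the faithfulness clause in Proposition~\ref{prop:CondExp}\ref{it:CondExp:faithful}.
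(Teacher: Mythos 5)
Your proposal follows the paper's proof almost exactly: the equivalence \ref{it:cor:Breg:S:Omega}$\iff$\ref{it:cor:Breg:S:Omega+max} via Corollary~\ref{cor:S max in terms of Omega}, the direction \ref{it:cor:Breg:S:Omega}$\implies$\ref{it:cor:Breg:B} via Propositions~\ref{prop:CondExp} and~\ref{prop:max_new} together with the standing regularity hypothesis, and the converse via \cite{BEFPR:2021:Intermediate}*{Lemma~3.4} followed by Proposition~\ref{prop:max_new}. The one place where the paper is more careful than you are is at the start of \ref{it:cor:Breg:B}$\implies$\ref{it:cor:Breg:S:Omega}: before citing \cite{BEFPR:2021:Intermediate}*{Lemma~3.4} to conclude that $S$ is closed, the paper first establishes $S\z=\go$, since that lemma concerns \emph{wide} open subgroupoids. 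It does so by noting that $S\cup\go$ is an open subgroupoid of $\Int[G]{\Iso G}$ with abelian twist whose \cs-algebra contains $B$, so maximal abelianness of $B$ (via the argument of Corollary~\ref{cor:S max in terms of Omega}, \ref{item:cor:S max implies}$\implies$\ref{item:cor:S max implied by}) forces $\go\subset S$. You should insert this observation before invoking the existence-of-expectation criterion; with that one line added, your argument is complete and coincides with the paper's.
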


\begin{proof}
    Let $B\coloneq i (\Cst_{r}(S;\E_{S}))$ and $A\coloneq \Cst_{r}(G;\E)$.

    \ref{it:cor:Breg:B}$\implies$\ref{it:cor:Breg:S:Omega+max}:  
    Since $B$ is maximal abelian and since $i (\Cst_{r}(S;\E_{S}))\subset i (\Cst_{r}(T;\E_{T}))$ for any intermediate open subgroupoid $S\subset T\subset G$, it follows from Corollary~\ref{cor:S max in terms of Omega}, \ref{item:cor:S max implies}$\implies$\ref{item:cor:S max implied by}, that $S\z=G\z$. Thus, the
    existence of a conditional expectation $A\to B$ implies by \cite{BEFPR:2021:Intermediate}*{Lemma 3.4} that $S$ is closed
   in $G$.  Since $S$ is clopen in $G$ and $B$ is a Cartan subalgebra of $A$,
   it follows from Proposition~\ref{prop:max_new}, \ref{it:prop:max_new:B}$\implies$\ref{it:prop:max:S in terms of Omega + maximal},  that $S$ satisfies the remaining claimed properties.

   \ref{it:cor:Breg:S:Omega+max}$\implies$\ref{it:cor:Breg:B}:
   Since $S$ is closed, we can apply Proposition~\ref{prop:CondExp} to deduce that there exists a conditional expectation $A \to B$. Since $S\z=\go$ by Corollary~\ref{cor:S max in terms of Omega}, 
   the conditional expectation is faithful. Since $S\subset \Iso{G}$ is clopen in $G$, we can apply Proposition~\ref{prop:max_new}, \ref{it:prop:max:S in terms of Omega + maximal}$\implies$\ref{it:prop:max_new:B}, to deduce that  $B$ is
    maximal abelian in $A$. Since $B$ is assumed to be regular, we conclude that it is Cartan.

    \ref{it:cor:Breg:S:Omega}$\iff$\ref{it:cor:Breg:S:Omega+max}: This is the equivalence in Corollary~\ref{cor:S max in terms of Omega}, just with the added assumption that $S$ be closed and satisfy Condition~\eqref{eq:thm:May17:icc}.
\end{proof}

\subsection{The proof of Theorem~\ref{thm:May17}}
We now focus on the case that $S$ is normal.

\begin{lemma}\label{lem:Bisections are normalisers}
  Assume that $S\subset \Iso{G}$ is normal 
  in $G$
  and that
  $ h \in C_c(G; \E)$ has support equal to the preimage under $\pi$ of
  a bisection. Then 
  $ h * f * h^{*} \in i (C_c(S;\E_{S}))$ for all
  $ f \in i (C_c(S;\E_{S}))$.
\end{lemma}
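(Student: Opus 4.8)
The plan is to pin down where the function $g\coloneq h*f*h^{*}$ can be nonzero, show that its support is trapped inside $\E_{S}$, and then recognise $g$ as the image under $i$ of its restriction to $\E_{S}$. Throughout I use that $C_{c}(G;\E)$ is a $*$-algebra under convolution, so $g\in C_{c}(G;\E)$ to begin with, and that $f=i(\tilde f)$ for some $\tilde f\in C_{c}(S;\E_{S})$, so $\supp(f)$ is the compact set $\supp_{\E_{S}}(\tilde f)\subseteq\E_{S}$ and hence $\pi(\supp(f))\subseteq S$.

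First I would control the support of a convolution: reading~\eqref{eq:2}, a nonzero value of $a*b$ at $e'$ forces a factorisation $e'=e_{1}e_{2}$ with $e_{1}\in\supp(a)$ and $e_{2}\in\supp(b)$, so $\supp(a*b)\subseteq\supp(a)\cdot\supp(b)$, the (compact) set of admissible products. Applying this twice gives
\[
  \supp(g)\subseteq\supp(h)\cdot\supp(f)\cdot\supp(h^{*}).
\]
Writing $W\coloneq\pi(\supp(h))$, which is a bisection by hypothesis (so $\supp(h)=\pi\inv(W)$ and $\supp(h^{*})=\supp(h)\inv=\pi\inv(W\inv)$), and applying the groupoid homomorphism $\pi$, I get
\[
  \pi(\supp(g))\subseteq W\cdot\pi(\supp(f))\cdot W\inv\subseteq W\cdot S\cdot W\inv .
\]

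The crux is then to show $W\cdot S\cdot W\inv\subseteq S$. An element there is a product $w_{1}kw_{2}\inv$ with $w_{1},w_{2}\in W$, $k\in S$, and the three factors composable in order. Since $k\in S\subseteq\Iso{G}$ we have $r(k)=s(k)$, so composability forces $s(w_{1})=r(k)=s(k)=s(w_{2})$; as $s$ is injective on the bisection $W$, this forces $w_{1}=w_{2}$. Thus the element is an honest conjugate $w_{1}kw_{1}\inv$, and normality of $S$ — used in the form $wS=Sw\Rightarrow wSw\inv\subseteq S$ — places it in $S$. Hence $\supp(g)\subseteq\pi\inv\!\bigl(\pi(\supp(g))\bigr)\subseteq\pi\inv(S)=\E_{S}$.

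Finally, $g\in C_{c}(G;\E)$ vanishes off $\E_{S}$, and $\supp(g)$ is a compact subset of the open, $\T$-invariant set $\E_{S}$; therefore $g\restr{\E_{S}}\in C_{c}(S;\E_{S})$ and $i(g\restr{\E_{S}})=g$, so $g\in i(C_{c}(S;\E_{S}))$, as claimed. I do not anticipate a serious obstacle: the one step that needs care is the composability/bisection argument that upgrades the triple product $w_{1}kw_{2}\inv$ to a genuine conjugate so that normality can be invoked — this is exactly where the hypotheses that $\pi(\supp h)$ is a bisection and that $S\subseteq\Iso{G}$ are both used — together with the minor bookkeeping that $\E_{S}$ being merely open (not closed) still suffices to identify a compactly supported function supported in $\E_{S}$ with an element of $i(C_{c}(S;\E_{S}))$.
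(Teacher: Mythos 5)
Your proof is correct and follows essentially the same route as the paper's: the key step in both is that the isotropy condition $S\subset \Iso{G}$ forces the two $h$-factors in any nonzero term to have the same source, so the bisection hypothesis identifies them (you do this in $G$ via $\pi$, the paper does it in $\E$ up to a factor $z\in\T$), after which normality of $S$ absorbs the resulting conjugate. Your packaging in terms of the support containment $\supp(h)\cdot\supp(f)\cdot\supp(h)^{-1}$ and the set inclusion $W\cdot S\cdot W^{-1}\subset S$ is just a cosmetic reorganization of the paper's pointwise computation with the convolution formula.
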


\begin{proof}
The argument is essentially
unchanged from \cite{DGNRW:Cartan}*{Lemma 3.11}, one just has to account for the twist.
  Because $C_c(G;\E)$ is closed under convolution, it suffices to show
  that $ h * f * h^{*} $ is supported on $\E_S$.  For $e\in \E$, we
  have
  \begin{align*}
    h   *    f   *   h^{*} (e)
    &=
      \sum_{\pi(e_{1}) \in G^{s(e)}} \sum_{\pi(e_{2}) \in G_{s(e_1)}}
      h(e e_{1}e_{2}\inv) \, f (e_{2})  
      \,  \overline{h (e_{1})}.
      \label{Bisections are normalisers
      Equation 1}  
  \end{align*}
  Assume $e\in\suppo(h * f * h^{*} )$. Since $f$ is supported on
  $\E_S$, there exist $e_{1}\in \E^{s(e)}$ and
  $e_{2}\in (\E_{S})_{s(e_{1})}$ such that
  $ee_{1}e_{2}\inv, e_{1}\in \suppo (h)$. Since $S\subset \Iso{G}$,
  the unit $r(e_{2})=s(ee_{1}e_{2}\inv)$ coincides with
  $s(e_{2})=s(e_{1})$. Since $ee_{1}e_{2}\inv$ and $e_{1}$ 
  therefore
  have the
  same source, our assumption on the support of $h$ implies that
  $ee_{1}e_{2}\inv=z\cdot e_{1}$ for some $z\in\T $. Thus, $e=e_{1}(z\cdot e_{2})e_{1}\inv$
  is an element of $\E_{S}$
  by normality of~$S$, as claimed.
\end{proof}

We point out that, in the above proof, we used that $S$ is normal in $G$, not just in $\Iso{G}$ or $\Int[G]{\Iso{G}}$.
As a direct consequence of Lemma~\ref{lem:Bisections are normalisers} combined with Lemma~\ref{lem-bisec-span}, we get the following (cf.\ \cite{DGNRW:Cartan}*{Proposition 3.12}).
\begin{corollary}
    \label{cor:regular if normal}
  Assume $S\subset \Iso{G}$.  If $S$ is normal in $G$, then
  \[ 
        \set{
        a\in C_{c}(G;\E):
        \pi(\suppo(a)) \text{ is a bisection}
        } 
        \subset 
        N(i (\Cst_{r}(S;\E_{S}))),
    \]
    so $ i (\Cst_{r}(S;\E_{S}))$ is regular.
\end{corollary}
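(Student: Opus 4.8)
The plan is to chain Lemma~\ref{lem:Bisections are normalisers} together with Lemma~\ref{lem-bisec-span}, following the template of \cite{DGNRW:Cartan}*{Proposition 3.12}. Write $B\coloneq i(\Cst_{r}(S;\E_{S}))$ and $A\coloneq\Cst_{r}(G;\E)$, and let $\mathcal{N}$ denote the set of $a\in C_{c}(G;\E)$ for which $\pi(\suppo(a))$ is a bisection. First I would record two elementary facts. Since $a$ is $\T$-equivariant, $\suppo(a)$ is $\T$-invariant, so $\suppo(a)=\pi\inv(\pi(\suppo(a)))$; thus every $a\in\mathcal{N}$ has support equal to the $\pi$-preimage of a bisection. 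Moreover $\suppo(a^{*})=\suppo(a)\inv$, so $a^{*}\in\mathcal{N}$ whenever $a\in\mathcal{N}$. Because $N(B)$ is defined by the pair of conditions $aBa^{*}\subset B$ and $a^{*}Ba\subset B$ (Definition~\ref{def-cartan}) and $\mathcal{N}$ is $*$-closed, it therefore suffices to prove the single inclusion $aBa^{*}\subset B$ for every $a\in\mathcal{N}$.

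Fix such an $a$. The map $x\mapsto axa^{*}$ is norm-continuous on $A$, the subalgebra $B$ is closed in $A$, and $i(C_{c}(S;\E_{S}))$ is dense in $B$; hence it is enough to check that $a\cdot i(f)\cdot a^{*}\in B$ for all $f\in C_{c}(S;\E_{S})$. Here $i(f)\in C_{c}(G;\E)$, and the $\Cst$-product agrees with convolution on $C_{c}(G;\E)$, so $a\cdot i(f)\cdot a^{*}=a*i(f)*a^{*}$. Since $\suppo(a)$ is the $\pi$-preimage of a bisection, Lemma~\ref{lem:Bisections are normalisers} applies with $h=a$ and yields $a*i(f)*a^{*}\in i(C_{c}(S;\E_{S}))\subset B$. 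This establishes $\mathcal{N}\subset N(B)$, which is the first assertion of the corollary.

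For regularity I would invoke Lemma~\ref{lem-bisec-span}: $C_{c}(G;\E)$ is the linear span of functions whose support is the $\pi$-preimage of a bisection, and each such function lies in $\mathcal{N}$ (its open support is open in $\E$ and contained in that preimage, so its image under the open map $\pi$ is an open subset of a bisection, hence a bisection). Consequently $C_{c}(G;\E)$ is contained in the linear span of $\mathcal{N}\subset N(B)$, and since $C_{c}(G;\E)$ is dense in $A$, the $\Cst$-subalgebra of $A$ generated by $N(B)$ is all of $A$. Hence $B$ is regular.

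I do not expect a real obstacle here; the corollary is essentially a repackaging of the two cited lemmas. The only points that need a little care are the bookkeeping that converts the hypothesis ``$\pi(\suppo(a))$ is a bisection'' into the literal hypothesis of Lemma~\ref{lem:Bisections are normalisers} that the support equal the $\pi$-preimage of a bisection (i.e.\ the $\T$-invariance of open supports of equivariant functions), and the routine verification that the spanning functions produced by Lemma~\ref{lem-bisec-span} belong to $\mathcal{N}$.
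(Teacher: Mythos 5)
Your proof is correct and follows exactly the route the paper intends: the paper states this corollary as a ``direct consequence'' of Lemma~\ref{lem:Bisections are normalisers} and Lemma~\ref{lem-bisec-span} without writing out the details, and you have simply supplied the (correct) bookkeeping --- the $\T$-invariance of $\suppo(a)$ identifying it with $\pi\inv(\pi(\suppo(a)))$, the $*$-closedness of the set of candidate normalizers, and the density/continuity argument reducing to $f\in i(C_c(S;\E_S))$. Nothing further is needed.
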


This allows us to finally prove our main theorem.

\pagebreak[3]
\begin{proof}[Proof of Theorem~\ref{thm:May17}]
    \ref{it:thm:May17:B:subset}$\implies$\ref{it:thm:May17:B:cap}:
    Follows from Lemma~\ref{lem-bisec-span}.

    \ref{it:thm:May17:B:cap}$\implies$\ref{it:thm:May17:S:Omega+max}: We only need to show that $\E_{S}$ is normal, since the remaining statement is contained in Corollary~\ref{cor:Breg}, \ref{it:cor:Breg:B}$\implies$\ref{it:cor:Breg:S:Omega+max}.
    So fix any $u\in S\z$, $e\in \E_{u}$, and $\sigma\in \E_{S}(u)$; we must show that $e\sigma e\inv \in \E_{S}$. 

    Since there exists an element in $\Cst_{r}(G;\E)$ that does not vanish at $e$, and since the supremum norm is bounded by the reduced norm, the stronger regularity assumption of~\ref{it:thm:May17:B:cap} implies that there exists a normalizer $n\in \Cst_{r}(G;\E)$ of $i( \Cst_{r}(S;\E_{S}))$ such that  $j_{G}(n)(e)\neq 0$ and $\pi(\suppo(j_{G}(n)))$ is a bisection.    

    Fix $f \in i( C_c(S;\E_{S}))$ with $\pi(\suppo(f ))$ a bisection and $f (\sigma)\neq 0$. Since the supports of $j_{G}(n)$ and $f$ contain $e$ respectively $\sigma$ and are mapped to bisections under $\pi$, an easy computation and application of Proposition~\ref{prop-bfpr} shows that 
    \[
     j_{G}(nfn^*)(e\sigma e\inv) =
    (j_{G}(n) * f * j_{G}(n)^*) (e \sigma e\inv) = |j_{G}(n)(e)|^2 f (\sigma).
    \]
    This is non-zero by choice of $n$ and $f $. Since $n$ is a normalizer,  $n f  n^*$ is an element of the subalgebra, which by Proposition~\ref{prop:support in S} implies that $\suppo(j_{G}(nfn^*)) \subset  \E_{S}$; in particular, $e \sigma e\inv \in \E_{S}$, as claimed.

    \ref{it:thm:May17:S:Omega+max}$\implies$\ref{it:thm:May17:B:subset}:
    We have seen in Corollary~\ref{cor:regular if normal} that $B$ is regular since $S$ is normal, so we can apply Corollary~\ref{cor:Breg}, \ref{it:cor:Breg:S:Omega+max}$\implies$\ref{it:cor:Breg:B}, to deduce that $B$ is Cartan. Corollary~\ref{cor:regular if normal} furthermore shows that 
    \[
        \set{
        h\in C_{c}(G;\E):
        \pi(\suppo(h)) \text{ is a bisection}
        } \subset N(B),
    \]
    which is the remaining part of \ref{it:thm:May17:B:subset}.

    \ref{it:thm:May17:S:Omega}$\iff$\ref{it:thm:May17:S:Omega+max}: This follows from  the equivalence in Corollary~\ref{cor:S max in terms of Omega}, since we merely added the same extra assumptions on $S$ to both statements. 
\end{proof}

    Not every Cartan subalgebra in a groupoid $\Cst$-algebra comes from a subgroupoid as in Theorem~\ref{thm:May17}, as the following example by Ian Putnam shows.

\begin{example}[non-exhaustive]
    Consider $G= \mathfrak{S}_{3}$, the symmetric group on $3$ elements. Its (untwisted, reduced) group $\Cst$-algebra $\Cst_{r}( \mathfrak{S}_{3})$ must be non-commutative and, as a vector space, be of dimension $6$. In other words, we must have $\Cst_{r}( \mathfrak{S}_{3}) = \C  \oplus M_2(\C) \oplus \C$. If $D_{2}$ denotes the diagonal subalgebra of $M_{2}(\C)$, then $\C \oplus D_{2}\oplus\C$ is a Cartan subalgebra of $\Cst_{r}( \mathfrak{S}_{3})$ (actually, it is a $\Cst$-diagonal). However, by Lagrange's theorem, no subgroup of $ \mathfrak{S}_{3}$ is of order $4$, so this Cartan subalgebra cannot have arisen as $\Cst_{r}(S)$ for $S\leq  \mathfrak{S}_{3}$.
\end{example}

\section{Corollaries and Applications}\label{sec:applications}

\begin{example}[The effective case]
We recover \cite{ABOCLMR:2023:Local_Bisection-pp}*{Theorem 3.1, $(1)\iff(2)$} and, in particular, \cite{ren:irms08}*{Theorem 5.2}: for a \LCH, \etale\ groupoid $G$, effectiveness is equivalent to $i(C_0(\go))$ being a Cartan subalgebra of $\Cst_r(G;\E)$ for any twist $\E$. 

To see this, fix a twist $\E$ and an element $e\in\Int[\E]{\Iso{\E}}$. Since $ \E_{\go}=\pi\inv(\go)=\iota(\go\times\T)$ and since $\iota$ is central, we have
\[
    \operatorname{Ad}_{\E,\go}(e)
    =
    \set{
        \tau\inv e \tau 
        :\tau\in \E_{\go}
    }
    =
    \set{e},
    \text{ and }
    \operatorname{Ad}_{\E,\go}(\pi(e))=\set{\pi(e)},
\]
where we have added the extra subscript-$\E$ to emphasize the dependence of the set on the twist. This means that any element $e\in\Int[\E]{\Iso{\E}}$ satisfies $\Omega_{\E,\go}(e)=1$. Thus,
\begin{equation}\label{eq:effective case}
    \Int[\E]{\Omega_{\E,\go}\inv(\set{1})}
    = \Int[\E]{\Iso{\E}}
    \quad\text{and}\quad
    \Int[\E]{\Omega_{\E,\go}\inv(\Z_{>1})}
    = \emptyset.
\end{equation}
Since $G\z$ is always closed and a normal subgroupoid of $G$ (it is just a space), and since $G\z$ is also open by the \etale\ assumption, we can use
Corollary~\ref{cor:Breg} (in tandem with Corollary~\ref{cor:regular if normal})
to get the following chain of equivalences:
\begin{align*}
    G\text{ effective }
    &\overset{\text{def}}{\iff} 
    G\z=\Int[G]{\Iso{G}}
    \overset{\text{L.~\ref{lem:int is sbgpd}}}{\iff}
    \text{For any }\E, \E_{G\z} = \Int[\E]{\Iso{\E}}
    \\&
    \overset{\eqref{eq:effective case}}{\iff} 
    \text{For any }\E, 
    \E_{G\z}=
    \Int[\E]{\Omega_{\E,\go}\inv(\set{1})}
    \text{ and }    \Int[\E]{\Omega_{\E,\go}\inv(\Z_{>1})}
    = \emptyset
    \\
    &\overset{
    \ref{cor:Breg}
    }{\iff}
    \text{For any }\E, 
    i(C_0(\go))\text{ is a Cartan subalgebra of }\Cst_r(G;\E).
\end{align*}

\end{example}

The following corollary of
Theorem~\ref{thm:May17}
allows us to avoid checking the technical Condition~\eqref{eq:thm:May17:icc} directly.

\begin{corollary}\label{cor:formerly thm:main}
  Let 
  $\E$ be a twist 
  over a \LCH, \etale\ groupoid~$G$.  Assume that $S$ is maximal among open subgroupoids of
  $\Int[G]{\Iso{G}}$ for which $\E_{S}$ is abelian, and that $S$ is
  closed and normal in $G$. If we have
\begin{equation}   
    \label{eq:icc}
        \forall u\in \go, g\in G(u)\cap \Int[G]{\Iso{G}}:
        |\set{t\inv g t: t\in S(u)}|\in \set{1,\infty},
\end{equation} 
  then
  $i(\Cst_{r}(S;\E_{S}))$ is a Cartan subalgebra of $\Cst_{r}(G;\E)$.
\end{corollary}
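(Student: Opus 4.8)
The plan is to deduce the corollary from Theorem~\ref{thm:May17}, via the implication \ref{it:thm:May17:S:Omega+max}$\implies$\ref{it:thm:May17:B:subset}. All the hypotheses appearing in \ref{it:thm:May17:S:Omega+max} are assumed here --- $S$ is maximal among open subgroupoids of $\Int[G]{\Iso{G}}$ for which $\E_{S}$ is abelian, and $S$ is closed and normal in $G$ --- with the sole exception of Condition~\eqref{eq:thm:May17:icc}, namely $\Int[\E]{\Omega_{S}\inv(\Z_{>1})}=\emptyset$. So the entire content of the proof is to show that Condition~\eqref{eq:icc} implies Condition~\eqref{eq:thm:May17:icc}; once that is done, Theorem~\ref{thm:May17}\ref{it:thm:May17:B:subset} gives that $i(\Cst_{r}(S;\E_{S}))$ is a Cartan subalgebra of $\Cst_{r}(G;\E)$ (in fact with the extra normalizer property, which is more than we need here).

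First I would unwind the definitions. For $e\in\Iso{\E}$ with $u\coloneq r(e)\in S\z$, since $\pi$ is a surjective groupoid homomorphism with $\pi(\E_{S}(u))=S(u)$, we have $\operatorname{Ad}_{S}(\pi(e))=\pi(\set{\tau\inv e\tau:\tau\in\E_{S}(u)})=\set{t\inv\pi(e)t:t\in S(u)}$, which is exactly the set whose cardinality is constrained in~\eqref{eq:icc}. By the definition of $\Omega_{S}$ in Definition~\ref{def:Ad_S,Omega}, the condition $\Omega_{S}(e)\in\Z_{>1}$ says precisely that $1<\abs{\operatorname{Ad}_{S}(e)}=\abs{\operatorname{Ad}_{S}(\pi(e))}<\infty$; in particular $\abs{\set{t\inv\pi(e)t:t\in S(u)}}$ is a finite integer strictly larger than~$1$.

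Now suppose, for contradiction, that $\Int[\E]{\Omega_{S}\inv(\Z_{>1})}$ contains a point $e$. Since $\Omega_{S}\inv(\Z_{>1})\subset\Iso{\E}$ and $e$ is an interior point of it, we get $e\in\Int[\E]{\Iso{\E}}$, so by Lemma~\ref{lem:int is sbgpd}\ref{item:int of IsoE vs int of IsoG} we have $\pi(e)\in\Int[G]{\Iso{G}}$. Moreover $e\in\Omega_{S}\inv(\Z_{>1})$ forces $\operatorname{Ad}_{S}(e)\neq\emptyset$, hence $u\coloneq r(e)=s(e)\in S\z$, and therefore $\pi(e)\in G(u)\cap\Int[G]{\Iso{G}}$. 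By the previous paragraph, $\abs{\set{t\inv\pi(e)t:t\in S(u)}}\in\Z_{>1}$, which contradicts Condition~\eqref{eq:icc}, asserting that this cardinality lies in $\set{1,\infty}$. Hence $\Int[\E]{\Omega_{S}\inv(\Z_{>1})}=\emptyset$, i.e.\ Condition~\eqref{eq:thm:May17:icc} holds, and Theorem~\ref{thm:May17} applies.

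There is no genuine obstacle here beyond careful bookkeeping; the one point deserving a moment's attention is the observation that an \emph{interior} point of $\Omega_{S}\inv(\Z_{>1})$ automatically has its $\pi$-image in $\Int[G]{\Iso{G}}$ (and not merely in $\Iso{G}$), since that is exactly what makes the hypothesis~\eqref{eq:icc} --- which only speaks of $g\in G(u)\cap\Int[G]{\Iso{G}}$ --- applicable, and it follows at once from Lemma~\ref{lem:int is sbgpd}\ref{item:int of IsoE vs int of IsoG}.
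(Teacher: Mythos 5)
Your proposal is correct and follows essentially the same route as the paper: both reduce the corollary to Theorem~\ref{thm:May17} by showing that Condition~\eqref{eq:icc} forces $\Int[\E]{\Omega_{S}\inv(\Z_{>1})}=\emptyset$, using that an interior point of $\Omega_{S}\inv(\Z_{>1})$ lies in $\Int[\E]{\Iso{\E}}$ and hence has $\pi$-image in $G(u)\cap\Int[G]{\Iso{G}}$ where~\eqref{eq:icc} applies. The only cosmetic differences are that you argue by contradiction and invoke \ref{it:thm:May17:S:Omega+max}$\implies$\ref{it:thm:May17:B:subset} directly, whereas the paper states the containment positively and routes through \ref{it:thm:May17:S:Omega} via Corollary~\ref{cor:S max in terms of Omega}.
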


\begin{proof}
    By Corollary~\ref{cor:S max in terms of Omega}, we have $S\z=\go$, so Condition~\eqref{eq:icc} implies $\Omega_{S}(e)\in\{1,\infty\}$ for all $e\in
    \pi\inv(\Int[G]{\Iso{G}})
    =
    \Int[\E]{\Iso{\E}}$. Thus
    \[
    \Int[\E]{\Omega_{S}\inv (\Z_{>1})}
    \subset
    \set{
        e\in \Int[\E]{\Iso{\E}}:
        \Omega_{S}(e)\notin\{0,1,\infty\}
    }
    =
    \emptyset,
    \]
    meaning that Condition~\eqref{eq:thm:May17:icc} is satisfied, so the claim follows from
    Theorem~\ref{thm:May17}, \ref{it:thm:May17:S:Omega}$\implies$\ref{it:thm:May17:B:subset}.
\end{proof}

\begin{remark}[Immediately centralizing]\label{rmk:icc vs immcentr}
A subgroupoid $S$ of $G$ is called \emph{immediately centralizing} in $\Iso{G}$ if the following holds.
\begin{align}\label{eq:immcentr}
    \begin{split}
        &\forall u\in \go, g\in G(u), k\in\Z:
        \\
        &\qquad\bigl[\forall s \in S(u), \exists 1 \leq j \leq k, gs^j = s^jg\bigr] \implies \bigl[\forall s \in S(u), gs=sg\bigr].
    \end{split}
    \end{align}
In \cite{DGNRW:Cartan}*{Theorem 3.1} and \cite{BG:2023:Gamma-Cartan-pp}*{Proposition 3.4},  conditions on a subgroupoid $S$ of $G$ are listed that imply that, in the $2$-cocycle twisted $\Cst$-algebra $\Cst_{r}(G,c)$, the subalgebra  $i(\Cst_r(S,c|_{S^{(2)}}))$ is a Cartan. Like in our
Theorem~\ref{thm:May17}\ref{it:thm:May17:S:Omega+max},
$S$ was assumed to be clopen, normal, and maximal; the assumption that $S$ be immediately centralizing was then invoked to prove maximality of the subalgebra.
Let us quickly explain that Condition~\eqref{eq:immcentr} implies Condition~\eqref{eq:icc} in the last corollary, which proves that Corollary~\ref{cor:formerly thm:main} implies \cite{DGNRW:Cartan}*{Theorem 3.1} and \cite{BG:2023:Gamma-Cartan-pp}*{Proposition 3.4}.

Take $g\in G(u)\cap \Int[G]{\Iso{G}}$  such that $|\set{t\inv g t: t\in S(u)}|>1$, 
 so there exists $t\in S(u)$ with $gt\neq tg$. By Condition~\eqref{eq:immcentr} applied to any $k\geq 1$, this means there exists $s\in S(u)$ such that for all $1\leq j\leq k$ we have $gs^j\neq s^jg$; in particular, the set $\set{s^{-j}gs^{j}: 1\leq j\leq k}$ has $k$ many distinct elements. As $k$ was arbitrary, this shows that $\{t\inv g t: t\in S(u)\}$ must be infinite, or in other words $|\set{t\inv g t: t\in S(u)}|=\infty$, as claimed.
    
    In Example~\ref{ex:immcentr is bad}, we show that the converse implication is false: There are pairs of groupoids $S\leq G$ that satisfy~\eqref{eq:icc} but not~\eqref{eq:immcentr}. This shows that, even in the case of twists induced from $2$-cocycles, Corollary~\ref{cor:formerly thm:main} is stronger than its predecessors \cite{DGNRW:Cartan}*{Theorem 3.1} and \cite{BG:2023:Gamma-Cartan-pp}*{Proposition 3.4}. 
\end{remark}

\begin{example}\label{ex:immcentr is bad}
    Let $\mathfrak{S}_3$ denote the symmetric group of degree $3$ and let $\mathfrak{A}_3$ denote the alternating group, a normal abelian subgroup of $\mathfrak{S}_3$. In $\prod_{n\in  \N } \mathfrak{S}_{3}$, consider the element $h$ that has the transposition $(12)$ in each coordinate. In $\oplus_{n\in  \N } \mathfrak{A}_{3}$, consider for each $k\in\N$, the element $s_k$ that has $(123)$ in the $k$-coordinate and the identity element everywhere else. Since  $(123)\inv (12)(123)=(13)$, we see that
    \(
    s_k\inv h s_{k}
    \)
    has the entry $(13)$ in the $k$-component and $(12)$ everywhere else; in particular, $h$ does not commute with $s_{k}$. Consider the following subgroups of $\prod_{n\in  \N } \mathfrak{S}_{3}$:
    \[
        S\coloneq \langle s_{k} : k \in \N\rangle 
        \leq
        G\coloneq
        \langle S,h\rangle.
    \]
    A quick computation shows that $S=\bigoplus_{n\in\N} \mathfrak{A}_{3}$; by construction, $S$ is maximal abelian in $G$. Note that $S$ is normal in $G$ since $\mathfrak{A}_{3}$ is normal in $\mathfrak{S}_{3}$.
    We claim that $S\leq G$ satisfies~\eqref{eq:icc}. 

    Assume that $g\in G\setminus S$; it suffices to prove that the set of~$S$-conjugates of $g$ is infinite. Since $S$ is normal and $h^2=e$, we can write $g=h^{i}s$ for some exponent $i\in \{0,1\}$ and some element $s\in S$. Since $g\notin S$, we must have $i=1$. Since all but finitely components of~$S$ are trivial, we see that all but finitely components of $g$ are given by the transposition $(12)$. The same computation as that done for $h$ then shows that the set $\set{s_{k}\inv g s_{k}:k\in\N}$ is infinite. This finishes our proof that~\eqref{eq:icc} is satisfied. Thus, by Corollary~\ref{cor:formerly thm:main}, we conclude that $i(\Cst_r(S))$ is a Cartan subalgebra of $\Cst_{r}(G)$.

    In Remark~\ref{rmk:icc vs immcentr}, we have shown that Assumption~\eqref{eq:immcentr} implies Assumption~\eqref{eq:icc}. The example here shows that the converse is false: since the two non-trivial elements of the alternating group $\mathfrak{A}_3$ are of order $3$, any non-trivial element of~$S$ is of order $3$, so that $gs^{3}=g=s^{3}g$ for any  $s\in S$ and any $g\in G$ (i.e., the hypothesis in~\eqref{eq:immcentr} is satisfied for $k=3$ and every $g\in G$), despite not every element of $G$ commuting with every element of~$S$ (i.e., the conclusion of~\eqref{eq:immcentr} is not satisfied for $k=3$). In particular,  \cite{DGNRW:Cartan}*{Theorem 3.1} and \cite{BG:2023:Gamma-Cartan-pp}*{Proposition 3.4} do not see that $i(\Cst_{r}(S))$ is a Cartan subalgebra of $\Cst_{r}(G)$.
\end{example}

We point out that, in the case of the trivial twist, 
Condition~\eqref{eq:thm:May17:icc} becomes
  \[
    \Int[G]{\bigl\{
    g\in \Iso{G}: 1< |\set{t\inv g t:t\in S}|<\infty
    \bigr\}} = \emptyset.
  \]
We therefore get the following easy corollary of
Theorem~\ref{thm:May17}.

\begin{corollary}\label{cor:main-thm for trivial twist}
  Let $G$ be a \LCH, \etale\ groupoid.
For an open subgroupoid $S$ of $G$, the following are equivalent.
  \begin{enumerate}[label=\textup{(\roman*)}]
  
    \item\label{it:main-thm for trivial twist:B} $i(\Cst_{r}(S))$ is a Cartan subalgebra of $\Cst_{r}(G)$,
    and the set
    \begin{equation} 
       \set{
        a\in \cs_{r}(G):
        \suppo(j_{G}(a)) \text{ is a bisection}
        } 
        \cap  N(i(\Cst_{r}(S)))
    \end{equation}
    generates $\Cst_{r}(G)$.
      \item\label{it:main-thm for trivial twist:S} $S$ is maximal among abelian open subgroupoids of
  $\Int[G]{\Iso{G}}$, $S$ is closed and normal in $G$, and the set
  \[
    \Bigl\{
    g\in \Iso{G}: 1< |\set{t\inv g t:t\in S}|<\infty
    \Bigr\}
  \]
  has empty interior.
    
  \end{enumerate}
\end{corollary}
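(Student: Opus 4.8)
The plan is to derive Corollary~\ref{cor:main-thm for trivial twist} from Theorem~\ref{thm:May17}, specifically the equivalence \ref{it:thm:May17:B:cap}$\iff$\ref{it:thm:May17:S:Omega+max}, by specializing to the trivial twist $\E=\E_{c}$ with $c\equiv 1$ (Example~\ref{ex:twist given by $2$-cocycle}). First I would record the standard identifications this specialization produces: $\E=G\times\T$ with $\iota$ the inclusion and $\pi$ the projection, and restriction to $G\times\sset{1}$ identifies $C_{0}(G;\E)$ with $C_{0}(G)$; under these, the twisted convolution and involution on $C_{c}(G;\E)$ become the ordinary ones on $C_{c}(G)$, so $\Cst_{r}(G;\E)=\Cst_{r}(G)$ with $j_{G}$ the canonical map $\Cst_{r}(G)\to C_{0}(G)$, and likewise $\E_{S}=S\times\T$ with $\Cst_{r}(S;\E_{S})=\Cst_{r}(S)$. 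Under the same identifications, ``$\pi(\suppo(j_{G}(a)))$ is a bisection'' reads ``$\suppo(j_{G}(a))$ is a bisection'' as in item~\ref{it:main-thm for trivial twist:B}. The one point needing a moment's care is that, the cocycle being trivial, $(g_{1},z_{1})(g_{2},z_{2})=(g_{1}g_{2},z_{1}z_{2})$, so $\E_{S}$ is a bundle of abelian groups exactly when $S$ is (cf.\ Example~\ref{ex-abelian}); hence ``$S$ maximal among open subgroupoids of $\Int[G]{\Iso{G}}$ with $\E_{S}$ abelian'' becomes ``$S$ maximal among \emph{abelian} open subgroupoids of $\Int[G]{\Iso{G}}$'', and the closedness and normality hypotheses carry over verbatim.

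Next I would unwind $\Omega_{S}$, as already previewed in the paragraph before the corollary. Fix $e=(g,z)\in\Iso{\E}$, set $u\coloneq s(g)=r(g)$, and take $\tau=(t,w)\in\E_{S}(u)$ (so $t\in S(u)$). Using triviality of the cocycle and centrality of the $\T$-component, $\tau\inv=(t\inv,\bar w)$ and
\[
\tau\inv e\tau=(t\inv,\bar w)(g,z)(t,w)=(t\inv g t,z),
\]
so $\operatorname{Ad}_{S}(e)=\set{(t\inv g t,z):t\in S(u)}$ and $\pi$ restricts to a bijection of $\operatorname{Ad}_{S}(e)$ onto $\operatorname{Ad}_{S}(\pi(e))=\set{t\inv g t:t\in S(u)}$. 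Therefore $\Omega_{S}(e)=\abs{\set{t\inv g t:t\in S(u)}}$ when this cardinality is finite and $\Omega_{S}(e)=\infty$ otherwise, whence $\Omega_{S}\inv(\Z_{>1})=Y\times\T$ for $Y\coloneq\set{g\in\Iso{G}:1<\abs{\set{t\inv g t:t\in S}}<\infty}$. Since $\E$ carries the product topology, $\Int[\E]{\Omega_{S}\inv(\Z_{>1})}=\Int[G]{Y}\times\T$, which is empty precisely when $Y$ has empty interior; so Condition~\eqref{eq:thm:May17:icc} becomes the last clause of item~\ref{it:main-thm for trivial twist:S}.

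With all of this in place, statement~\ref{it:thm:May17:B:cap} of Theorem~\ref{thm:May17} is, verbatim under the identifications, item~\ref{it:main-thm for trivial twist:B} of the corollary, and statement~\ref{it:thm:May17:S:Omega+max} is item~\ref{it:main-thm for trivial twist:S}; so the two are equivalent by the theorem, and I would finish by citing it. I do not expect a genuine obstacle: the argument is pure translation, and the only places one must be slightly attentive are checking that the trivial central extension contributes nothing to the conjugation sets $\operatorname{Ad}_{S}$ and that abelianness of $\E_{S}$ reduces to abelianness of $S$.
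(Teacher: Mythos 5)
Your proposal is correct and is essentially the paper's own argument: the paper states the corollary immediately after observing that for the trivial twist Condition~\eqref{eq:thm:May17:icc} reduces to the empty-interior condition on $\set{g\in\Iso{G}:1<|\set{t\inv gt:t\in S}|<\infty}$, and then invokes Theorem~\ref{thm:May17}, \ref{it:thm:May17:B:cap}$\iff$\ref{it:thm:May17:S:Omega+max}, exactly as you do. Your computation that $\pi$ is automatically injective on $\operatorname{Ad}_{S}(e)$ for the trivial twist, so that $\Omega_{S}\inv(\Z_{>1})$ is a product set, correctly fills in the detail the paper leaves implicit.
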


\begin{remark}\label{rmk:where does open go}
     The implication \ref{it:main-thm for trivial twist:S}$\implies$\ref{it:main-thm for trivial twist:B} of  Corollary~\ref{cor:main-thm for trivial twist} is a stronger result than applying the main theorem of \cite{DGNRW:Cartan}*{Theorem 3.1}  to a trivial $2$-cocycle. In both theorems, $S$ is assumed to be open, but in \cite{DGNRW:Cartan}*{Theorem 3.1}, the maximality assumption is with respect to all subgroupoids of $\Iso{G}$ (not necessarily open ones and not necessarily in the interior of $\Iso{G}$). See also
    the proof of \cite{BG:2023:Gamma-Cartan-pp}*{Proposition 3.4} for a discussion. In particular, Corollary~\ref{cor:main-thm for trivial twist} recovers a result in \cite{BNRSW:Cartan} about untwisted groupoid $\Cst$-algebras that is not a corollary of \cite{DGNRW:Cartan}*{Theorem 3.1}; see Corollary~\ref{cor:BNRWS}.
\end{remark}

We will now consider the subgroupoid $\Int[G]{\Iso{G}}$ of $G$ (see Remark~\ref{rmk:intG subgpd}). First note that it is always normal. Indeed, suppose $g\in G$ and  $s\in\Int[G]{\Iso{G}}$ are arbitrary and that $s(g)=r(s)$; we must show that $gsg\inv \in \Int[G]{\Iso{G}}$. Let $V\subset G$ and $W\subset \Iso{G}$ be open bisections around $g$ and $s$, respectively. Then $V\inv$ is an open bisection around $g\inv $ and thus 
\[
    VWV\inv =\set{k'\ell k\inv : k',k\in V, \ell\in W, s(k')=r(\ell), s(\ell)=s(k)}
\]
is an open bisection around $g s g\inv $ by \cite{Putnam:Notes}*{Lemma 3.3.1}. Since $V$ is a bisection and $W\subset \Iso{G}$, the set $VWV\inv $ is entirely contained in $\Iso{G}$, which proves that $g s g\inv$ is an interior point of $\Iso{G}$. Thus $gsg\inv \in \Int[G]{\Iso{G}}$, as claimed. We can now deduce the following 
equivalence, the backwards implication of 
which appeared first in 
\cite{BNRSW:Cartan}*{Theorem 4.3, Corollary 4.5}.

\begin{corollary}\label{cor:BNRWS}
    Assume that $G$ is a  \LCH, \etale\ groupoid, and suppose that $\Int[G]{\Iso{G}}$ is abelian. 
    Then
    $i(\Cst_{r}(\Int[G]{\Iso{G}}))$ is a Cartan subalgebra if and only if  $\Int[G]{\Iso{G}}$ is closed in~$G$.
\end{corollary}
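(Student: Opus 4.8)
The plan is to recognize that $S\coloneq\Int[G]{\Iso{G}}$ is exactly the kind of subgroupoid handled by our earlier corollaries, so that both implications reduce to invoking results already proved. First I would collect the structural facts: since $G$ is \etale, $\go$ is open in $G$ and contained in $\Iso{G}$, so $\go\subset S$ and in particular $S\z=\go$; by Lemma~\ref{lem:int is sbgpd}\ref{it:int(H)}, $S$ is an open subgroupoid of $G$; $S$ is normal in $G$ by the discussion immediately preceding the statement; and $S$ is abelian by hypothesis.

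For the ``if'' direction, assume $S$ is closed in $G$, and verify the hypotheses of Corollary~\ref{cor:main-thm for trivial twist}\ref{it:main-thm for trivial twist:S} for this $S$. Since any open subgroupoid of $\Int[G]{\Iso{G}}$ that contains $S=\Int[G]{\Iso{G}}$ must equal $S$, the groupoid $S$ is trivially maximal among abelian open subgroupoids of $\Int[G]{\Iso{G}}$; it is closed and normal in $G$ as just noted; and the set $\set{g\in\Iso{G}:1<|\set{t\inv g t:t\in S}|<\infty}$ has empty interior, because any interior point of it would be an interior point of $\Iso{G}$, hence an element of $\Int[G]{\Iso{G}}=S$, and for $g\in S(u)$ one has $t\inv g t=g$ for all $t\in S(u)$ by abelianness of $S$, forcing $|\set{t\inv g t:t\in S}|=1$ --- a contradiction. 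Corollary~\ref{cor:main-thm for trivial twist}, \ref{it:main-thm for trivial twist:S}$\implies$\ref{it:main-thm for trivial twist:B}, then gives that $i(\Cst_{r}(S))$ is a Cartan subalgebra of $\Cst_{r}(G)$. (One could equally cite Corollary~\ref{cor:formerly thm:main}, since Condition~\eqref{eq:icc} is immediate once $S$ is abelian.)

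For the ``only if'' direction, assume $i(\Cst_{r}(S))$ is a Cartan subalgebra of $\Cst_{r}(G)$. By Definition~\ref{def-cartan} there is a faithful conditional expectation $\Cst_{r}(G)\to i(\Cst_{r}(S))$; since $S\z=\go$, the existence of such a map forces $S$ to be closed in $G$ by \cite{BEFPR:2021:Intermediate}*{Lemma~3.4}, exactly as in the corresponding step of the proof of Corollary~\ref{cor:Breg}.

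I do not anticipate a genuine obstacle: all the technical conditions of the main theorem trivialize for this particular $S$ precisely because $S$ is the \emph{whole} interior of $\Iso{G}$ and is abelian. The only point needing an argument is the empty-interior claim above, which is a one-liner, and the forward direction is essentially immediate from the conditional-expectation criterion for closedness.
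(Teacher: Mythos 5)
Your proposal is correct and follows essentially the same route as the paper: the key observation in both is that abelianness of $S=\Int[G]{\Iso{G}}$ makes every $S$-conjugacy class a singleton, so the exceptional set lies in $\Iso{G}\setminus\Int[G]{\Iso{G}}$ and has empty interior, after which the equivalence ``Cartan $\iff$ closed'' comes from the normality of $S$ together with Corollary~\ref{cor:Breg} and Corollary~\ref{cor:regular if normal} (your separate ``only if'' step via the conditional expectation and \cite{BEFPR:2021:Intermediate}*{Lemma 3.4} is exactly the argument used inside the proof of Corollary~\ref{cor:Breg}). The only cosmetic difference is that you route the ``if'' direction through Corollary~\ref{cor:main-thm for trivial twist} rather than quoting Corollary~\ref{cor:Breg} directly, which changes nothing of substance.
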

\begin{proof}
    Since $S\coloneq \Int[G]{\Iso{G}}$ is abelian, any $g\in \Int[G]{\Iso{G}}$ satisfies
   \[
   \operatorname{Ad}_{S}(g)=\set{t\inv g t:t\in \Int[G]{\Iso{G}}} = \set{g}
   ,\]
   so that
   \[
    X\coloneq
    \Bigl\{
    g\in \Iso{G}: 1< |\operatorname{Ad}_{S}(g)|<\infty
    \Bigr\}
    \subset 
    \Iso{G}\setminus \Int[G]{\Iso{G}}
    .
  \]
  In other words, $\Int[G]{X}=\emptyset$ and so Condition~\eqref{eq:thm:May17:icc} is satisfied. 
  Since $\Int[G]{\Iso{G}}$ is normal in $G$, it 
  follows from 
Corollary~\ref{cor:Breg} (in tandem with Corollary~\ref{cor:regular if normal})
  that it is a Cartan subalgebra if and only if $S=\Int[G]{\Iso{G}}$ is closed.
\end{proof}

We will use this opportunity to mention two more applications to
groupoids twisted by $2$-cocycles
that are already covered by
the theorems in \cite{DGNRW:Cartan}; we  want to include them here
for
convenience, and to fix a gap in {\cite{MFO2022}*{p.\ 2099, Example
    1}}.

\begin{example}\label{ex:rotation}
  On $G=\Z^2$, consider the $2$-cocycle
  $\mathbf{c}_{\theta}\colon G^2\to \T$ defined for a fixed $\theta\in(0,1)$ by
    \[
        \mathbf{c}_{\theta} ((m_{1},m_{2}),(n_{1},n_{2}))=e^{2\pi i
          \theta m_{2}n_{1}}. 
    \]
    Since $G$ is  a discrete abelian group,
    every subgroup is clopen and normal, so that
    Corollary~\ref{cor:Breg} (in tandem with Corollary~\ref{cor:regular if normal} and with Remark~\ref{rmk:cocycle symmetric})
    states that
    a subgroup $S$ of $G$  will give rise to a Cartan subalgebra
      $\Cst_{r}(S,\mathbf{c}_{\theta}\restr{S^2})$ of
      $\Cst_{r}(G,\mathbf{c}_{\theta})$ if and only if 
    $S$ 
    is maximal subject to the condition that
      $c\restr{S^2}$ is symmetric.  Let us describe these
      maximal subgroups.  Note first that every $S\leq G$ can be
    written as $\Z\cdot (k,0)\oplus \Z\cdot (m,n)$, where either
    $k\ge0$ and $m=n=0$, or $k=0$ and $n>0$, or $k>0$, $n>0$ and
    $m\ge0$.
    
  If $\theta\notin \Q$, then $\Z\cdot (k,0)\oplus \Z\cdot (m,n)$ with
  $k,n>0$ does not give rise to a Cartan subalgebra, since its
  $2$-cocycle is not symmetric and hence its \cs-algebra not
  abelian.
  If $kn=0$, then the remaining subgroups are of the
    form, $\Z\cdot (m,n)$, and give rise to a regular abelian
    subalgebra of $\Cst_r(G,\mathbf{c}_{\theta})$.  However, only
    $\Z\cdot (1,0)$, $\Z\cdot (0,1)$, and $\Z\cdot (m,n)$ with
    $\gcd(m,n)=1$ are \emph{maximal} with symmetric $2$-cocycle.
    (The
  latter subgroups were forgotten in \cite{MFO2022}*{p.\ 2099, Example 1}.)

    This leaves 
    the 
    case that
    $\theta = p/q \in \Q$. If $p=0$,
    then $G$ itself is 
    maximal. If $p\neq 0$, we may assume $\gcd(p,q)=1$. 

    \begin{claim*}
      $\Z\cdot (k,0)\oplus \Z\cdot (m,n)$ is maximal among
    subgroups on which the $2$-cocycle  
        $c_{p/q}$ 
    is symmetric if and only if
    $nk=q$. 
    \end{claim*}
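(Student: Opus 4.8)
The plan is to reduce symmetry of the cocycle to a divisibility condition on the index $[\Z^{2}:S]$, after which maximality is elementary lattice combinatorics. First I would record the one-line computation that, for any subgroup $H\leq\Z^{2}$, the restriction $\mathbf{c}_{p/q}|_{H^{2}}$ is symmetric exactly when $(p/q)\,\omega(x,y)\in\Z$ for all $x,y\in H$, where $\omega\bigl((a,b),(c,d)\bigr)=ad-bc$. If $\operatorname{rank}H\leq 1$ this holds automatically; if $\operatorname{rank}H=2$ then $\{\omega(x,y):x,y\in H\}=\Z\cdot[\Z^{2}:H]$, so since $\gcd(p,q)=1$ the condition becomes $q\mid[\Z^{2}:H]$. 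As $[\Z^{2}:\Z(k,0)\oplus\Z(m,n)]=kn$ whenever $k,n\geq 1$, the claim becomes: among subgroups $H$ with symmetric cocycle, $S=\Z(k,0)\oplus\Z(m,n)$ is maximal iff $[\Z^{2}:S]=q$. (Note $q\geq 2$, since $\theta=p/q\in(0,1)$ with $p\neq 0$.) The ``if'' direction is then immediate: if $kn=q$ then $S$ has rank $2$ and index $q$, hence symmetric cocycle; and any $S'\supseteq S$ with symmetric cocycle again has rank $2$ (since $S$ does), so $q\mid[\Z^{2}:S']\mid[\Z^{2}:S]=q$, forcing $[\Z^{2}:S']=q$ and $S'=S$.

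For the ``only if'' direction I would take $S$ maximal (so $\mathbf{c}_{p/q}|_{S^{2}}$ is symmetric) and first dispose of the case $\operatorname{rank}S\leq 1$: choosing a primitive $v_{0}$ with $S\subseteq\Z v_{0}$ and a complement $\Z^{2}=\Z v_{0}\oplus\Z w$, the lattice $\Z v_{0}\oplus\Z(q w)$ strictly contains $S$, has rank $2$ and index $q$, hence symmetric cocycle, contradicting maximality. Thus $S$ has rank $2$; writing it in normal form with $k,n\geq 1$, symmetry gives $q\mid kn$, say $kn=\ell q$ with $\ell\geq 1$. If $\ell\geq 2$, the finite abelian group $\Z^{2}/S$ of order $\ell q$ contains a subgroup of order $\ell$, whose preimage $S'$ satisfies $S\subsetneq S'$ and $[\Z^{2}:S']=q$, so its cocycle is symmetric, again contradicting maximality; hence $\ell=1$, i.e.\ $nk=q$.

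The only substantive step is the opening reduction (symmetry $\iff q\mid[\Z^{2}:H]$); everything afterwards is bookkeeping, the mildly non-obvious ingredients being the existence of a subgroup of each order dividing $|\Z^{2}/S|$ and the separate treatment of the degenerate rank-$\leq 1$ subgroups. I anticipate no real obstacle here --- the one thing requiring care is tracking which subgroups have rank $2$ before invoking the index characterization.
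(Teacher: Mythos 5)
Your proof is correct, and it takes a genuinely different route from the paper's. You first reduce symmetry of $\mathbf{c}_{p/q}$ on a subgroup $H$ to the single condition $q\mid[\Z^{2}:H]$ when $\operatorname{rank}H=2$ (via the observation that the values of the determinant pairing on $H\times H$ form exactly the set $\Z\cdot[\Z^{2}:H]$), with symmetry automatic in rank $\leq 1$; maximality is then pure index arithmetic together with the fact that a finite abelian group has a subgroup of every order dividing its order. The paper instead works entirely in the normal form $\Z\cdot(k,0)\oplus\Z\cdot(m,n)$: it computes that symmetry amounts to $q\mid nk$, writes down the explicit divisibility criterion \eqref{eq:H subgp iff} for containment of such subgroups, and in the non-maximal case $q<nk$ produces a proper symmetric supergroup by a three-step case analysis ($k\nmid q$; then $\gcd(n/n',k)>1$; then a Bezout argument to adjust the off-diagonal entry $m$). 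Your argument buys a shorter, coordinate-free treatment of the ``only if'' direction --- the existence of a subgroup of order $\ell$ in the finite abelian group $\Z^{2}/S$ replaces the paper's Bezout manipulation --- at the cost of invoking the structure theory of finite abelian groups, whereas the paper's proof is longer but entirely explicit, exhibiting concrete supergroups in normal form. Both treat the degenerate rank $\leq 1$ subgroups separately (you by completing a primitive vector to a basis of $\Z^{2}$, the paper by naming explicit index-$q$ supergroups), and both establish the ``if'' direction by the same index squeeze $q\mid[\Z^{2}:S']\mid[\Z^{2}:S]=q$.
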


    \begin{proof}[Proof of Claim.]
      It is easy to check that the $2$-cocycle restricted to
      $\Z\cdot (k,0)\oplus \Z\cdot (m,n)$ is symmetric if and only if
      either $k=0$ or $q\mid nk$. In particular, no subgroup of the
      form $\Z\cdot(m,n)$ 
      with $n >0$
      can give rise to a Cartan
      subalgebra, as it as properly contained in, for example,
      $\Z\cdot(q,0)\oplus \Z\cdot(m,n)$ and hence not
      maximal.
      Similarly, if $k>0$, then $\Z\cdot (k,0)\subset
        \Z\cdot (1,0) \oplus \Z\cdot (0,q)$.
        So let
      us focus on the subgroups $\Z\cdot(k,0)\oplus \Z\cdot(m,n)$ with
      $kn > 0$ and $q\mid nk$.

        Note first that $H=\Z\cdot(k,0)\oplus \Z\cdot(m,n)$ is a subgroup of
        $K=\Z\cdot(k',0)\oplus \Z\cdot(m',n')$ if and only if 
    \begin{align}\label{eq:H subgp iff}
        k'\mid k, 
        \quad 
        n'\mid n,
        \quad
        \text{and}
        \quad
        k'n'\mid (mn' - nm').
    \end{align}
    In particular, if $nk=q$ and $m$ is arbitrary, then $H$ is
    maximal: if $H\leq K$ {\em and} $\mathbf{c}_{p/q}$ is symmetric
    on~$K$, then the first two conditions of~\eqref{eq:H subgp iff}
    imply that $n'=n$ and $k'=k$, and the last condition boils down to
    $k\mid (m-m')$, which is symmetric in $m$ and $m'$, forcing
    $H= K$.

    So assume now that $q\mid nk$ but $q<nk$; we must show that $H$ is not
    maximal.
    
    If $k\mathbin{\nmid } q$, then there exists $k'\mid k$, $k'\neq k$,
    such that $q\mid nk'$, so that $\mathbf{c}_{p/q}$ is symmetric on
    $\Z\cdot(k',0)\oplus \Z\cdot(m,n)$ which, by~\eqref{eq:H subgp iff} and
    since $k\neq k'$, properly contains $H$. We can thus assume that
    $q=kn'$ for some $n'\in\N $; note that $q\mid nk$ then means $n'\mid n$,
    and $q< nk$ means $n'< n$.

    If $\gcd(n/n', k)=l > 1$, then $\Z\cdot(k/l,0)\oplus \Z\cdot(m,n)$ properly
    contains $H$. Since $n'l$ divides $n$ by choice of $l$, we see
    that $q=n'k=(n'l)(k/l)$ divides $n(k/l)$, so that the $2$-cocycle
    is symmetric on this supergroup of $H$, proving that $H$ is not
    maximal. We can thus assume that $\gcd(n/n',k)=1$.
    
    By Bezout's identity, there exist $b,c\in \Z$ with $ (n/n')b +
    kc=1$. Multiplying both sides by $n'm$, we  conclude 
    \[
        n'k \mid  (mn' - n(bm)).
    \]
    Together with $n'\mid n$, this implies by~\eqref{eq:H subgp iff} that
    $H\leq \Z\cdot(k,0)\oplus \Z\cdot(bm,n')$ and that, 
    as $n'\neq n$, this
    containment is proper. As $kn'=q$, the $2$-cocycle is symmetric on
    this supergroup, proving that $H$ is not maximal.
  \end{proof}
  
    To sum up: 
    if $\gcd(p,q)=1$, a subgroup $H=\Z\cdot(k,0)\oplus \Z\cdot(m,n)$ of $\Z^2$ 
    gives rise to a Cartan subalgebra of $\Cst_r(G,\mathbf{c}_{p/q})$ if and only if $q=nk$.
\end{example}

\begin{example}
  The example here is a slight simplification of the one given in
  \cite{DGNRW:Cartan}*{Section~7}.
  Consider the set
  $G=\Z /4\Z \times \Z \times\Z $ with the discrete topology. We make
  it a non-abelian group with $2$-cocycle ${c}$ by defining
  \[
    (x,y,z)\cdot (m,n,k)=(x+m+2zn, y+n, z+k) \quad\text{and}\quad
    {c}((m,n,k),(m',n',k'))=(-1)^{nm'}.
  \]
  We will show that $\Cst_{r}(G,{c})$ does not have a Cartan subalgebra that comes from a subgroupoid.

  The maximal abelian subgroups on which ${c}$ is symmetric (i.e.,
  for which the twist is abelian) are
  \[
    S_{1} \coloneq \Z /4\Z \times 2\Z \times \Z \quad\text{and}\quad
    S_{2} \coloneq 2\Z /4\Z \times \Z \times 2\Z .
  \]
  For trivial reasons, these are clopen and have full unit
  space, and since
\begin{align*}
        (m,n,k)\cdot 
        (x,y,z)\cdot 
        (m,n,k)\inv
    &=
    (x+2(ky-zn), y, z),
\end{align*}
we see that both subgroups are normal and that
\[
  \set{ s\cdot (0,1,1)\cdot s\inv \mid s\in S_{i} } = \set{ (0,1,1),
    (2, 1, 1) } .
\] 
Thus, 
\[
 \Omega_{S_{i}}\bigl((0,1,1)\bigr)=|\operatorname{Ad}_{S_{i}}\bigl((0,1,1)\bigr)|=2,
\]
so
\[
 (0,1,1)\in \Omega_{S_{i}}\inv(\Z_{>1}).
\]
In other words, the two maximal subgroups of~$G$ with abelian twist do
not satisfy Condition~\eqref{eq:thm:May17:icc}. Thus, 
Corollary~\ref{cor:Breg}
yields that $\Cst_{r}(S_{1}) $ and $\Cst_{r}(S_{2}) $ are not Cartan subalgebras. And indeed,
$\delta_{(0,1,1)} + \delta_{(2,1,1)}$ commutes with the
\cs-algebras 
$\Cst_{r}(S_{i})$ while not being contained in them,
proving that $\Cst_{r}(S_{1}) $ and $\Cst_{r}(S_{2}) $ are not maximal abelian.
\end{example}

\section{Open questions}\label{sec:Qs}
During the production of this paper, we stumbled upon the following questions.

\begin{question}[{cf.\ Theorem~\ref{thm:May17}\ref{it:thm:May17:B:cap}}]
    If $B$ is a regular subalgebra of $\Cst_{r}(G;\E)$ and $\mathfrak{U}$ is a topology base for~$G$, under which conditions does
    \[
        \set{
        a\in \cs_r(G;\E):
        \suppo(j_G(a)) \in \mathfrak{U}
        } 
        \cap N(B)
    \]
    generate $A$? 
\end{question}

\begin{question}[{cf.\ Remark~\ref{rmk:G amenable} and \cite{FK:2024:Fourier}*{Theorems 2.2 and 2.3}}]
    For which open subgroupoids $S$ of $G$ is the set
    \[
        \set{
        a\in \cs_r(G;\E)
        :
        \suppo(j_G(a))\subset \E_{S}
        }
    \]
    reduced to just being $i(\cs_r(S;\E_{S}))$?
\end{question}

\begin{question}
    We have seen in Corollary~\ref{cor:S max in terms of Omega} that Condition~\eqref{eq:thm:May17:Int Omega=1} is equivalent to $S$ being maximal among open subgroupoids of $\Iso{G}$ for which $\E_{S}$ is abelian. Is there a similarly intuitive characterization of Condition~\eqref{eq:thm:May17:icc}?
\end{question}

\bibliography{DWZbibfile}

\end{document}